\newtheorem{theorem}[subsection]{Theorem}
\newtheorem{definition}[subsection]{Definition}
\newtheorem{proposition}[subsection]{Proposition}
\newtheorem{lemma}[subsection]{Lemma}
\newtheorem{corollary}[subsection]{Corollary}
\newtheorem{claim}[subsection]{Claim}
\newtheorem{sublemma}[subsection]{Sublemma}
\theoremstyle{definition}
\newtheorem{remark}[subsection]{Remark}
\newtheorem{example}[subsection]{Example}
\newtheorem{examples}[subsection]{Examples}
\numberwithin{equation}{subsection}
\def\limind{\mathop{\oalign{lim\cr
\hidewidth$\longrightarrow$\hidewidth\cr}}}
\def\limproj{\mathop{\oalign{lim\cr
\hidewidth$\longleftarrow$\hidewidth\cr}}}
\def\gg{\mathfrak g}
\def\tt{\mathfrak t}
\def\hh{\mathfrak h}
\def\GG{\mathfrak G}
\def\EE{\mathfrak E}
\def\FF{\mathfrak F}
\def\XX{\mathfrak X}
\def\YY{\mathfrak Y}
\def\HH{\mathfrak H}
\def\PP{\mathfrak P}
\def\TT{\mathfrak T}
\def\SS{\mathfrak S}
\def\CC{\mathfrak C}
\def\LL{\mathfrak L}
\def\UU{\mathfrak U}
\def\VV{\mathfrak V}
\def\WW{\mathfrak W}
\def\ZZ{\mathfrak Z}
\def\MM{\mathfrak M}
\def\NN{\mathfrak N}
\def\QQ{\mathfrak Q}
\def\D{\mathbf D}
\def\G{\mathbf G}
\def\T{\mathbf T}
\def\t{\mathbf t}
\def\mm{\mathfrak m}
\def\nn{\mathfrak n}
\def\ss{\mathfrak s}
\def\kalg{k\text{--}alg}
\def\Ralg{R\text{--}alg}
\def\et{\acute et}
\def\limproj{\mathop{\oalign{lim\cr
\hidewidth$\longleftarrow$\hidewidth\cr}}}
\def\limind{\mathop{\oalign{lim\cr
\hidewidth$\longrightarrow$\hidewidth\cr}}}
\newcommand{\bbZ}{{\mathbb Z}}
\newcommand{\Aut}{\operatorname{Aut}}
\newcommand{\Spec}{\operatorname{Spec}}
\newcommand{\Hom}{\operatorname{Hom}}
\newcommand{\rad}{\operatorname{rad}}
\newcommand{\simlgr}{\buildrel \sim \over \lra}
 \newcommand{\bAut}{\rm \bf Aut}
\newcommand{\wh}{\widehat}
\newcommand{\Gal}{\operatorname{Gal}}
\newcommand{\lra}{\longrightarrow}
\newcommand{\cal}{\mathcal}
\def\ol{\overline}
\def\Z{\mathbb Z}
\newcommand{\bmu}{{\pmb \mu}}
\newcommand{\bs}{{\pmb \sigma}}
\def\bB{\text{\rm \bf B}}
\def\bT{\text{\rm \bf T}}
\def\bQ{\text{\rm \bf Q}}
\def\bH{\text{\rm \bf H}}
\def\bG{\text{\rm \bf G}}
\def\bJ{\text{\rm \bf J}}
\def\dg{\mathbf g}
\def\dh{\mathbf h}
\def\ds{\mathbf s}
\def\dt{\mathbf t}
\def\da{\mathbf a}
\def\dH{\mathbf H}
\def\dG{\mathbf G}
\newcommand{\uG}{{\underline G}}
\newcommand{\uH}{{\underline H}}
\newcommand{\bOut}{{\rm \bf Out}}
\def\bN{\text{\rm \bf N}}
\def\bU{\text{\rm \bf U}}
\def\bL{\text{\rm \bf L}}
\def\bP{\text{\rm \bf P}}
\def\bS{\text{\rm \bf S}}
\def\bM{\text{\rm \bf M}}
\def\cL{\mathcal{L}}
\def\al{\alpha}
\def\lm{\lambda}
\def\dL{\Lambda}
\def\bR{\text{\rm \bf R}}
\begin{document}
\title[Conjugacy theorems]{Conjugacy theorems for loop reductive group 
schemes and Lie algebras}

\author{V. Chernousov}
\address{Department of Mathematics, University of Alberta,
    Edmonton, Alberta T6G 2G1, Canada}
\thanks{ V. Chernousov was partially supported by the Canada Research
Chairs Program
and an NSERC research grant} \email{chernous@math.ualberta.ca}

\author{P. Gille}
\address{UMR 5208 du CNRS -
Institut Camille Jordan - Universit\'e Claude Bernard Lyon 1,
43 boulevard du 11 novembre 1918,
69622 Villeurbanne cedex - France.}
\thanks{P. Gille a b\'en\'efici\'e du soutien du projet ANR Gatho,  
ANR-12-BS01-0005.}\email{gille@math.univ-lyon1.fr}


\author{A. Pianzola}
\address{Department of Mathematics, University of Alberta,
    Edmonton, Alberta T6G 2G1, Canada.
    \newline
 \indent Centro de Altos Estudios en Ciencia Exactas, Avenida de Mayo 866, (1084) Buenos Aires, Argentina.}
\thanks{A. Pianzola wishes to thank NSERC and CONICET for their
continuous support}\email{a.pianzola@math.ualberta.ca}

\begin{abstract}
 \noindent The conjugacy of split Cartan subalgebras in the finite-dimensional simple case 
(Chevalley) and in the symmetrizable Kac-Moody case (Peterson-Kac) are fundamental 
results of  the theory of Lie algebras. Among the Kac-Moody Lie algebras the affine
 algebras stand out. This paper deals with the problem of conjugacy for a class of 
algebras --extended affine Lie algebras-- that are in a precise sense higher 
nullity analogues of the affine algebras. Unlike the methods used by Peterson-Kac,
 our approach is entirely cohomological and geometric. It is deeply rooted on the 
theory of reductive group schemes developed by Demazure and Grothendieck, and on 
the work of Bruhat-Tits on buildings.
 
 The main ingredient of our conjugacy proof is the classification
 of loop torsors over Laurent polynomial rings, a result of its own interest. \\
{\em Keywords:} Loop reductive group scheme, torsor,
Laurent polynomials,
non-abelian cohomology, conjugacy, building.  \\
{\em MSC 2010} 11E72, 17B67, 20G15, 20G35. \\
 
\end{abstract}

\maketitle


\section{Introduction} Let $\dg$ be a split simple finite-dimensional Lie 
algebra over a field $k$ of characteristic $0.$ From the work of Cartan and 
Killing one knows that $\dg$ is determined by its root system. The problem, 
of course, is that a priori the type of the root system may depend on the 
choice of split Cartan subalgebra.  One of the most elegant ways of 
establishing that this does not happen, hence that the type of the root 
system is an invariant of $\dg,$ is the conjugacy theorem of split Cartan 
subalgebras due to Chevalley: all split Cartan subalgebras of $\dg$ are 
conjugate under the adjoint action of $\dG(k)$  where $\dG$ is the split 
simply connected group corresponding to $\dg.$ 

Variations of this theme are to be found on the seminal work of Peterson and 
Kac on conjugacy of ``Cartan subalgebras" for symmetrizable Kac-Moody Lie 
algebras \cite{PK}. Except for the toroidal case, nothing is known about 
conjugacy for extended affine Lie algebras (EALAs for short); a fascinating 
class of algebras which can be thought of as higher nullity analogues of the 
affine algebras. 

The aim of this paper is two-fold. First, to show the existence and conjugacy of what we call Borel-Mostow subalgebras; an important class of 
 ``Cartan subalgebras'' of multiloop algebras (Borel-Mostow subalgebras are rather special. A general conjugacy result fails, as we show in \S 9). As an application of conjugacy we show that the root system attached
to a Lie torus is an invariant (see Theorem~\ref{invariant}). Second, it turns out
that to solve the conjugacy problem we are, out of 
necessity,  faced with the classification problem of loop reductive group schemes over 
a Laurent polynomials ring $R_n=k[t_1^{\pm 1},\ldots,t_n^{\pm 1}]$. Our second 
main result provides us with a  local-global
principle for classification of loop torsors over $R_n$, 
a result that we believe is of its own interest. The case $n=1$ 
was done in our paper \cite{CGP} and here we consider the general case.
For details and precise statements we refer 
to \S\S\ref{Acyclicity1} and \ref{Acyclicity2}.

\smallskip

The philosophy that we follow is motivated by two assumptions:
\smallskip

(1) The affine Kac-Moody and extended affine Lie algebras are among the 
most relevant infinite-dimensional Lie algebras today.

(2) Since the affine and extended affine algebras are closely related to 
finite-dimensional simple Lie algebras, a proof of conjugacy ought to exist that is 
faithful to the spirit of finite-dimensional Lie theory.
\smallskip

That this much is true for toroidal Lie algebras (which correspond to the 
``untwisted case" in this paper) has been shown in \cite{P1}. The present 
work is much more ambitious. Not only it tackles the twisted case, 
but it does so in arbitrary nullity. 

Some of the algebras covered by our result are related to extended affine 
Lie algebras, but our work depicts a more global point of view. 
For every $k$-algebra $R$ which is a normal ring   
it builds  
a bridge between ad-k-diagonalizable subalgebras of twisted forms of semisimple Lie 
algebras over 
$R$ (viewed as infinite-dimensional Lie 
algebras over the base field $k$), and split tori of the corresponding reductive 
group schemes over $R.$ 
Using this natural one-to-one correspondence,  
shown in Theorem \ref{maincorrespondence},
we are able to attach a cohomological obstacle to conjugacy which eventually leads to the proof
of our main conjugacy result in Theorem \ref{conj-loop}. 
The main ingredient of the proof of conjugacy 
is the classification of loop reductive torsors over Laurent polynomial rings 
given by Theorem \ref{secondmaintheorem}. 


\section{Generalities on multiloop algebras and forms}

\subsection{Notation and conventions}\label{conventions}

Throughout this work, with the exception of the Appendix, $k$ will denote a 
field of characteristic
$0$ and $\ol{k}$ an algebraic closure of $k.$ For integers $n\geq 0$ and 
$m>0$  we set
$$
R_n=k[t_1^{\pm 1},\ldots, t_n^{\pm 1}],\ \ K_n=k(t_1,\ldots,t_n), \,\,\ F_n=k((t_1))\cdots((t_n)),
$$
and 
$$
R_{n,m}=k[t_1^{\pm \frac{1}{m}},\ldots, t_n^{\pm \frac{1}{m}}], \ 
K_{n,m}=k(t_1^{\frac{1}{m}},\ldots, t_n^{\frac{1}{m}}), \  
F_{n,m}=k((t_1^{\frac{1}{m}}))\cdots ((t_n^{\frac{1}{m}})).
$$

The category of commutative associate unital algebras over $k$ will be denoted by $\kalg.$  If $\XX$ is a scheme over $\Spec(k),$ by an {\it $\XX$--group} we will understand a group scheme over $\XX.$ When $\XX = \Spec(R)$ for some object $R$ of $\kalg,$ we use the expression {\it $R$--group}.
If $R$ is an object in $\kalg$ we will denote the  corresponding multiplicative and additive groups 
by  $\G_{m,R}$ and  $\bG_{a,R}.$ 

We will use bold roman characters, e.g. $\dG$,  $\dg$ to denote $k$--groups and their Lie algebras. 
The notation $\GG$ and $\gg$ will be reserved for $R$--groups (which are usually not 
obtained from a $k$--group by base change) and their Lie algebras.

\subsection{Forms}\label{forms} 

Let $\dg$ be a finite-dimensional split semisimple Lie algebra over $k.$ Recall  that a Lie algebra $\mathcal{L}$ over $R$ is called a {\it form} of $\dg\otimes_k R$ (or simply a form of  $\dg$)
if there exists a faithfully flat and finitely presented $R$--algebra $\widetilde{R}$ such that
\begin{equation}\label{form}
\mathcal{L}\otimes_R \widetilde{R} \simeq (\dg\otimes_k R)
\otimes_R \widetilde{R} \simeq \dg\otimes_k \widetilde{R},
\end{equation}
where all the above are isomorphisms of Lie algebras over $\widetilde{R}.$ The set of isomorphism classes of such forms is measured
by the pointed set
$$
H^1_{fppf}\big(\Spec(R), {\rm{\bf Aut}}(\dg)_{\text  \rm R}\big)
$$
where ${\rm {\bf Aut}}(\dg)_{\text \rm R}$ is the $R$--group obtained by base
change from the $k$--linear algebraic group $\rm{{\bf Aut}}(\dg)$.
We have a split exact sequence of $k$--groups
\begin{equation}\label{autoseq}
1\longrightarrow \dG_{ad}\longrightarrow
\rm{{\bf Aut}}(\dg) \longrightarrow \rm{\bf Out}
(\dg) \longrightarrow 1
\end{equation}
where $\dG_{ad}$ is the adjoint group
corresponding to $\dg$ and $\rm{\bf Out}
(\dg)$ is the constant $k$--group corresponding
to the finite (abstract) group of symmetries of the Coxeter-Dynkin diagram of
$\dg$. 

By base change we obtain an analogous sequence over $R.$ 
In what follows we will 
denote 
$H^1_{fppf}\big(\Spec(R), {\rm{\bf Aut}}(\dg)_{\text \rm R}\big)
$ simply by $H^1_{fppf}\big(R, \rm{{\bf Aut}}(\dg)\big)
$ when no confusion is possible. Similarly for the Zariski and \'etale topologies, as 
well as for $k$--groups other than $\rm{{\bf Aut}}(\dg).$

\begin{remark} Since $\rm{{\bf Aut}}(\dg)$ is smooth
and affine over $\Spec(R)$
$$
H^1_{\et}\big(R,{\rm{\bf Aut}}(\dg)\big) \simeq
H^1_{\it fppf}\big(R,{\rm{\bf Aut}}(\dg)\big).
$$
\end{remark}

\begin{remark}\label{isoremark} Let  $R = R_{n}$ be as in \S\,\ref{conventions}. 
By the Isotriviality Theorem of \cite{GP2} the trivializing
algebra $\widetilde{R}$ in (\ref{form}) may be taken to be of the form
$$
\widetilde{R} := R_{n,m}\otimes_k \tilde{k}=
\tilde{k}[t_1^{\pm \frac{1}{m}},\ldots, t_n^{\pm \frac{1}{m}}]$$
for some $m$ and some Galois extension $\tilde{k}$ of $k$ containing all  
$m$-th roots of unity of $\ol{k}.$ The extension $\widetilde{R}/R$ is Galois. 
\end{remark}

 \subsection{Multiloop algebras}\label{multiloopalgebras} Assume now that $k$ is algebraically closed. We fix a compatible set of primitive 
$m$--th roots of unity $\xi_m ,$ namely such that  $\xi _{me} ^e = \xi_m
$ for all $e > 0.$ Let $R = R_n$ and $\widetilde{R} =  R_{n,m}$. Then $\widetilde{R}/R$ is Galois. Via our choice of roots of unity, we can identify $\Gal(\widetilde{R}/R)$ with ${(\Z/m\Z )}^n$ as follows: For each
 ${\bf e} = (e_1,\dots ,e_n)\in \Z^n$ the corresponding element $\ol{\bf e} = (\ol e_1,\cdots,\ol e_n) \in \Gal(\widetilde{R}/R)$  acts on $\widetilde{R}$
via $
^{\ol {\bf e}} (t^{\frac{1}{m}}_i) = \xi  ^{e_i}_{m}
t^{\frac{1}{m}}_i.$
\smallskip

The primary example of  forms  $\cL$ of $\dg \otimes_k R$ which are trivialized by a Galois extension $\widetilde{R}/R$ as above are the multiloop algebras based on $\dg.$ These are defined as follows. Consider an
$n$--tuple $\bs = (\sigma  _1,\dots,\sigma  _n)$  of commuting
elements of ${\rm Aut}_k(\dg)$ satisfying $\sigma  ^{m}_i = 1.$ For
each $n$--tuple $(i_1,\dots ,i_n)\in \Z^n$ we consider the
simultaneous eigenspace
$$
\dg_{i_1 \dots i_n} =\{x\in \dg:\sigma  _j(x) = \xi  ^{i_j}_{m} x \,
\, \text{\rm for all} \,\, 1\le j\le n\}.
$$
Then $\dg = \sum \dg_{i_1 \dots i_n}, $ and $\dg = \bigoplus \dg_{i_1 \dots
i_n}$ if we restrict the sum to those $n$--tuples $(i_1,\dots ,i_n)$
for which  $0 \leq i_j < m_j,$ where $m_j$ is the order of $\sigma_j.$

The  {\it multiloop algebra based on $\dg$ corresponding to $\bs$}, commonly denoted by
$L(\dg,\bs),$ is defined by
$$
L(\dg,\bs) = \bigoplus_{{(i_1,\dots ,i_n)\in \Z^n}}
\dg_{i_1\dots i_n} \otimes t_1^{\frac{i_1}{m}} \dots
t^{\frac{i_n}{m}}_n \subset \dg\otimes _k \widetilde{R} \subset \dg \otimes_k \ol{R}_{\infty}
$$

\noindent where $\ol{R}_{\infty} = 
\limind \ol{k}[t_1^{\pm \frac{1}{m}},\ldots, t_n^{\pm 
\frac{1}{m}}]$.\footnote{The ring $\ol{R}_{\infty}$ is a useful artifice that allows us to 
see  {\it all} multiloop algebras based on a given $\dg$ as subalgebras of one Lie algebra.} 
Note that $L(\dg,\bs),$ which does not depend on the choice of common period $m,$ is not 
only a $k$--algebra (in general infinite-dimensional), but also naturally an $R$--algebra. 
A rather simple calculation shows that 
$$ L(\dg,\bs) \otimes _R \widetilde{R} \simeq \dg \otimes _k \widetilde{R} \simeq (\dg\otimes _k
R)\otimes _R \widetilde{R}.$$
\noindent Thus $L(\dg,\bs)$ corresponds to a torsor over $\Spec(R)$ under ${\bAut}(\dg)$.  

It is worth to point out that the cohomological information is always about the twisted forms viewed as algebras over $R$ (and {\it not} $k$). In practice, as the affine Kac-Moody case illustrates, one is interested in understanding  these algebras as objects over $k$  (and {\it not} $R$). We  find  in Theorem \ref{maincorrespondence} a bridge between these two very different and contrasting kinds of mathematical worlds.

\section{Preliminaries I: Reductive  group schemes}\label{prelimI}

\subsection{Some terminology}\label{terminology}


Let $\XX$ be a $k$-scheme. 
A {\it reductive} $\XX$--group is to be understood in the sense of
\cite{SGA3}. In particular, a reductive $k$--group is a reductive
{\it connected}$\,$ algebraic group defined over $k$ in the sense of
Borel. 
We recall now
two fundamental notions about reductive $\XX$--groups.

\begin{definition} Let $\GG$ be a reductive $\XX$--group. We say that $\GG$ is 
{\rm reducible}
if $\GG$ admits a proper parabolic subgroup $\PP$ which has a
Levi subgroup, and {\rm irreducible}
otherwise.
\end{definition}
\begin{definition}
We say that  $\GG$ is {\rm isotropic}
if 
$\GG$ admits a subgroup isomorphic to $\bG_{m,\XX}$. 
Otherwise we say that $\GG$ is
{\rm anisotropic.}
\end{definition}

We denote
 by ${\rm \bf Par}(\GG)$ the $\XX$--scheme of
parabolic subgroup of $\GG$. This scheme
is smooth and projective over $\XX$ \cite[XXVI, 3.5]{SGA3}. Since by definition $\GG$ is a
parabolic subgroup of $\GG,$ when $\XX$ is connected, to say that $\GG$  admits a proper parabolic subgroup is to say
that ${\rm \bf Par}(\GG)(\XX) \not =\{\GG \}.$

\begin{remark}\label{type}
{\rm  If $\XX$ is connected,
to each
parabolic subgroup $\PP$ of $\GG$ corresponds a ``type" ${\bf t} = {\bf
{t}}(\PP)$ which is a subset of the corresponding Coxeter-Dynkin diagram. Given a
type ${\bf t},$ the scheme ${\rm \bf Par}_{{\bf{t}}}(\GG)$ of parabolic
subgroups of $\GG$ of type ${\bf {t}}$ is also smooth and projective over
$\XX$ ({\it ibid.} cor.3.6).
}
\end{remark}

Let $\HH$ denote a reductive $\XX$--group. If $\TT$ is a subgroup of $\HH$ the 
expression ``$\TT$ is a maximal torus
of $\HH $"  has a precise meaning
(\cite[XII, D\'efinition $1.3$]{SGA3}). A maximal torus may or may not be split. 
If it is, we say that $\TT$ is a {\it split maximal torus.} This is in contrast 
with the 
concept of {\it maximal split torus} which we also need. This is  a closed 
subgroup of $\HH$ which is a split torus and which is not properly included in 
any other split torus of $\HH.$ Note that split maximal tori (even maximal tori) 
need not exist, while maximal split tori always do exist if $\XX$ is noetherian. 

If $\SS < \HH$ are $\XX$--groups and $\ss \subset \gg$ are their respective 
Lie algebras
we will denote by $Z_{\HH}(\SS)$ [resp. $Z_{\gg}(\ss)]$ the centralizer of 
$\SS$ in $\HH$ [resp. of $\ss$ in $\gg$]. If $\SS\subset \HH$ is 
a split torus then $Z_{\HH}(\SS)$ is a closed reductive subgroup
(see \cite[XIX, 2.2]{SGA3}). Also, if $\XX$ is connected
and $\TT$ a 
 torus of $\HH$ then $\TT$ contains a 
unique maximal
split subtorus $\TT_d$ (see \cite[XXVI, 6.5, 6.6]{SGA3}). 


We now recall and establish for future reference some basic useful facts.
\begin{lemma}\label{facts} Let $\HH$ be a reductive $\XX$--group and
$\SS\subset \HH$ a split torus.
Then 
there exists a parabolic subgroup $\PP\subset \HH$ such that $Z_{\HH}(\SS)$ is a
Levi subgroup of $\PP$. 
\end{lemma}

\begin{proof}
%
See \cite[XXVI, cor. 6.2]{SGA3}.
\end{proof}


\begin{lemma}\label{wellknown} Let $\SS$ be a split torus of $\HH,$ and 
let $\TT$ be the radical 
of the reductive group 
$\CC=Z_{\HH}(\SS).$\footnote{ Recall that the radical  of a reductive 
$\XX$--group is the  unique maximal torus of its centre \cite[XXII, 4.3.6]{SGA3}.}  
If $\XX$ is connected then $Z_{\HH}(\TT_d)=\CC$.
\end{lemma}\label{precentralizersplit}
\begin{proof}
Since $\TT$ is the centre of $\CC$ we have
$\CC\subset Z_{\HH}(\TT)$. Also, the inclusions
$\SS\subset \TT_d\subset \TT$ yield $$
Z_{\HH}(\TT)\subset Z_{\HH}(\TT_d)\subset Z_{\HH}(\SS)=\CC,
$$
whence the result.
\end{proof}

\begin{proposition}\label{flag} Let $\HH$ be a reductive group
scheme over  $\XX$. Assume $\XX$ is connected. Let $\SS$ be a split subtorus
of $\HH$ and let $\PP$ be a parabolic subgroup of $\HH$ containing 
$Z_\HH(\SS)$ as Levi subgroup. Then following  are equivalent:










\smallskip

{\rm 1)}  The reductive group scheme $Z_\HH(\SS)$ has no proper parabolic subgroups.


{\rm 2)}  $\PP$ is a minimal parabolic subgroup of $\HH$.



\smallskip

\noindent 
If $\SS$ is the maximal split subtorus of the radical of $Z_\HH(\SS)$
these two conditions are equivalent to 

\smallskip

{\rm 3)}
The reductive group scheme $Z_\HH(\SS)/\SS$ is anisotropic.
\end{proposition}


\begin{proof}








According to \cite[XXVI.1.20]{SGA3}, 
there is a bijective 
correspondence
$$
\Bigl\{\hbox{\,parabolics $\QQ$ of $\HH$ included in $\PP$\,} \Bigr\} \enskip  
<--> \enskip
\Bigl\{\hbox{\,parabolics $\MM$ of $Z_\HH(\SS)$}\,\Bigr\}
$$
Thus  the left handside  consists of one element if and only if so does the 
right handside. 
\end{proof}





\begin{proposition}\label{centralizer} Let $\GG$ be 
reductive group scheme over a connected base scheme $
\XX,$ $\SS$ a split subtorus of $\GG, $ and let $\gg$ and $\ss$ denote their respective Lie algebras. 
Then
 
 
 {\rm (1)} ${\rm Lie}\big(Z_{\GG}(\ss)\big) = Z_{\gg}(\ss).$
 
 {\rm (2)} 
$Z_\GG(\SS)$ is a Levi subgroup of $\GG$ and
 $Z_\GG(\SS) = Z_\GG(\ss)$.
 \end{proposition}
 
  \begin{proof}
  
  
  
  (1) This is a particular case of \cite[II th\'eo. 5.3.1(i)]{SGA3}.

(2) That $Z_\GG(\SS)$ is a  Levi subgroups of $\GG$ follows from Lemma \ref{facts}. To establish the equality $Z_\GG(\SS) = Z_\GG(\ss)$ we reason in steps.

(a) Assume ${\it \XX = \Spec(k) \,\, \,{\it  and }\,\,\, \GG \,\,\, simply \,\,connected}$: Then this is a result of Steinberg. See \cite[3.3 and 3.8]{[St75]}
 and  \cite[0.2]{[St75]}. 

(b) Assume ${\it \XX = \Spec(k) \,\, \,{\it and }\,\,\, \GG \,\,\,reductive}$: Embed
$\GG$ into ${\rm \bf SL}_n$ for a suitable $n.$ Then 
$$Z_\GG(\SS) =\GG \cap Z_{{\rm \bf SL_n}}(\SS) \,\, \,{\rm and }\,\,\,
Z_\GG(\ss) =\GG \cap Z_{{\rm \bf SL_n}}(\ss)$$ and we are reduced to the previous case.

(c) In general, 
we proceed by \'etale
descent. This reduces the problem to the case  $\SS \subset \TT
\subset \GG$ where $\GG$ is a Chevalley group and   $\TT$ its standard split
maximal torus. This sequence is obtained by base change
to $\XX$  from a similar sequence over $k$
by \cite[ VII cor. 1.6]{SGA3}. Over $k$ our equality holds.  Since both centralizers commute with base change the equality follows. 
\end{proof}

\section{Loop torsors and loop reductive group schemes}

Throughout this section $\XX$ will denote a connected and noetherian scheme 
over  $k$ and $\dG$  a $k$--group which is locally of finite 
presentation.\footnote{The case most relevant to our work is that of the 
group of automorphism of a reductive $k$--group.}

\subsection{The algebraic fundamental group}

If $\XX$ is a $k$--scheme and if
$a$ of is a geometric point of $\XX$ i.e. a morphism $a:\;\Spec(\Omega) \to \XX$
where $\Omega   $ is an algebraically closed field, we
denote the algebraic fundamental group of $\XX$ at $a$ by $\pi_1(\XX,a)$ (see \cite{SGA1} for details).

Suppose now that our $\XX$ is a geometrically connected $k$--scheme.
We will denote
$\XX \times_k {\ol k}$ by $\ol{\XX}.$  Fix a  geometric point 
$\ol a :\Spec(\ol k) \to \ol \XX.$ Let $a$ (resp. $b$) be the geometric point
of $\XX$ (resp. $\Spec(k)$)  given by the composite maps
 $a : \Spec(\ol k) \buildrel \ol a \over \to  \ol{\XX} \to \XX$ 
(resp. $b : \Spec(\ol k)
\buildrel \ol a \over \to   \XX \to \Spec(k)).$ Then by
\cite[th\'eo. IX.6.1]{SGA1} $\pi_1\big(\Spec(k), b\big) 
\simeq \Gal(k) := \Gal(\ol{k}/k)$
and the sequence
\begin{equation}\label{fundamentalexact}
1 \to \pi_1(\ol{\XX}, \ol a) \to  \pi_1(\XX,  a) \to
\Gal(k) \to 1
\end{equation}
is exact.

\subsection{The algebraic fundamental group of $R_n$}\label{aflaurent}

We refer the reader to \cite{GP2} and \cite{GP3} for details. 
The simply connected cover $\XX^{sc}$ of $\XX = \;\Spec\, (R_n)$ is $\Spec (\ol R_{n,\infty  })$where
$$ \ol R_{n,\infty  } = {\limind} \; \ol R_{n,m }$$ with $\ol R_{n,m  } = 
\ol k[t^{\pm
\frac{1}{m}}_1,\dots,t^{\pm \frac{1}{m}}_n].$ The ``evaluation at 1" 
provides a geometric point that we denote by $a.$ The algebraic 
fundamental group is best described as
\begin{equation}\label{FGLaurent}
\pi_1(\XX,a) =  \wh{\Z}(1)^n \rtimes
\;\Gal \,(k).
\end{equation}
\noindent where $\wh{\Z}(1)$ denotes the abstract group 
$ \limproj_m \,  \bmu_m(\ol k)$ equipped with the natural action of the 
absolute Galois group $\Gal(k).$

\subsection{Loop torsors}\label{looptorsors} 
Because of the universal nature of $\XX^{sc}$ we have a natural  group 
homomorphism
\begin{equation}
\label{Eq3} \bG(\ol k) {\longrightarrow} \bG(\XX^{sc}).
\end{equation}

The group $\pi_1(\XX,a)$ acts on $\ol{k},$ hence on $\bG(\ol{k}),$ via the
group homomorphism $\pi_1(\XX,a)\to\,\Gal \,(k)$ of
(\ref{fundamentalexact}). This action is continuous, and together
with (\ref{Eq3}) yields a map
$$
H^1\big(\pi_1(\XX,a),\bG(\ol{k})\big) \to
H^1\big(\pi_1(\XX,a),\bG(\XX^{sc})\big),
$$
where we remind the reader that these $H^1$ are defined in the 
``continuous" sense. 
On the other hand, by \cite[prop. 2.3]{GP3}  and basic
properties of torsors trivialized by Galois extensions we have a natural inclusion 
$$
\begin{aligned}
H^1\big(\pi_1(\XX,a),\bG(\XX^{sc})\big)
\subset H_{\et}^1(\XX,\bG).
\end{aligned}
$$

By means of the foregoing observations we make the following.
\begin{definition} \label{looptorsor} A torsor $\EE$
over $\XX$ under $\bG$ is called a {\rm loop torsor} if its
isomorphism class $[\EE]$ in $H_{\et}^1(\XX,\bG)$ belongs to the image of the
composite map
$$
H^1\big(\pi_1(\XX,a),\bG(\ol{k})\big) \to
H^1\big(\pi_1(\XX,a),\bG(\XX^{sc})\big)\subset H_{\et}^1(\XX,\bG).
$$
\end{definition}
\medskip

We will denote by $H^1_{loop}(\XX, \bG) $ the subset of
$H_{\et}^1(\XX, \bG)$ consisting of classes of loop torsors. 
They are given by (continuous) cocycles in the image of the natural 
map $Z^1\big(\pi_1(\XX,a),\bG(\ol{k})\big)  \to Z_{\et}^1(\XX,\bG),$ 
which  we call {\it loop cocycles.}

This fundamental concept is used in the definition of loop reductive groups 
which we will recall momentarily. 
The following examples  illustrate the immensely rich class of objects that 
fit within the language of loop torsors.

\begin{examples}\label{kloopexamples} (a) If $\XX =\,\Spec\,(k)$ then  
$H^1_{loop}(\XX, \bG)$ is nothing but the usual Galois
cohomology of $k$
with coefficients in $\bG.$

\vskip.2cm
(b) Assume that  $k$ is algebraically closed. Then the action of 
$\pi_1(\XX,a)$ on
$\bG(\ol{k})$ is trivial, so that
$$
H^1\big(\pi_1(\XX,a),\bG(\ol{k})\big)
=\;\Hom\big(\pi_1(\XX,a),\bG(\ol{k})\big)/\text{\rm Int}\, \bG(\ol{k})
$$
where the group $\text{\rm Int}\;\bG(\ol{k})$ of inner automorphisms of
$\bG(\ol{k})$ acts naturally on the right on 
$\Hom\big(\pi_1(\XX,a),\bG(\ol{k})\big).$ 
Two particular cases are important:

\vskip.2cm (b1) $\bG$ abelian: In this case
$H^1\big(\pi_1(\XX,a),\bG(\ol{k})\big)$ is just the group of
continuous homomorphisms from $\pi_1(\XX,a)$ to $\bG(\ol{k}).$

\vskip.2cm (b2) $\pi_1(\XX,a) =\widehat{\Z}(1)^n:$ In this case
$H^1\big(\pi_1(\XX,a),\bG(\ol{k})\big)$ is the set of conjugacy
classes of $n$--tuples $\bs = (\sigma_1,\dots,\sigma_n)$ of commuting
elements of finite order of $\bG(\ol{k}).$\footnote{ That the elements 
are of finite order follows from the continuity assumption.}

This last example is exactly the setup of multiloop algebras, and the 
motivation for the ``loop torsor"
terminology.
\end{examples}

\subsection{Geometric and arithmetic part of a loop cocycle}

By means of the decompositions (\ref{fundamentalexact}) and (\ref{FGLaurent}) we can think
of loop cocycles as being comprised of a geometric and an arithmetic
part, as we now explain. 

Let $\eta \in  Z^1\big(\pi_1(\XX,a), \bG(\ol{k})\big).$ The restriction
$\eta_{\mid \Gal(k)}$ is called the {\it arithmetic part} of $\eta$
and it is  denoted by $\eta^{ar}.$
  It is easily seen that $\eta^{ar}$ is in fact a cocycle in $Z^1\big(\Gal(k), \bG(\ol{k})\big)$. If $\eta$ is fixed in our discussion,
   we will at times denote  the cocycle $\eta^{ar}$ by the more
  traditional notation $z.$ In particular, for $s \in \Gal(k)$
 we write $z_s$ instead   of $\eta^{ar}_s.$ 

   Next we consider the restriction of $\eta$ to
  $\pi_1( \ol{\XX} ,\ol a)$ that we denote by $\eta^{geo}$
and called the {\it geometric part} of $\eta.$
We thus have a  map 
$$
\begin{CD}
\Theta \, \, : \, \,   & Z^1 \big(\pi_1(\XX,a), \bG(\ol{k})\big) @>>> Z^1\big(
\Gal(k), \bG(\ol{k})\big) \times
\Hom\big( \pi_1(\ol{\XX} ,\ol a), \bG(\ol{k})\big) \\ 
&\eta & \mapsto & \bigl( \quad \eta^{ar} \quad , \quad \eta^{geo} \quad \bigr)
\end{CD}
$$ 

The group $\Gal(k)$ acts on $\pi_1(\ol{\XX} ,\ol a)$ by conjugation. On
$\bG(\ol{k}),$ the Galois group $\Gal(k)$ acts on two different ways. 
There is the natural action arising from the action of $\Gal(k)$ on
$\ol{k}$, and there is also
the twisted action given by the cocycle $\eta^{ar} = z$.
Following  standard practice to view the abstract group
$\bG(\ol{k})$ as a $\Gal(k)$--module with the twisted action by $z$ we
write $_z{\bG(\ol{k})}.$

\begin{lemma}\label{Theta} The  map $\Theta$ described above yields  a 
bijection between
\newline $Z^1\big(\pi_1(\XX,a), \bG(\ol{k})\big)$ and couples 
$( z, \eta^{geo})$ with $z
\in  Z^1\big(
\Gal(k), \bG(\ol{k})\big)$ and 
$\eta^{geo} \in \Hom_{\Gal(k)}\big(
\pi_1(\overline{\XX}, \overline{a}), {_z\bG}(\ol{k})\big)$.
\end{lemma}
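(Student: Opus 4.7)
The plan is to exploit the exact sequence
\[
1 \to \pi_1(\ol{\XX},\ol{a}) \to \pi_1(\XX,a) \xrightarrow{p} \Gal(k) \to 1
\]
of (\ref{fundamentalexact}). In our main case of interest ($\XX = \Spec(R_n)$ via (\ref{FGLaurent})) this sequence is canonically split, and in general one fixes a set-theoretic section $s:\Gal(k)\to \pi_1(\XX,a)$; the formulas below are independent of that choice up to an obvious coboundary adjustment. Set $\Pi = \pi_1(\XX,a)$, $\pi = \pi_1(\ol{\XX},\ol{a})$, $\Gamma = \Gal(k)$, and recall that $\Pi$ acts on $\bG(\ol{k})$ through $p$, so in particular $\pi$ acts trivially.

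First I would verify that $\Theta$ is well defined. That $\eta^{ar} = \eta\circ s$ lies in $Z^1(\Gamma,\bG(\ol k))$ is immediate from the cocycle identity (and, in the non-split case, independent of $s$ up to replacing $\eta$ by a cohomologous cocycle). That $\eta^{geo}:=\eta|_{\pi}$ is a \emph{homomorphism} is equally immediate: for $g_1,g_2\in\pi$ the cocycle identity reads $\eta(g_1g_2)=\eta(g_1)\,{}^{g_1}\eta(g_2)=\eta(g_1)\eta(g_2)$, because $g_1$ acts trivially on $\bG(\ol k)$. The content is the $\Gamma$-equivariance with respect to the twisted action. For $\sigma\in\Gamma$, $g\in\pi$, write the conjugation $\sigma\cdot g = s(\sigma)\, g\, s(\sigma)^{-1}$ in $\Pi$ and apply the cocycle identity twice to $s(\sigma)\cdot g\cdot s(\sigma)^{-1}$; using the identity ${}^{\sigma}\eta(s(\sigma)^{-1}) = z_\sigma^{-1}$ extracted from $\eta(e)=1$, the telescoping yields
\[
\eta^{geo}(\sigma\cdot g) \;=\; z_\sigma\cdot {}^\sigma\eta^{geo}(g)\cdot z_\sigma^{-1},
\]
which is exactly the condition $\eta^{geo}\in \Hom_{\Gamma}(\pi,{}_z\bG(\ol k))$.

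Next I would construct the inverse. Given $(z,\phi)$ with $z\in Z^1(\Gamma,\bG(\ol k))$ and $\phi\in \Hom_{\Gamma}(\pi,{}_z\bG(\ol k))$, define
\[
\eta(g\cdot s(\sigma)) \;:=\; \phi(g)\, z_\sigma, \qquad g\in\pi,\ \sigma\in\Gamma.
\]
Every element of $\Pi$ has a unique such factorisation, so $\eta$ is well defined as a map. The cocycle identity for $\eta$ on a product $(g_1 s(\sigma_1))(g_2 s(\sigma_2)) = g_1\,(\sigma_1\cdot g_2)\, s(\sigma_1\sigma_2)$ follows by substituting the definition, then using in turn the homomorphism property of $\phi$, the $\Gamma$-equivariance $\phi({}^{\sigma_1}g_2) = z_{\sigma_1}\cdot {}^{\sigma_1}\phi(g_2)\cdot z_{\sigma_1}^{-1}$, and the cocycle relation $z_{\sigma_1\sigma_2} = z_{\sigma_1}\cdot {}^{\sigma_1}z_{\sigma_2}$. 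Regrouping gives $\eta(g_1 s(\sigma_1))\cdot {}^{\sigma_1}\eta(g_2 s(\sigma_2))$, which equals the required ${}^{g_1 s(\sigma_1)}$-twist since $g_1$ acts trivially.

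Finally I would observe that the two constructions are visibly mutually inverse: restricting the $\eta$ produced from $(z,\phi)$ to $\pi$ recovers $\phi$, restricting to $s(\Gamma)$ recovers $z$, and conversely the factorisation $g\cdot s(\sigma)$ of an element of $\Pi$ forces any cocycle to satisfy $\eta(g\cdot s(\sigma)) = \eta(g)\cdot{}^{g}\eta(s(\sigma)) = \eta^{geo}(g)\cdot \eta^{ar}(\sigma)$. The only mildly delicate point—and the one I expect to require the most care—is the $\Gamma$-equivariance check in the forward direction, because it is there that the twisted action $_z\bG(\ol k)$ enters and one has to use the cocycle identity non-trivially through the auxiliary relation ${}^\sigma\eta(s(\sigma)^{-1})=z_\sigma^{-1}$; once this is in hand, the rest is bookkeeping.
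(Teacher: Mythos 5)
Your argument is correct and supplies the explicit verification that the paper delegates to \cite{GP3}, Lemma~3.7; the $\Gal(k)$-equivariance of $\eta^{geo}$ (via the auxiliary relation ${}^\sigma\eta(s(\sigma)^{-1})=z_\sigma^{-1}$), the cocycle check for the inverse construction, and the mutual-inverse observations are all carried out correctly.

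One point needs to be corrected. You cannot get away with a mere set-theoretic section $s$ of (\ref{fundamentalexact}): the inverse construction uses $s(\sigma_1)s(\sigma_2)=s(\sigma_1\sigma_2)$ when regrouping $(g_1 s(\sigma_1))(g_2 s(\sigma_2))$, and in the forward direction $\eta^{ar}=\eta\circ s$ fails the cocycle identity when $s$ is not multiplicative --- one finds $\eta(s(\sigma_1\sigma_2))=\eta^{geo}\bigl(c(\sigma_1,\sigma_2)\bigr)^{-1}\cdot\eta^{ar}(\sigma_1)\cdot{}^{\sigma_1}\eta^{ar}(\sigma_2)$, where $c$ is the factor set of $s$, and the extra factor does not disappear. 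The lemma therefore genuinely presupposes a group-theoretic splitting of (\ref{fundamentalexact}); indeed, the paper's own phrase ``the restriction $\eta_{\mid\Gal(k)}$'' already treats $\Gal(k)$ as a subgroup of $\pi_1(\XX,a)$. This is available in the situation of interest because (\ref{FGLaurent}) exhibits $\pi_1(\XX,a)=\wh{\Z}(1)^n\rtimes\Gal(k)$, the splitting being pinned down by the chosen compatible system of roots of unity. Finally, the remark that the formulas are ``independent of that choice up to an obvious coboundary adjustment'' should be dropped: replacing a splitting $s$ by $s'=c\cdot s$ with $c:\Gal(k)\to\pi_1(\ol{\XX},\ol a)$ replaces $\eta^{ar}$ by $\bigl(\eta^{geo}\circ c\bigr)\cdot\eta^{ar}$, which is not a coboundary modification of cocycles in general; the bijection $\Theta$ genuinely depends on the splitting, which the paper fixes once and for all.
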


\begin{proof} See  \cite[lemma 3.7]{GP3}.
\end{proof}

\begin{remark}\label{easy} Assume that $\XX = \Spec(R_n).$ It is easy to 
verify that $\eta^{geo}$ arises from a unique $k$--group homomorphism
$$
_{\infty}\bmu =  \Bigl( \limproj \bmu_m\Bigr)^n \to  {_z\bG}
$$
\end{remark}

We finish this section by recalling some basic properties of the
twisting bijection (or torsion map) $\tau_z : H^1(\XX, {_z}\bG) \to H^1(\XX, 
\bG)$. 
Take a cocycle $\eta \in Z^1\big(\pi_1(\XX,a), \bG(\ol{k})\big)$ and
consider its corresponding pair $\Theta(\eta) = (z, \eta^{geo}).$ We
can apply the same construction to the twisted $k$--group ${_z\bG}.$
This would lead to a map $\Theta_z$ that will attach to a cocycle
$\eta' \in Z^1\big(\pi_1(\XX,a), {_z\bG}(\ol{k})\big)$ a pair 
$(z', {\eta'}^{\rm
geo})$ along the lines explained above.

\begin{lemma} \label{looptwist} Let $\eta \in Z^1\big(\pi_1(\XX,a), 
\bG(\ol{k})\big)$. With the above notation, the inverse of the twisting
map {\rm \cite{Se1}}
$$
\tau_z^{-1}:   Z^1\big(\pi_1(\XX,a), \bG(\ol{k})\big) \simlgr 
Z^1\big(\pi_1(\XX,a), {
_z\bG}(\ol{k})\big)
$$
satisfies $\Theta_z \circ \tau_z^{-1}(\eta) = (1, \eta^{\rm geo})$. \qed
\end{lemma}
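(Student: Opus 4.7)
The plan is to unwind both the twisting map $\tau_z^{-1}$ and the decomposition map $\Theta_z$ in terms of explicit cocycle formulas, and then to verify the equality by restricting to the two halves of the exact sequence (\ref{fundamentalexact}).

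First I would recall the explicit formula for the twisting map of Serre. Here $z$ is viewed as a cocycle of $\pi_1(\XX,a)$ pulled back from $\Gal(k)$ along the projection of (\ref{fundamentalexact}), so $z_\gamma$ depends only on the image $\overline{\gamma}$ of $\gamma$ in $\Gal(k)$; in particular $z_\gamma = 1$ whenever $\gamma \in \pi_1(\overline{\XX}, \overline{a})$. With the convention from Serre's \emph{Cohomologie Galoisienne} (I.5.3), the bijection $\tau_z: Z^1\bigl(\pi_1(\XX,a), {_z\bG(\ol k)}\bigr) \to Z^1\bigl(\pi_1(\XX,a), \bG(\ol k)\bigr)$ is given by $\tau_z(\eta')_\gamma = \eta'_\gamma \, z_\gamma$; equivalently, $\tau_z^{-1}(\eta)_\gamma = \eta_\gamma \, z_\gamma^{-1}$. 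A direct check of the cocycle condition, using that the action of $\Gal(k)$ on $_z\bG(\ol k)$ is $\gamma \cdot_z g = z_\gamma\, \gamma(g)\, z_\gamma^{-1}$, confirms that this formula lands in $Z^1\bigl(\pi_1(\XX,a), {_z\bG(\ol k)}\bigr)$ (this is exactly Serre's computation).

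Next I would compute the two components of $\Theta_z\bigl(\tau_z^{-1}(\eta)\bigr)$ separately, using a splitting of (\ref{fundamentalexact}) (such a splitting always exists in the case of central interest $\XX=\Spec(R_n)$ by (\ref{FGLaurent}), and more generally $\Theta$ is defined relative to a chosen section, as in Lemma \ref{Theta}). For the arithmetic part, restrict to $\Gal(k) \hookrightarrow \pi_1(\XX,a)$: for such a $\gamma$ we have $\eta_\gamma = \eta^{ar}_\gamma = z_\gamma$, hence
\[
\tau_z^{-1}(\eta)_\gamma \;=\; \eta_\gamma \, z_\gamma^{-1} \;=\; z_\gamma\, z_\gamma^{-1} \;=\; 1.
\]
So the arithmetic part of $\tau_z^{-1}(\eta)$ is the trivial cocycle. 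For the geometric part, restrict to $\gamma \in \pi_1(\overline{\XX}, \overline{a})$: then $\overline{\gamma} = 1$ in $\Gal(k)$, so $z_\gamma = 1$ and
\[
\tau_z^{-1}(\eta)_\gamma \;=\; \eta_\gamma \, z_\gamma^{-1} \;=\; \eta_\gamma \;=\; \eta^{\mathrm{geo}}_\gamma.
\]
Combining the two computations gives $\Theta_z\bigl(\tau_z^{-1}(\eta)\bigr) = (1, \eta^{\mathrm{geo}})$, as desired.

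The only real subtlety—and thus the main point that needs care rather than any deep obstacle—is bookkeeping of conventions: whether $\tau_z^{-1}(\eta)_\gamma$ equals $\eta_\gamma z_\gamma^{-1}$ or $z_\gamma^{-1}\eta_\gamma$, and the matching left/right convention in the twisted Galois action on $_z\bG(\ol k)$. Once one adheres consistently to Serre's convention (as verified by the cocycle-condition check above), the identity $\Theta_z \circ \tau_z^{-1}(\eta) = (1, \eta^{\mathrm{geo}})$ follows from the two one-line computations. No further ingredient is required.
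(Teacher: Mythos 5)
Your proof is correct, and the paper in fact offers no argument at all (the lemma is stated with a \qed, leaving it as an immediate consequence of Serre's conventions). Your unwinding of the explicit twisting formula $\tau_z^{-1}(\eta)_\gamma = \eta_\gamma z_\gamma^{-1}$, checking separately on $\Gal(k)$ and on $\pi_1(\ol\XX,\ol a)$ via the splitting of (\ref{fundamentalexact}), is precisely the intended verification.
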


\begin{remark}\label{klooptorsion} The notion of loop torsor behaves well  
under twisting by a Galois cocycle  
$z \in Z^1\big( \Gal(k), \bG(\ol{k})\big).$  Indeed   the torsion map $
\tau_z^{-1}: H_{\et}^1(\XX, \bG) \to
H_{et}^1(\XX, {_z\bG})
$
maps loop classes to loop classes. 
\end{remark}

\subsection{Loop reductive groups} \label{seckloopgroup}
Let $\HH$ be a reductive group scheme over $\XX.$  Since $\XX$ is connected, 
for all $x \in \XX$ the geometric fibers $\HH_{\ol x}$ are reductive 
group schemes of the same ``type"  \cite[XXII, 2.3]{SGA3}.  
By Demazure's  theorem there exists  a unique split reductive group 
$\bH$ over $k$ such that $\HH$ is a twisted form (in the \'etale topology of 
$\XX$) of $\HH_0 = \bH \times_k \XX.$ We will call $\bH$ the 
{\it Chevalley $k$--form of} $\HH.$ The $\XX$--group $\HH$ corresponds 
to a torsor $\EE$ over $\XX$ under the group scheme ${\bAut}(\HH_0),$ 
namely $\EE = {\rm \bf Isom}_{gr}(\HH_0, \HH).$  
We recall that $\bAut(\HH_0)$ is representable by a smooth and separated 
group scheme over $\XX$ by \cite[XXII, 2.3]{SGA3}. It is well-known that 
$\HH$ is then the  contracted product  $\EE \wedge^{\bAut(\HH_0)} \HH_0$ 
(see \cite{DG} III \S 4 n$^{\rm o}$3 for details). 

We now recall one of the central concepts needed for our work.

\begin{definition} \label{defkloopgroup}
We say that a group scheme $\HH$ over $\XX$ is  {\rm loop reductive}
if it is reductive  and if $\EE$  is
a loop torsor.
\end{definition}

\section{Preliminaries II: Reductive group schemes over a normal noetherian base}\label{prelimIII}

We begin with a useful variation of  Lemma \ref{wellknown} under some extra assumptions on our connected base $k$--scheme $\XX.$ 

\begin{lemma}\label{normalsplit} Assume that $\XX$ is normal noetherian and integral. Let $\HH$ be a reductive $\XX$--group. Then there exists an \'etale cover $(\UU_i)_{i=1,..,l} \to \XX$
such that :

\smallskip

(i) $\HH \times_\XX \UU_i$ is a split reductive $\UU_i$--group scheme, 


(ii) $\UU_i=\Spec(R_i)$ with $R_i$  a normal noetherian domain. 

(iii) If $\HH$ is a torus and $\XX = \Spec(R)$ there exists a Galois extension $\widetilde{R}/R$ that splits $\HH.$
 
\end{lemma}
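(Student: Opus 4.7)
The plan is to combine two standard ingredients: Demazure's theorem on the étale-local splitting of reductive group schemes, and the fact that étale morphisms behave nicely over a normal noetherian base.

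First, by Demazure's theorem \cite[XXII, 2.3]{SGA3}, the reductive $\XX$--group $\HH$ becomes split after some surjective étale base change, that is, there exists a surjective étale morphism $\VV \to \XX$ with $\HH \times_{\XX} \VV$ a split reductive $\VV$--group. This already gives condition (i), but provides no control over the nature of $\VV$, so condition (ii) still has to be arranged.

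Next, I would refine the cover. Since $\XX$ is noetherian, hence quasi-compact, choose a finite affine open cover $\XX = \bigcup_{i=1}^{N} \Spec(A_i)$. As $\XX$ is normal and integral, each $A_i$ is automatically a normal noetherian domain. Pulling $\VV \to \XX$ back to $\Spec(A_i)$ gives a surjective étale morphism $\VV \times_{\XX} \Spec(A_i) \to \Spec(A_i)$; by quasi-compactness of $\Spec(A_i)$ this can be refined to a finite affine cover by spectra $\Spec(B_{i,j})$, where $B_{i,j}$ is étale of finite presentation, hence of finite type, over $A_i$. Consequently $B_{i,j}$ is noetherian, and it is normal because étale morphisms preserve normality of the base \cite[17.5.7]{EGA4}.

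Finally, since $\Spec(B_{i,j})$ is noetherian and normal, it has only finitely many connected components, each of which is integral. Therefore $B_{i,j}$ decomposes as a finite product $\prod_{l} R_{i,j,l}$ of normal noetherian domains, and each $\Spec(R_{i,j,l}) \to \Spec(A_i) \hookrightarrow \XX$ is étale. The collection $\{\Spec(R_{i,j,l})\}_{i,j,l}$ is then a finite étale cover of $\XX$ meeting both (i) and (ii).

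The main obstacle is conceptually very mild: the nontrivial input is purely Demazure's theorem; the rest is bookkeeping, and the only point where the hypothesis that $\XX$ is normal integral noetherian is genuinely used is in passing from an arbitrary affine étale $A_i$--algebra $B_{i,j}$ to a finite product of normal noetherian domains via the connected-components decomposition.
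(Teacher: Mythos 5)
Your argument is correct for the statement as written, but it takes a genuinely different and less powerful route than the paper's. You invoke only Demazure's theorem that $\HH$ is split \'etale-locally (\cite[XXII, 2.3]{SGA3}), pull back to a finite affine Zariski cover of $\XX$, refine to affine \'etale covers, and conclude (ii) from the facts that an \'etale morphism of finite type over a normal noetherian ring again has normal noetherian source, and that a normal noetherian scheme is a finite disjoint union of integral normal noetherian schemes. The paper instead invokes local isotriviality over a normal noetherian base (\cite[XXIV.4.1.6]{SGA3}) --- that $\HH$ is split Zariski-locally on $\XX$ by a \emph{finite} \'etale cover --- and then uses the Lang--Serre description of finite \'etale covers of a normal integral scheme (\cite[18.10.12]{EGA4}) to identify a connected component as $\Spec$ of a normal domain which is finite, hence noetherian, over the base.

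The distinction matters downstream. Remark \ref{localsplit} and Lemma \ref{maxtor} apply Lemma \ref{normalsplit} to produce, over a local $R$, a single \emph{finite} \'etale (and then, after a Galois closure, Galois) splitting extension $\widetilde R/R$; Lemma \ref{upperbound} is explicit that the extension must be finite \'etale. Your covers $\Spec(R_{i,j,l})\to\Spec(A_i)$ are only \'etale of finite type and need not be finite (an open immersion already shows the gap), so while your proof discharges the lemma as literally stated, it does not deliver the finite \'etale covers that the sequel quietly relies on. That extra finiteness is exactly what local isotriviality buys over mere \'etale-local splitness, and is why the paper reaches for the stronger input.
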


\begin{proof} Since $\XX$ is normal noetherian, $\HH$ is a locally isotrivial 
group scheme \cite[XXIV.4.1.6]{SGA3}. We can thus cover  $\XX$
by affine Zariski open subsets $\XX_1, \dots, \XX_l$ where $\XX_i = \Spec(A_i)$ and such that there exists
 a finite \'etale cover $\VV_i \to \XX_i$ for $i=1,..,l$ which splits $\HH_{\XX_i}$. 
For each $i$, choose   a connected component $\UU_i$ of $\VV_i$.
 According to the classification of \'etale maps over $\XX$ 
(see \cite[18.10.12]{EGAIV}) 
we know
that $\UU_i$ is a finite \'etale cover of $\XX_i$ and that 
$\UU_i=\Spec(R_i)$  where $R_i$ is a normal domain.
Since $R_i$ is finite over the noetherian ring  $A_i$,
it is noetherian as well.

(iii) By \cite[X, th\'eo. 5.16]{SGA3} there exists a finite \'etale extension of $\XX$ that 
splits $\HH$. The result now follows by considering a connected component of this extension and basic properties of the algebraic fundamental group (see \cite[5.3.9]{Sz}).
\end{proof}

 \begin{remark}\label{localsplit} If $\XX$ is local, one single $\UU_i$ 
suffices. 
 \end{remark}

\begin{proposition}\label{centralizersplit} 
Let $\XX$ be 
normal and noetherian. Let $\HH$ be a reductive $\XX$--group,  $\PP\subset \HH$ 
be a parabolic
subgroup and $\LL\subset \PP$ a Levi subgroup.\footnote{The  existence of $\LL$ 
is automatic if the base scheme is affine by 
\cite[XXVI.2.3]{SGA3}} Let $\TT$ be
the radical of $\LL$  and $\TT_d$ 
its maximal split subtorus. Then $Z_\HH(\TT_d)=\LL.$
\end{proposition}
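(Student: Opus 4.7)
The inclusion $\LL\subset Z_\HH(\TT_d)$ is immediate from $\TT_d\subset\TT=\mathrm{rad}(\LL)\subset Z(\LL).$ For the reverse inclusion my strategy is to exhibit a split subtorus $\SS\subset\TT_d$ with $\LL=Z_\HH(\SS)$; Lemma~\ref{wellknown} applied to this $\SS$ then yields $Z_\HH(\TT_d)=Z_\HH(\SS)=\LL.$

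To produce $\SS$ I invoke the dynamical description of parabolic subgroups of reductive group schemes (\cite{SGA3}, Exp.~XXVI, \S1). Attached to any $\XX$--cocharacter $\lambda\colon\G_{m,\XX}\to\HH$ is the parabolic $\PP(\lambda)=\{\,g\in\HH : \lim_{t\to 0}\lambda(t)g\lambda(t)^{-1}\ \text{exists}\,\}$ with Levi $Z_\HH(\lambda),$ and conversely every pair $(\PP,\LL)$ arises in this way (at least \'etale-locally on $\XX$) from a cocharacter $\lambda$ whose image is central in $\LL,$ hence lies in $\TT=\mathrm{rad}(\LL).$ The crucial observation is then that \emph{any} $\XX$--cocharacter $\G_{m,\XX}\to\TT$ automatically factors through $\TT_d$: its schematic image is a split subtorus of $\TT,$ hence lies in the maximal split subtorus by the very definition of $\TT_d.$ Setting $\SS:=\mathrm{Im}(\lambda)\subset\TT_d$ we obtain $\LL=Z_\HH(\lambda)=Z_\HH(\SS)$ as desired.

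The main obstacle is producing $\lambda$ globally over $\XX,$ since the SGA3 dynamical construction is only \'etale-local. I would handle this as follows: reduce first to $\XX$ connected (hence integral, by normality), then use Lemma~\ref{normalsplit} to obtain an \'etale cover $(\UU_i)_{i\in I}\to\XX$ with $\HH$ split over each normal noetherian integral $\UU_i=\Spec(R_i).$ The equality of closed subgroup schemes $\LL=Z_\HH(\TT_d)$ in $\HH$ may then be checked \'etale-locally on $\XX$---both are smooth and finitely presented over $\XX,$ and centralizers commute with base change by Proposition~\ref{centralizer}(1). Over each $\UU_i$ the cocharacter argument applies to the split group $\HH_{\UU_i},$ with $\lambda_i$ whose image is contained in $(\TT_d)_{\UU_i}$ because the parabolic $\PP_{\UU_i}$ is the base change of the $\XX$--rational $\PP,$ and hence its associated cocharacter direction is forced to be $\XX$--rational, i.e.\ lies in $\TT_d$ rather than in the (possibly larger) maximal split subtorus of $\TT_{\UU_i}.$
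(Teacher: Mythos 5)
The approach you take is genuinely different from the paper's, and the key step is under-justified.

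The paper reduces to the Zariski-local case: by Lemma~\ref{maxtor}, $(\TT_d)_{R_x}$ is still the maximal split subtorus of $\TT_{R_x}$ for every $x$, and then \cite[XXVI, prop.~6.8]{SGA3} gives the desired equality $Z_\HH(\TT_d)=\LL$ over each local ring directly. The whole point of Lemma~\ref{maxtor} is that localization at a prime \emph{preserves} the ``maximal split'' property, so the local result from SGA3 applies without modification. You, by contrast, pass to an \'etale cover $\UU_i\to\XX$ where $\HH$ is split. But over $\UU_i$ the torus $\TT_{\UU_i}$ is entirely split, so $(\TT_d)_{\UU_i}$ is in general a \emph{proper} subtorus of the maximal split subtorus of $\TT_{\UU_i}$, and one can no longer cite a ready-made result; one has to prove that a suitable cocharacter can be found inside $(\TT_d)_{\UU_i}$.

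Here is the gap: you assert that because $\PP$ and $\LL$ descend to $\XX$, the ``associated cocharacter direction is forced to be $\XX$-rational.'' This is not true as stated. For a general parabolic $\PP$ with Levi $\LL$, the cocharacters $\lambda$ with $\PP=\PP(\lambda)$ and $\LL=Z_\HH(\lambda)$ do not form a single ray but an open convex cone $C$ of positive dimension in $X_*(\TT_{\UU_i})\otimes\bbR$. Nothing forces an arbitrarily chosen $\lambda_i\in C$ to lie in the sublattice $X_*(\TT_d)$ (the Galois-invariants). What \emph{is} true is that $C$ is stable under the action of the Galois group $\Gamma$ of a Galois closure of $\UU_i/\XX_i$ (since $\PP,\LL,\TT$ are pulled back from $\XX$), and $C$ is convex and nonempty, so by averaging a single point of $C$ over $\Gamma$ and clearing denominators one obtains a $\Gamma$-fixed lattice point $\lambda\in C\cap X_*(\TT_d)$. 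With this convexity-plus-averaging argument supplied, your proof goes through; without it, the crucial inclusion $\mathrm{Im}(\lambda_i)\subset(\TT_d)_{\UU_i}$ is unjustified. The paper's Zariski-local reduction via Lemma~\ref{maxtor} sidesteps this entirely, which is why that route is cleaner.

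One small side remark: once you have a split subtorus $\SS\subset\TT_d$ with $Z_\HH(\SS)=\LL$, you do not actually need Lemma~\ref{wellknown}; the chain $\LL\subset Z_\HH(\TT_d)\subset Z_\HH(\SS)=\LL$ closes immediately. Lemma~\ref{wellknown} is not wrong to invoke, just superfluous.
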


\begin{proof}
Since $\TT$ is the centre of $\LL$
we have $\LL\subset Z_\HH(\TT).$ The inclusion $\TT_d\subset \TT$ yields
$Z_\HH(\TT)\subset Z_\HH(\TT_d)$. Thus  we have $\LL\subset Z_\HH(\TT_d)$.
By the  Lemma below and by \cite[XXVI, prop. 6.8]{SGA3} the above inclusion is an equality locally in the Zariski topology, hence globally.
\end{proof}

\begin{lemma}\label{maxtor} Assume $\XX = \Spec(R)$ is affine and as in the 
Proposition.
Let  $x \in \XX$ and consider the localized ring $R_x.$ Then  $({\TT_{d}})_{R_x}$ is 
the maximal split subtorus of $\TT_{R_x}$. In particular, if $K$ denotes the quotient field 
of $R$ then $\TT_d \times_R K$ is the maximal split subtorus of $\TT \times_R K.$
\end{lemma}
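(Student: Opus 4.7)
The plan is to describe the maximal split subtorus via its cocharacter lattice with Galois action, and then to check that this description is preserved under the localization $R \to R_x$. Since $R$ is a normal noetherian integral domain, the torus $\TT$ is isotrivial: there exists a connected finite Galois \'etale cover $\widetilde R/R$ with Galois group $G$ such that $\TT_{\widetilde R}$ is split, so that the cocharacter lattice $\bX_*(\TT)$ is a finite free $\Z$--module equipped with a continuous $G$--action. Under the standard anti-equivalence between $R$--tori split by $\widetilde R/R$ and $G$--lattices, subtori $\SS \hookrightarrow \TT$ correspond to saturated $G$--stable sublattices $\bX_*(\SS) \subset \bX_*(\TT)$, and $\SS$ is split iff $G$ acts trivially on $\bX_*(\SS)$. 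Since invariants of a finite group action on a torsion-free $\Z$--module are automatically saturated (if $n\cdot m$ is fixed then $m$ is fixed, as $\bX_*(\TT)$ is torsion-free), one concludes $\bX_*(\TT_d)=\bX_*(\TT)^G$.

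The crux is to verify that the base change $\widetilde R_x := \widetilde R \otimes_R R_x$ is still a connected Galois cover of $R_x$ with the same group $G$. Since $\widetilde R$ is a connected finite \'etale cover of the normal noetherian integral domain $R$, it is itself a normal noetherian integral domain. The ring $\widetilde R_x$ is the localization of $\widetilde R$ at the image in $\widetilde R$ of the multiplicative set $R \setminus \mathfrak{p}_x$, a subset of nonzero elements of the domain $\widetilde R$; hence $\widetilde R_x$ embeds in $\operatorname{Frac}(\widetilde R)$ and is in particular an integral domain, so $\Spec(\widetilde R_x)$ is connected. The Galois identity $\widetilde R \otimes_R \widetilde R \cong \prod_G \widetilde R$ is preserved under base change to $R_x$, giving $\widetilde R_x \otimes_{R_x} \widetilde R_x \cong \prod_G \widetilde R_x$, so $\widetilde R_x/R_x$ is Galois with group $G$. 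This is the only non-formal step.

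Applying the lattice description now over $R_x$ yields
$$
\bX_*\bigl((\TT_{R_x})_d\bigr) \,=\, \bX_*(\TT_{R_x})^G \,=\, \bX_*(\TT)^G \,=\, \bX_*(\TT_d) \,=\, \bX_*\bigl((\TT_d)_{R_x}\bigr).
$$
Thus $(\TT_d)_{R_x}$ is a split subtorus of $\TT_{R_x}$ contained in the maximal one and of the same cocharacter rank, whence equal to it. The final assertion, with $K$ in place of $R_x$, is the special case where $x=(0)$ is the generic point of $\Spec(R)$.
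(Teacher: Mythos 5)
Your proof is correct and takes essentially the same route as the paper's: both characterize $\TT_d$ via the Galois-fixed part of the (co)character lattice of an isotrivial splitting $\widetilde R/R$, and both observe that this description is preserved under the localization $R \to R_x$ because the lattice and the Galois group are unchanged. The only cosmetic differences are that you work with cocharacters where the paper works with characters, and you verify directly that $\widetilde R \otimes_R R_x$ remains a connected Galois cover of $R_x$ with group $G$, whereas the paper reduces to $R$ local and cites Bourbaki (Ch.~V, \S 2.2, Thm.~2) for the identification of $\Gal(\widetilde R/R)$ with $\Gal(\widetilde K/K)$; your deduction of the $K$-statement as the case $x=(0)$ is also the cleaner way to organize the two assertions.
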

\begin{proof} It suffices to show that $({\TT_d})_K$ is the maximal split
subtorus of $\TT_K$.  
Recall that $\TT$ is determined by its lattice of characters
$X(\TT)$ equipped with an action of ${\rm Gal}\,(\widetilde{R}/R),$
and that $\TT_d$ corresponds to the maximal sublattice
in $X(\TT)$ stable (elementwise) with respect to ${\rm Gal}\,(\widetilde{R}/R)$. Similar considerations apply to $\TT_K.$ 
It remains to note that $\TT_K$ and $\TT$ have the same lattices of characters and that ${\rm Gal}\,(\widetilde{R}/R) \simeq {\rm Gal}\,(\widetilde{K}/K)$ by  \cite[Ch5 \S2.2 theo.2]{Bbk}). \end{proof}

\begin{proposition}\label{G_m}
Let $\GG$ be a reductive group over
a normal ring\footnote{All of our normal rings are  hereon assumed to be integral and noetherian.}  $R$. If $\GG$  contains
a proper parabolic subgroup $\PP$ then it contains a split non-central subtorus
${\bf G}_{m,R}$.
\end{proposition}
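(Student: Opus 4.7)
Since $R$ is a normal domain, $\Spec(R)$ is integral and therefore connected; it is also affine. By \cite[XXVI.2.3]{SGA3} the proper parabolic $\PP$ admits a Levi subgroup $\LL\subset\PP$. Note that $\LL\neq\GG$: otherwise $\PP\supseteq\LL=\GG$ would contradict the properness of $\PP$.

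Let $\TT$ be the radical of $\LL$ and let $\TT_d\subset\TT$ be its maximal split subtorus, whose existence is guaranteed by Lemma \ref{facts}(1). Proposition \ref{centralizersplit} then gives $Z_\GG(\TT_d)=\LL$. Since $\LL\neq\GG$, the torus $\TT_d$ is not central in $\GG$ (otherwise its centralizer would be all of $\GG$). In particular $\TT_d$ is non-trivial, and being a split torus over $R$ we have $\TT_d\simeq\bG_{m,R}^r$ for some $r\geq 1$.

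It remains to extract a single $\bG_{m,R}$ subtorus of $\TT_d$ whose image in $\GG$ is non-central. Let $\iota_1,\dots,\iota_r\colon\bG_{m,R}\hookrightarrow\TT_d$ be the $r$ coordinate inclusions. If every image $\iota_i(\bG_{m,R})$ lay in the centre of $\GG$, then since the centre is a subgroup scheme and the product map $\iota_1(\bG_{m,R})\times_R\cdots\times_R\iota_r(\bG_{m,R})\to\TT_d$ is an isomorphism, $\TT_d$ itself would be central, contradicting the previous paragraph. Hence at least one $\iota_i$ exhibits $\bG_{m,R}$ as a split non-central subtorus of $\GG$. The key technical input is Proposition \ref{centralizersplit}, which converts properness of $\PP$ into non-centrality of $\TT_d$; the rest is a routine unpacking of the split-torus structure and should be the easiest part of the argument.
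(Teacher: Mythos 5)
Your argument is correct, but it takes a genuinely different route from the paper's own proof. The paper first reduces to the case where $\GG$ is semisimple (so that the centre of $\GG$ is finite and the non-centrality of any copy of $\bG_{m,R}$ is automatic), and then applies \cite[XXVI, lemmes 6.6 et 6.7]{SGA3} directly to the radical $\TT$ of a Levi subgroup: $\TT$ admits a nontrivial character $\TT \to \bG_m$, and over a normal base this forces $\TT$ to contain a split $\bG_{m,R}$. You instead stay with the general reductive $\GG$ and invoke Proposition \ref{centralizersplit}, established just above in the same section, to conclude $Z_\GG(\TT_d)=\LL\neq\GG$; this gives you, in one stroke, both the non-triviality and the non-centrality of $\TT_d$, after which peeling off a non-central coordinate copy of $\bG_{m,R}$ from the split torus $\TT_d$ is a routine subgroup-scheme observation. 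Your route avoids the semisimple reduction and in fact proves the slightly stronger statement that the entire maximal split subtorus $\TT_d$ is non-central; the trade-off is that Proposition \ref{centralizersplit} (via Lemma \ref{maxtor}) silently imports a noetherian hypothesis on $R$ that the paper's direct appeal to SGA3 does not require, though this is harmless given the standing assumptions of the section.
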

\begin{proof}
We may assume that $\GG$ is semisimple. Since the base is affine, $\PP$ contains 
a Levi subgroup $\LL$. 
Let $\TT$ be the radical
of $\LL$ and $\TT_d$ its maximal split subtorus. By Proposition~\ref{centralizersplit}, $Z_\HH(\TT_d)=\LL.$ Hence $\TT_d\not=1$.
\end{proof}
\begin{corollary} For a reductive group scheme $\GG$ over a normal ring $R$
to contain 
a proper parabolic subgroup it is necessary and sufficient that it  contains
a non-central split subtorus.
\end{corollary}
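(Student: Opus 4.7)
My plan is to prove the corollary by combining Proposition \ref{G_m} for the forward implication with Proposition \ref{parabolic} applied at the generic point for the reverse. The forward direction ($\Rightarrow$) is already exactly the content of Proposition \ref{G_m}, so the real work lies in the converse.

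For the converse, suppose $\SS \subset \GG$ is a non-central split subtorus. Since $R$ is, by the standing convention, a normal integral domain with fraction field $K$, the scheme $\XX = \Spec R$ has a generic point $\eta = \Spec K$. The key claim I would establish is that the fiber $\SS_\eta \subset \GG_\eta$ remains non-central over $K$; once this is in hand, Proposition \ref{parabolic} (with $\HH = \GG$ and $x = \eta$) immediately yields a proper parabolic subgroup of $\GG$.

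To verify the claim, I would argue in terms of the centralizer. By Proposition \ref{centralizer}(2), the centralizer $Z_\GG(\SS)$ is a closed subgroup scheme of $\GG$, and by Proposition \ref{centralizer}(1) it commutes with base change. The hypothesis $\SS \not\subset Z(\GG)$ is equivalent to the strict inclusion $Z_\GG(\SS) \subsetneq \GG$ of closed subschemes, which is to say that the ideal sheaf $\mathcal I$ defining $Z_\GG(\SS)$ inside $\GG$ is nonzero. Now $\GG$ is smooth and in particular flat over $R$, so its coordinate ring is a torsion-free $R$-module; combined with the fact that $R$ is a domain, any nonzero section of $\mathcal I$ persists as a nonzero section of $\mathcal I \otimes_R K$ inside $\mathcal O(\GG_\eta)$. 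Consequently $Z_{\GG_\eta}(\SS_\eta) = Z_\GG(\SS)_\eta \subsetneq \GG_\eta$, which says precisely that $\SS_\eta$ is non-central in $\GG_\eta$.

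The main (and only real) obstacle is the base-change step ensuring that non-centrality over $R$ descends to non-centrality over the generic fiber, but this reduces to the elementary observation that a nonzero ideal in a flat algebra over a domain cannot vanish upon tensoring with the fraction field. Everything else is a direct assembly of Propositions \ref{G_m}, \ref{parabolic}, and \ref{centralizer}.
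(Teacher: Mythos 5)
Your proof is correct and follows the paper's own (one-line) proof: the forward direction is exactly Proposition~\ref{G_m}, and for the converse you supply the detail the paper leaves implicit, namely that non-centrality of $\SS$ passes to the generic fiber so that Proposition~\ref{parabolic} applies. The flatness/torsion-freeness argument is a valid way to see this; one could also observe that since $\SS$ is split and $\gg$ is locally free, the nonzero weight spaces of $\SS$ acting on $\gg$ are locally free direct summands of constant rank over the connected base, so $\SS_x$ is in fact non-central in $\GG_x$ for \emph{every} point $x$.
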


\section{AD and  MAD subalgebras}\label{admad}

Let $R$ be an object in $\kalg$ and  $\GG$ be an $R$--group, i.e a  group scheme over $R.$ 
Recall (see \cite{DG} II \S4.1) that to $\GG$ we can attach an $R$--functor on Lie algebras $\mathfrak{Lie}(\GG)$ which associates to an object $S$ of $\Ralg$ the kernel of the natural map $\GG(S[\epsilon]) \to \GG(S)$ where $S[\epsilon]$ is the algebra of dual numbers over $S.$ Let ${\rm Lie}(\GG) = \mathfrak{Lie}(\GG)(R).$ This is an $R$--Lie algebra that will be denoted by $\gg$ in what follows. 

\begin{remark} If $\GG$ is smooth, the additive group of ${\rm Lie}(\GG)$ represents $\mathfrak{Lie}(\GG)$, that is  $\mathfrak{Lie}(\GG)(S) = {\rm Lie}(\GG) \otimes_R S$ as $S$--Lie algebras (this equality is strictly speaking a functorial family of canonical isomorphisms).
\end{remark}

If $S$ is in $\Ralg$, $g \in \GG(S)$ and $ x \in \mathfrak{Lie}(\GG)(S)$, then $gxg^{-1} \in \mathfrak{Lie}(\GG)(S)$ This last  product is computed in the group $\GG(S[\epsilon])$ where $g$ is viewed as an element of $\GG(S[\epsilon])$ by functoriality. The above defines an action of $\GG$ on $\mathfrak{Lie}(\GG)(S)$, called the adjoint action and denoted by $ g \mapsto {\rm Ad} (g).$ This action in fact induces an $R$--group homomorphism 
$${\rm Ad} : \GG \to \mathfrak{Aut}\big(\mathfrak{Lie}(\GG)\big)$$ whose kernel is the centre of $\GG.$

Given a $k$--subspace $V$ of $\gg$ consider the $R$--group functor $Z_{\GG}(V)$ defined by
\begin{equation}\label{centrals}
Z_{\GG}(V) :S \to \{g \in \GG(S): {\rm Ad}(g)(v_S) = v_S \ \mbox{for every \ } v\in V \}
\end{equation}
for all $S$ in $\Ralg,$ where $v_S$ denotes the image of $v$ in $\gg \otimes_R S.$

We will denote by $RV$ the $R$-span of $V$ inside $\mathfrak{g}$, i.e.
$RV$ is the $R$-submodule of $\mathfrak{g}$ generated by $V$. 
\begin{remark}\label{linearity} Note that $Z_{\GG}(V) = Z_{\GG}(RV).$ This follows from the fact that the adjoint action of $\GG$ on $\gg$ is ``linear" (in a functorial way). 
\end{remark}

We now introduce some of the central concepts of this work.

A subalgebra $\mathfrak{m}$ of the $k$-Lie algebra $\gg$
is called an AD {\it subalgebra} if the adjoint action of each element $x \in \mathfrak{m}$ on $\gg$ is $k$--diagonalizable, i.e. $\gg$ admits a $k$--basis
consisting of eigenvectors of $\text{\rm ad}_{\gg}(x).$
A maximal  AD subalgebra of $\gg,$ namely one which is not properly included in any other AD subalgebra of $\gg$ is called a MAD subalgebra of 
$\gg.$\footnote{It is not difficult to see that any such $\mathfrak{m}$
is necessarily abelian, so AD can be thought as shorthand for
abelian $k$--diagonalizable or ad $k$--diagonalizable.}

\begin{example} Let $\dG$ be a semisimple Chevalley $k$--group and $\bT$ its standard 
maximal split torus. Let $\dh$ be the Lie algebra of $\bT$; it is a split
Cartan subalgebra of $\dg$. For all $R$ we have  $\gg: = {\rm Lie}(\dG_R) = 
\dg \otimes_k R.$ 
Assume that $R$ is {\it connected}. Then $\mathfrak{m}=
\dh\otimes 1$ is a MAD subalgebra 
of $\gg$ by \cite[cor. to theo.1(i)]{P1}. We have $Z_{\dG_R}(\mm) = \bT_R.$ 

Note that $\mathfrak{m}$  is not its own normalizer.
Indeed $N_{\gg}(\mathfrak{m})= Z_{\gg}(\mathfrak{m})=\dh\otimes_k R$.
Thus $\dh \otimes 1$ is not a Cartan subalgebra of $\gg$ in the usual sense. 
However, in infinite-dimensional Lie theory -- for example, in the case of Kac-Moody 
Lie algebras -- 
these types of subalgebras do play the role that  the split Cartan subalgebras play in 
the classical theory. This is our motivation for studying conjugacy questions 
related to MAD subalgebras. 
\end{example}

\begin{remark}\label{m(S)} Let $\ss$ be an abelian Lie subalgebra of $\gg.$ Let ${\mm}_1$ and ${\mm}_2$
be two subalgebras of $\ss$ which are AD subalgebras of $\gg.$ Because $\ss$ is abelian their sum ${\mm}_1+{\mm}_2$ is  also an AD subalgebra of $\gg.$  By considering the sum of all such subalgebras we see that 
$\ss$ contains a {\it unique} maximal subalgebra $\mathfrak{m}(\ss)$
which is an AD subalgebra of $\gg.$ Of course this AD subalgebra 
need not be a MAD subalgebra of $\gg$.

We will encounter this situation when $\ss$ is the Lie algebra of a torus $\SS$ inside a reductive group scheme $\GG.$ In this case we denote  $\mathfrak{m}(\ss)$ by  $\mathfrak{m}(\SS).$
\end{remark}

\begin{remark}\label{adup} Let $\mm$ be an AD subalgebra of $\gg$. Then for any extension $S/R$ in $\kalg$  the image $\mm \otimes 1$ of $\mm$ in $\gg \otimes_R S$ is an AD subalgebra of $\gg \otimes_R S.$ Indeed if $x \in \mm$ and $v \in \gg$ are such that $[x, v] = \lambda v$ for some $\lambda \in k$, then $[x\otimes 1, v \otimes s] = v \otimes \lambda s = \lambda (v \otimes s)$ for all $s \in S.$ Thus $\gg \otimes_R S$ is spanned as a $k$--space by eigenvectors of $\text{\rm ad}_{\gg \otimes_R S}(x \otimes 1).$ Note that if the map $\gg \to \gg \otimes_R S$ is injective, for example if $S/R$ is faithfully flat, then we can identify $\mm$ with $\mm \otimes 1$ and view $\mm$ as an AD subalgebra of $\gg \otimes_R S.$

\end{remark}

The main thrust of this work is to investigate the question
of conjugacy of MAD subalgebras of $\gg$ when $\gg$ is a twisted form of $\dg \otimes_k R_n.$ The result we aim for is in the spirit of Chevalley's work, as explained in the Introduction. In the ``untwisted case''
the result is as expected.

\begin{theorem} Let $\dg$ be a split finite-dimensional semisimple Lie algebra over 
$k$ and $\dG$ the corresponding simply connected Chevalley group. Then all 
MAD subalgebras of $\dg\otimes_k R_n$ are conjugate to
$\dh\otimes 1$ under $\dG(R_n)$. 
\end{theorem}

This is a particular case of Theorem $1$ of \cite{P1} by taking
Cor 2.3 of \cite{GP2} into consideration. The proof is cohomological in nature,
which is also the approach that we will pursue here. As we shall see, the general twisted case holds many surprises in place.

We finish by stating and proving a simple result for future use.

\begin{lemma}\label{splitradical} Let $\G$ be a semisimple
algebraic
group over a field $L$ of characteristic $0$. Let $\T\subset \G$
be a torus and ${\T}_d$ be the (unique) maximal
split subtorus of $\T$. Set $\dg={\rm Lie}\,(\G)$, $\dt=
{\rm Lie}\,(\T)$ and ${\dt}_d={\rm Lie}\,({\T}_d)$. Then

\smallskip

\noindent
{\rm (i)} The adjoint action of ${\T}_d$ on $\dg$ is $L$--diagonalizable.
In particular, ${\dt}_d$ is  an {\rm  AD} subalgebra of $\dg$.

\smallskip

\noindent {\rm (ii)} ${\dt}_d$ is the largest subalgebra of $\dt$
satisfying the condition given in {\rm (i)}.
\end{lemma}
\begin{proof} Part (i) is clear. As for (ii) we may assume that $\dG$ is semisimple adjoint. Let ${\T}_a$
be the largest anisotropic subtorus of $\T$. The product morphism
${\T}_d\times {\T}_a \to \T$ is a central isogeny, hence
$\dt={\dt}_d\oplus {\dt}_a$ where ${\dt}_a={\rm Lie}\, ({\T}_a)$. We must
show that ${\dt}_a$ does not contain any nonzero element
whose adjoint action on $\dg$ is $L$--diagonalizable.
Let $h$ be such an element. Fix a basis $\{\,v_1,\ldots,v_n\,\}$
of $\dg$ and scalars $\lambda_i\in L$ such that
$$
[\,h, v_i\,]=\lambda_i v_i \ \ \forall\ 1\leq i \leq n.
$$
By means of this basis we identify ${\rm \bf GL}(\dg)$ with ${\rm \bf GL}_{n,L} $.
Consider the adjoint representation diagrams
$$
\T \hookrightarrow \G \stackrel{{\rm Ad}} \longrightarrow {\rm \bf GL}(\dg)\simeq
{{\rm \bf GL}}_{n,L}
$$
and
$$
\tt\hookrightarrow \dg \stackrel{\rm ad}{\longrightarrow} {\mathfrak{ gl}}\,(\dg)
\simeq {\mathfrak{gl}}_{n,L}.
$$
Since $\G$ is of adjoint type ${\rm Ad}$ is injective, so that we
can identify $\T$ with a subtorus, say $\widetilde{\T}$, of
${\rm \bf GL}_{n,L}$. Similarly for ${\T}_d$ and ${\T}_a$.
Since $\T\simeq \widetilde{\T}$ we see that $\widetilde{\T}_d$ and
$\widetilde{\T}_a$ are the maximal split and anisotropic parts of $\widetilde{\T}$.

Let ${\D}_n$ be the diagonal subgroup of ${\rm \bf GL}_{n,L}$.
By construction we see that
$$
{\rm ad}_{\gg}(h)\in {\rm Lie}\,({\D}_n) \cap {\rm Lie}\,(\widetilde{\T}_a)=
{\rm Lie}\,({\D}_n\cap \widetilde{\T}_a),
$$
this last by \cite[theo. 12.5]{[Hu]} since ${\rm char}(k)=0$. 
Thus ${\D}_n\cap \widetilde{\T}_a$
has dimension $>0$. But then the connected component of the identity
of ${\D}_n\cap \widetilde{\T}_a$ is a non-trivial split torus
which contradicts the fact that $\widetilde{\T}_a$ is anisotropic.
\end{proof}

\section{The correspondence between MAD subalgebras  and maximal split tori}
Throughout this section $R$ will denote an object of $\kalg$ such that $\XX = \Spec(R)$ 
is normal integral and noetherian and $K$ its fraction field. 
The purpose of this section is to establish the following fundamental correspondence.

\begin{theorem}\label{maincorrespondence}
Let $\GG$ be a semisimple simply connected  $R$--group and $\mathfrak{g}$ its Lie algebra.

\smallskip

\noindent {\rm (1)} Let $\mathfrak{m}$ be a {\rm MAD} subalgebra  of $\mathfrak{g}$. Then $Z_{\GG}(\mathfrak{m})$ is a reductive $R$--group 
 and its radical
contains a unique maximal split torus $\SS(\mathfrak{m})$
of ${\GG}$.

\smallskip

\noindent {\rm (2)} Let $\SS$ is a maximal split torus of $\GG,$ and let $\mathfrak{m}(\SS)$ be the unique maximal
subalgebra of  Lie algebra ${\rm Lie}\,(\SS)$ which is an {\rm AD} subalgebra of $\gg$ (see Remark  \ref{m(S)}). 
Then $\mathfrak{m}(\SS)$ is a {\rm MAD} subalgebra of $\gg.$

\smallskip

\noindent {\rm (3)} The process $\mathfrak{m}\to \SS(\mathfrak{m})$
and $\SS\to \mathfrak{m}(\SS)$ described above gives a bijection between
the set of {\rm MAD} subalgebras  of $\mathfrak{g}$ and the set of maximal
split tori of $\GG$.

\smallskip

\noindent{\rm (4)} If $\mathfrak{m}$ and $\mathfrak{m}'$ are two {\rm MAD} subalgebras  of $\gg,$ then for $\mathfrak{m}$ and $\mathfrak{m}'$ to be  conjugate under the adjoint action of 
$\GG(R)$ it is necessary and sufficient that the maximal split tori $\SS(\mathfrak{m})$ and  $\SS(\mathfrak{m}')$ 
be conjugate under the adjoint action of $\GG(R)$ on $\gg.$  
\end{theorem}

\begin{remark}\label{natureofm(S)} Since $\SS$ is split we have  ${\rm Lie}\,(\SS) = X({\SS})^{\rm o} \otimes_\Z R$
 where $X(\SS)^{\rm o}$ is the cocharacter group of $\SS$.
As we shall see in the proof of Lemma \ref{splittorus} $\mm(\SS) =  X({\SS})^{\rm o} \otimes_\Z k.$
\end{remark}

The proof of the Theorem will be given at the end of this section after a long list of 
preparatory results. What is remarkable about this correspondence is that MAD subalgebras 
exist over $k$ but not over $R$ while, in general, the exact opposite is true for split 
tori of $\GG.$ It is this correspondence that allows us to use the methods from  
\cite{SGA3} to the study of conjugacy questions.
\smallskip

We begin with some general observations and fixing some notation that will be used throughout the proofs of this section.
Since $\XX$ is connected all geometric fibers of $\GG$ are of the same type.  
Let $\dG$ be the corresponding Chevalley group over $k$ and $\dg$ its Lie algebra. 




\begin{lemma}\label{upperbound}
Let $\mathfrak{m}$ be an AD subalgebra of $\mathfrak{g}$.
Then

\smallskip
\noindent {\rm (1)}  ${\rm dim}_k(\mathfrak{m}) \leq
{\rm rank}\,(\dg).$
In particular any AD subalgebra of $\gg$ is included inside a MAD subalgebra of $\gg.$ 

\smallskip

\noindent {\rm (2)}   The natural map $\mm\otimes_k R \to R\,\mm$
is an $R$--module isomorphism.  In particular
$R\,\mathfrak{m}$ is a free $R$--module of rank $={\rm dim}_k(\mathfrak{m}).$

\smallskip

\noindent {\rm (3)} Let $\{v_1,\ldots, v_m\}$ be a $k$--basis
of $\mathfrak{m}.$ For every $x\in \XX$ the elements $v_i\otimes 1\in
\gg\otimes_R R_x$ are $R_x$--linearly independent. Similarly if we replace $R_x$ by K or any field extension of $K.$
\end{lemma}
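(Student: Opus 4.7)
The plan is to prove (3) first at the generic fibre and deduce (1), (2) and the other cases of (3) as consequences. Since $\GG$ is smooth over $R$, the $R$-module $\gg = {\rm Lie}(\GG)$ is finitely generated projective, hence torsion free. Writing $K$ for the fraction field of $R$, the canonical maps $\gg \hookrightarrow \gg_K := \gg \otimes_R K$, $\gg \otimes_R R_x \hookrightarrow \gg_K$ for every $x \in \XX$, and $\gg_K \hookrightarrow \gg \otimes_R L$ for any field extension $L/K$ are therefore all injective. It suffices for (3) to establish that $v_1, \ldots, v_m$ remain $K$-linearly independent in $\gg_K$.

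For this, use the AD hypothesis to write $\gg = \bigoplus_{\lambda \in W} \gg_\lambda$ as a $k$-vector space, where $\gg_\lambda = \{w \in \gg : [h,w] = \lambda(h) w \text{ for all } h \in \mm\}$ and $W = \{\lambda \in \mm^\ast : \gg_\lambda \neq 0\}$. Because the bracket on $\gg$ is $R$-bilinear and $\lambda(h) \in k$, each $\gg_\lambda$ is in fact an $R$-submodule of $\gg$, the decomposition is one of $R$-modules, and it extends to $\gg_K = \bigoplus_{\lambda \in W} (\gg_\lambda \otimes_R K)$. The crucial point is that $W$ spans $\mm^\ast$ over $k$: an $h \in \mm$ vanishing on all $\lambda \in W$ annihilates every $\gg_\lambda$, hence all of $\gg$ and of $\gg_K$, and therefore lies in the centre of $\gg_K$; since $\gg_K = {\rm Lie}(\GG_K)$ is semisimple as a $K$-Lie algebra (in characteristic zero), its centre vanishes, forcing $h=0$. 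Now assume $\sum_i c_i v_i = 0$ in $\gg_K$ with $c_i \in K$; bracketing against a nonzero $y \in \gg_\lambda \otimes_R K$ for each $\lambda \in W$ gives
$$
0 = \Bigl[\sum_i c_i v_i,\ y \Bigr] = \Bigl(\sum_i c_i \lambda(v_i) \Bigr)\, y,
$$
so $\sum_i c_i \lambda(v_i) = 0$ for every $\lambda \in W$. Because these functionals $K$-span $(\mm \otimes_k K)^\ast$, we conclude $c_i = 0$, which proves (3).

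Parts (1) and (2) now follow. Applying (3) with $R_x = K$ gives $R$-linear independence of the $v_i$ in $\gg$; the natural surjection $\mm \otimes_k R \twoheadrightarrow R\mm$, $v_i \otimes 1 \mapsto v_i$, is then an isomorphism onto a free $R$-module of rank $\dim_k \mm$, establishing (2). For (1), the $K$-subspace $K\mm \subset \gg_K$ has dimension $\dim_k \mm$ by (3), is abelian (inherited from $\mm$), and consists of ad-semisimple elements (the $k$-diagonalizable $v_i$ remain $K$-diagonalizable on $\gg_K$). It is thus a toral subalgebra of the semisimple $K$-Lie algebra $\gg_K$ and sits inside a Cartan subalgebra, of $K$-dimension $\rank(\dg)$; this yields $\dim_k \mm \le \rank(\dg)$. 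The ``in particular'' clause follows from the bound: any AD subalgebra can be enlarged to one of maximal $k$-dimension containing it, and such a subalgebra must be a MAD.

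The main obstacle is the linear-independence assertion of (3), and within it the spanning claim $\mathrm{span}_k\, W = \mm^\ast$. This relies on the centrelessness of $\gg_K$ and is the only place where the semisimplicity hypothesis on $\GG$ is essential.
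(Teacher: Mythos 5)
Your proof is correct, but it takes a genuinely different route from the paper's, especially for parts (2) and (3). The paper localizes $R$, passes to a finite \'etale extension $\widetilde{R}/R$ splitting $\GG$ (Remark~\ref{localsplit}), and embeds the image of $\mm$ into a split Cartan subalgebra $\mathcal{H}$ of $\dg\otimes_k\widetilde{K}$; writing each $\widetilde{v}_i$ in the basis of fundamental coweights, the key observation there is that the coordinates lie in $k$ (since the $\mathrm{ad}$-eigenvalues do), and a $\widetilde{K}$-linear dependence among the $\widetilde{v}_i$ is then projected onto a $k$-linear dependence by splitting $\widetilde{K}$ as a $k$-vector space. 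You instead stay over $R$ and its fraction field $K$, working directly with the AD eigenspace decomposition $\gg=\bigoplus_\lambda\gg_\lambda$; your essential observation that each $\gg_\lambda$ is an $R$-submodule (not merely a $k$-subspace), so that the decomposition survives $\otimes_R K$, lets you read off from a putative relation $\sum_i c_i v_i=0$ in $\gg_K$ the scalar equations $\sum_i c_i\lambda(v_i)=0$ for every weight $\lambda$. The spanning of $\mm^\ast$ by the weights --- deduced from centrelessness of the semisimple Lie algebra $\gg_K$ --- then forces $c_i=0$. Your route bypasses the local reduction, the trivializing extension, and the Cartan-subalgebra/coweight coordinate system entirely for (2)--(3), at the modest cost of invoking semisimplicity of $\gg_K$ explicitly through centrelessness; for (1), both arguments ultimately place the generic image of $\mm$ inside a Cartan subalgebra of a generic fibre, the paper over $\widetilde{K}$ via \cite{P1}, and yours over $K$ (or $\ol K$).
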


\begin{proof}

 The three assertions are of local nature, so 
we can assume that $R$ is local.
We will establish the Lemma by first reducing the problem to the split case.
According to  Remark~\ref{localsplit}
there exists a finite \'etale  extension $\widetilde{R}/R$ 
such that  $\widetilde{R}$ is integral and normal and
 $\GG \times_R \widetilde{R} \simeq \dG_{\widetilde{R}}$.
Note that
the canonical map $\gg \to \gg \otimes_R \widetilde{R}\simeq  \dg \otimes_k \widetilde{R}$ 
is injective and that
if  $\{v_1,\ldots, v_m\}$ are $k$--linearly independent elements 
of $\mathfrak{m}$ which are $R$--linearly dependent, then the image of the elements  
$\{v_1,\ldots, v_m\}$ on 
${\rm Lie}(\GG \times_R \widetilde{R}) \simeq \dg \otimes_k \widetilde{R}$ 
are $k$--linearly independent
and are $\widetilde{R}$--linearly dependent.

Let $\widetilde{K}$ be the field of fractions of $\widetilde{R}.$ By 
Remark \ref{adup} 
the image of $\mm$ under the  injection $\gg \hookrightarrow \gg \otimes_R \widetilde{R} \simeq \dg\otimes_k \widetilde{R}$ is an AD subalgebra of $\dg\otimes_k \widetilde{R}$. By \cite[theo.1.(i)]{P1} the dimension of $\mm$ is at most the rank of $\dg.$ This establishes (1). 

 As for (2) and (3),  the crucial point--as explained
in \cite[prop. 4]{P1}--lies in the fact that the image $\widetilde{\mm}$ of $\mm$ under the  injection
$\gg \hookrightarrow \dg\otimes_k\widetilde{K}$
 sits inside a split Cartan subalgebra $\mathcal {H}$ 
of the split semisimple $\widetilde{K}$-algebra
$\dg\otimes_k \widetilde{K}$. 
 Consider the basis $\{\check{\omega}_1,\ldots,
\check{\omega}_\ell\}$ of $\mathcal{H}$ consisting
of the fundamental coweights  for a base $\alpha_1, \ldots,\alpha_\ell$ of the root system
of $(\dg \otimes_k \widetilde{K},\mathcal{H})$. Let $1 \leq n \leq m$ be such that $\{\widetilde{v}_1, \ldots \widetilde{v}_n\}$ is a maximal set of $\widetilde{K}$--linearly independent elements of $\dg (\widetilde{K})$. To establish (2) and (3) it will suffice to show that $n = m.$ 

Assume on the contrary that $n < m.$ Write  $\widetilde{v}_i =\sum c_{ji}\check{\omega}_j$ with
$c_{1i}, \ldots c_{\ell i}$ in $\widetilde{K}.$ The fact that the eigenvalues of
${\rm ad}_{\dg(\widetilde{K})}(\widetilde{v}_i)$ belong to $k$ show that the $c_{ji}$
necessarily belong to $k$. Indeed $\widetilde{v}_i$ acts on $\dg(\widetilde{K})_{\alpha_j}$ as multiplication by the scalar $c_{ji}.$

 Let $v = v_{n + 1}.$ Write $\widetilde{v} = \sum_{i=1}^n a_i \widetilde{v}_i$ with 
 $a_1, \ldots , a_n$ in $\widetilde{K}.$
Let $c_{jn+1}=\lambda_j$.
 Then $\langle\alpha_j, \widetilde{v}\rangle = \lambda_j$ and
 $$ \widetilde{v} = \sum_j(\sum _i a_ic_{ji})\check{\omega}_j = \sum_j \lambda_j\check{\omega}_j.$$
 Therefore for all $1 \leq j \leq \ell$ we have $\sum _i a_ic_{ji}  = \lambda_j.$
 
 Write $\widetilde{K} = k \oplus W$ as a $k$--space and use this decomposition to write $a_ i = d_i + w_i$ . Then $\sum _i d_ic_{ji}  = \lambda_j.$
  A straightforward calculation shows that $\langle\alpha_j , \widetilde{v} -\sum_i d_i\widetilde{v_i}
  \rangle = 0$ for all $j.$ 
  This forces 
  $$v_{n + 1} = v = \sum_i d_i v_i$$
   which contradicts the linear independence of the $v_i's$ over $k$.
\end{proof}

\begin{remark}\label{eigendecomposition}
Let $\SS < \GG $ be a split torus. Then there exist characters
$\lambda_i: \SS \to {\G}_{m,R}$ for $ 1\leq i\leq l$ such that
$
\gg=\oplus_{i=1}^l \, {\gg}_{\lambda_i}
$
where
$$
{\gg}_{\lambda_i}=\{\, v\in \gg:\ {\rm Ad}(g)v=\lambda_i(g)v \ \forall g\in
{\SS}(R)\,\}.
$$

At the Lie algebra level the situation is as follows. Let $\ss=
{\rm Lie}\,(\SS)\subset \gg$. Then $\ss\subset \SS(R[\varepsilon])$.
We avail ourselves of the useful convention that if $s\in \ss$ then to view
$s$ as an element of $\SS(R[\varepsilon])$ we write $e^{s\varepsilon}$.
There exist unique $R$--linear functionals $d\lambda_i:\ss \to
R$ such that $$
\lambda_i(e^{s\varepsilon}) =1+d\lambda_i(s)\varepsilon
\in R[\varepsilon]^{\times}=
\bG_{m,R}(R[\varepsilon]).
$$
Then for $s\in \ss$ and $v\in {\gg}_{\lambda_i}$ we have the following
equality in $\gg$
\begin{equation}\label{ad}
[s,v]=d \lambda_i(s)v.
\end{equation}
\end{remark}

\begin{lemma}\label{splittorus}
 Consider the restriction ${\rm Ad}_{\SS}: \SS \to {\rm \bf Gl}\,(\gg)$
of the adjoint representation of $\GG$ to $\SS$. There exists a finite
number of
characters $\lambda_1,\ldots,\lambda_l$ of $\SS$ such that
$
\gg=\bigoplus_{i=1}^l\,{\gg}_{\lambda_i}.
$
The $\lambda_i$ are unique and
$$
{\mm}(\SS)=\{\, s\in {\rm Lie}\,({\SS})\subset {\SS}\,(R[\varepsilon])\ : \
d\lambda_i(s)\in k\,\}.
$$
Furthermore $${\rm dim}_k \big({\mm}(\SS)\big)={\rm rank}(\SS)=
{\rm rank}_{R-mod}\big(R{\mm}(\SS)\big)
$$
and ${\rm Lie}(\SS)=R\mm(\SS)$.
\end{lemma}
\begin{proof} We appeal to the explanation given in
Remark~\ref{eigendecomposition}.
Let
$$
\nn=\{\,s\in \ss\ :\ d\lambda_i(s)\in k\ \forall i\,\}.
$$
Then~(\ref{ad}) shows not only that $\nn\subset \ss$ is an
AD subalgebra of $\gg$, but in fact that $\mm(\SS)\subset \nn$. By maximality
we have $\mm(\SS)=\nn$ as desired.

We now establish the last assertions. Let
$n$ be the rank of $\SS$, so $\SS\simeq {\G}_{m,R}^n$
and the character lattice $X(\SS)$ of $\SS$ is generated by the projections
$\pi_i: {\G}_{m,R}^n\to  {\G}_{m,R}$. Since the kernel of the adjoint representation 
of $\GG$ is finite
the sublattice of $X(\SS)$ generated by $\lambda_1,\ldots,\lambda_{\ell}$ has finite index;
in particular every character $\pi$ of $\SS$ can be written as a linear combination
$\pi=a_1\lambda_1+\cdots+a_{\ell}\lambda_{\ell}$ with rational coefficients $a_1,\ldots,
a_{\ell}$
and hence $d\pi=a_1d\lambda_1+\cdots+ a_nd\lambda_{\ell}$.
Similarly $\pi$ can be written as  $\pi=a_1\pi_1+\cdots +a_n\pi_n$ 
with $a_1,\ldots,a_n\in\mathbb{Z}$ and we then have
$d\pi=a_1d\pi_1+\cdots+ a_nd\pi_n$.
It follows that
$$
\begin{array}{ccl}
{\mm}(\SS) & = &\{\, s\in \ss\ : \
d\lambda_i(s)\in k\ \forall i\,\}\\
& = & \{\,s\in \ss \ : \ d \pi(s)\in k \ \forall \pi\in X(\SS)\,\}\\
& = & \{\,s\in \ss \ : \ d\pi_i(s)\in k\ \forall i\,\}.
\end{array} 
$$
The identification $\SS\simeq {\G}_{m,R}^n$ induces 
the identification $\ss\simeq {\G}_{a,R}^n$. The above equalities yield 
$$
{\mm}(\SS)\simeq \{\,(s_1,\ldots,s_n)\ : \ s_i\in k \ \forall i\,\},
$$
hence the last assertions follow immediately.
%
\end{proof}

\begin{proposition}\label{directsummand}
 Let $\mm$ be an {\rm AD} subalgebra of $\gg$.
Then the submodule $R\mm$ is a direct summand of $\mathfrak{g}$.
\end{proposition}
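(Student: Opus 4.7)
By Lemma~\ref{upperbound}(2) the $R$--module $R\mm$ is canonically identified with the free module $\mm \otimes_k R$, and $\gg = {\rm Lie}(\GG)$ is finitely generated projective by the smoothness of $\GG$. The plan is to apply the standard pointwise criterion: a finitely generated submodule of a finitely generated projective module over a Noetherian ring is a direct summand if and only if the induced maps on residue fibres are injective at every maximal ideal. The statement thus reduces to showing that for every maximal ideal $\mathfrak{p}$ of $R$ the natural map
$$\mm \otimes_k k(\mathfrak{p}) \;\longrightarrow\; \gg \otimes_R k(\mathfrak{p}) \;=\; {\rm Lie}(\GG \times_R k(\mathfrak{p}))$$
is injective. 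Fix such a $\mathfrak{p}$ and a $k$--basis $v_1,\dots,v_d$ of $\mm$; I split the verification into two steps.

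The hard step is to show the images $\bar v_1,\dots,\bar v_d$ are $k$--linearly independent in $\gg_{k(\mathfrak{p})}$. Argue by contradiction, supposing some non-zero $v \in \mm$ maps to $0$, equivalently $v \in \mathfrak{p} R_\mathfrak{p}\cdot \gg_{R_\mathfrak{p}}$. Pick a $k$--basis $\{w_\alpha\}$ of $\gg$ of simultaneous $\mm$--eigenvectors, supplied by the AD hypothesis, with $k$--valued eigenvalues $\lambda_\alpha(v)\in k$. Reducing the identity $[v,w_\alpha] = \lambda_\alpha(v)\, w_\alpha$ modulo $\mathfrak{p} R_\mathfrak{p}$ puts $\lambda_\alpha(v)\, w_\alpha$ into $\mathfrak{p}R_\mathfrak{p}\cdot \gg_{R_\mathfrak{p}}$; since $\lambda_\alpha(v)$ is a unit of $R_\mathfrak{p}$ whenever it is non-zero, this gives $w_\alpha \in \mathfrak{p}R_\mathfrak{p}\cdot \gg_{R_\mathfrak{p}}$ for every $\alpha$ with $\lambda_\alpha(v) \neq 0$. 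Because $\{w_\alpha\}$ generates $\gg$ as a $k$--space it generates $\gg_{R_\mathfrak{p}}$ as an $R_\mathfrak{p}$--module, hence
$$\gg_{R_\mathfrak{p}} \;=\; R_\mathfrak{p}\cdot Z_\gg(v) + \mathfrak{p}R_\mathfrak{p}\cdot \gg_{R_\mathfrak{p}}.$$
Nakayama's Lemma then gives $\gg_{R_\mathfrak{p}} = R_\mathfrak{p}\cdot Z_\gg(v) \subseteq Z_{\gg_{R_\mathfrak{p}}}(v)$, so the image of $v$ in $\gg_{R_\mathfrak{p}}$ is central. Since $\GG$ is semisimple simply connected in characteristic $0$, the centre of $\gg_{R_\mathfrak{p}}$ is trivial, so $v = 0$ in $\gg_{R_\mathfrak{p}}$. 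As the projective $R$--module $\gg$ is torsion-free over the domain $R$ we have $\gg \hookrightarrow \gg_{R_\mathfrak{p}}$, which forces $v = 0$ in $\gg$, a contradiction.

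For the second step, once the images $\bar v_1,\dots,\bar v_d$ are $k$--linearly independent they form a $k$--basis of their $k$--span $\bar \mm \subseteq \gg_{k(\mathfrak{p})}$; by Remark~\ref{adup} this $\bar\mm$ is an AD subalgebra of $\gg_{k(\mathfrak{p})}$, which is the Lie algebra of the semisimple simply connected $k(\mathfrak{p})$--group $\GG \times_R k(\mathfrak{p})$. Applying Lemma~\ref{upperbound}(3) with base ring $k(\mathfrak{p})$ itself, so that $\Spec(k(\mathfrak{p}))$ consists of a single point whose local ring is $k(\mathfrak{p})$, immediately gives that $\bar v_1,\dots,\bar v_d$ are $k(\mathfrak{p})$--linearly independent, which is the injectivity required. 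The main obstacle is the first step; the bridge between the $k$--AD structure of $\mm$, the $R$--module structure of $\gg$, and reduction modulo $\mathfrak{p}$ is Nakayama's Lemma together with triviality of the centre of $\gg$, while the second step is a routine re-application of Lemma~\ref{upperbound}(3).
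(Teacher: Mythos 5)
Your proof is correct, and it takes a genuinely different route from the paper's. The paper's proof passes to the localization $R_x$, invokes Lemma~\ref{upperbound}(3) to get $R_x$--linear independence of the chosen $k$--basis of $\mm$, and then cites [Lam, Cor.~1.8] (``a variation of Nakayama's lemma'') to conclude that $R_x\mm$ is a direct summand of $\gg_x$. You instead phrase the projectivity of $\gg/R\mm$ as the fibre-wise criterion over the residue fields $k(\mathfrak{p})$ (which is the more natural form of the Lam-type result) and then verify that criterion directly. Your first step is the genuinely new ingredient: starting from a hypothetical $v\in\mm$ with $\bar v\in\mathfrak{p}R_\mathfrak{p}\gg_{R_\mathfrak{p}}$, you use the AD eigenbasis $\{w_\alpha\}$ with $k$--valued eigenvalues $\lambda_\alpha(v)$ to show that every $\bar w_\alpha$ with $\lambda_\alpha(v)\neq 0$ lies in $\mathfrak{p}R_\mathfrak{p}\gg_{R_\mathfrak{p}}$, apply Nakayama to deduce $\gg_{R_\mathfrak{p}}=R_\mathfrak{p}\cdot\overline{Z_\gg(v)}$, and conclude that $\bar v$ is central; the triviality of the centre of the semisimple Lie algebra $\gg_{R_\mathfrak{p}}$ in characteristic zero (seen by faithfully flat descent from $Z(\dg\otimes_k S)=0$) then forces $v=0$ since $\gg$ is torsion-free over the domain $R$. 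Your second step still uses Lemma~\ref{upperbound}(3), but in a different role from the paper: you apply it over the field $k(\mathfrak{p})$ (viewed as a normal integral noetherian $k$--algebra) to the AD subalgebra $\bar\mm\subset\gg\otimes_R k(\mathfrak{p})$ furnished by Remark~\ref{adup}, upgrading the $k$--independence of step one to $k(\mathfrak{p})$--independence. The net effect is a more self-contained argument that makes the Lie-theoretic input (the AD structure plus the vanishing centre) explicit where the paper defers to a cited module-theoretic lemma; the trade-off is that your ``hard step'' partially re-derives, in fibre-wise form over residue fields, the content that the paper packages in Lemma~\ref{upperbound}(3) together with [Lam].
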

\begin{proof} Let $M=\mathfrak{g}/R\mm.$ 
Assume for a moment that $M$ is a projective $R$--module.
Then the exact sequence
$$
0\longrightarrow R\mm\longrightarrow \mathfrak{g}\longrightarrow
M\longrightarrow 0
$$
is split and the Proposition follows.

Thus it remains to show that $M$ is a projective $R$--module
or, equivalently, that for every prime ideal $x$ of 
$R$ the localized $R_x$--module $M_x$ is free. Since 
localization is a left exact functor, and by Lemma \ref{upperbound} we 
have $(R\mathfrak{m})_x =  R_x\mathfrak{m}$ the sequence
$$
0\longrightarrow R_x\mm\longrightarrow \mathfrak{g}_{R_x}\longrightarrow
M_x\longrightarrow 0
$$
is exact.
By Lemma~\ref{upperbound}(3), 
the elements $$v_1\otimes 1,\ldots, v_m \otimes 1\in
R_{x}\mathfrak{m} \subset \gg \otimes_R R_{x} = \gg_{x}$$ and the module $\gg_{x}$
satisfy the variation of Nakayama's lemma stated in \cite[cor. 1.8]{Lam}.
Hence $R_{x}\mm$ is a direct summand of $\mathfrak{g}$ and this implies that
$M_{x}$ is free.
\end{proof}

\begin{proposition}\label{fibers}  Let $\mm$ be an AD subalgebra of $\gg.$ Then 
%
$Z_\GG(\mm)$ is an affine $R$--group whose geometric fibres  are (connected) reductive groups. 
%
%
%
\end{proposition}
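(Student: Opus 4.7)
The proposition has two essentially independent parts: affineness of $Z_\GG(\mm)$, and reductive–connectedness of its geometric fibres.

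For affineness, the adjoint action of $\GG$ on $\gg$ is $R$-linear, so Remark~\ref{linearity} gives $Z_\GG(\mm) = Z_\GG(R\mm)$. By Proposition~\ref{directsummand}, $R\mm$ is a direct summand of $\gg$. This is precisely the hypothesis under which the proof of Proposition~\ref{centralizer}(2) (via \cite[II, \S2, prop.\,1.4]{DG}) shows that $Z_\GG(R\mm)$ is a closed subgroup scheme of $\GG$, hence affine over $R$.

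For the geometric fibres, fix a geometric point $\bar x \colon \Spec(\bar K) \to \Spec(R)$. Proposition~\ref{centralizer}(1), combined with the isomorphism $R\mm \cong \mm \otimes_k R$ of Lemma~\ref{upperbound}(2), yields
\[
Z_\GG(\mm)_{\bar K} \;=\; Z_{\GG_{\bar K}}\bigl(\mm \otimes_k \bar K\bigr).
\]
For each $h \in \mm$, the $R$-linear endomorphism ${\rm ad}(h)$ of $\gg$ is diagonalized by a $k$-basis with eigenvalues in $k$; hence each joint eigenspace $\gg_\lambda$ ($\lambda \in \mm^*$) is an $R$-submodule, and $\gg = \bigoplus_\lambda \gg_\lambda$ as $R$-modules, a finite direct sum since $\gg$ has finite $R$-rank. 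Base-changing to $\bar K$ preserves this decomposition, so every element of $\mm \otimes_k \bar K$ acts ${\rm ad}$-semisimply on $\gg_{\bar K}$ with eigenvalues in $k \subset \bar K$. Since $\GG$ is semisimple by the standing hypothesis of Theorem~\ref{maincorrespondence}, $\gg_{\bar K}$ is a characteristic zero semisimple Lie algebra, in which ${\rm ad}$-semisimple elements are semisimple.

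Thus $\mm \otimes_k \bar K$ is an abelian subalgebra of $\gg_{\bar K}$ consisting of commuting semisimple elements. In characteristic zero over an algebraically closed field, such a subalgebra is contained in ${\rm Lie}(\bT)$ for some maximal torus $\bT \subset \GG_{\bar K}$; under the root decomposition $\gg_{\bar K} = {\rm Lie}(\bT) \oplus \bigoplus_{\alpha \in \Phi} \gg_\alpha$, the centralizer $Z_{\GG_{\bar K}}(\mm \otimes_k \bar K)$ is the subgroup generated by $\bT$ together with the root subgroups $U_\alpha$ for the roots satisfying $d\alpha|_{\mm \otimes_k \bar K} = 0$. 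Equivalently, this centralizer equals $Z_{\GG_{\bar K}}(\SS)$ for the subtorus $\SS \subset \bT$ cut out by these vanishing conditions, and Proposition~\ref{centralizer}(4) identifies it with a Levi subgroup of $\GG_{\bar K}$---in particular reductive and connected. The main obstacle is precisely this last passage from the commutative family $\mm \otimes_k \bar K$ of semisimple elements to a genuine subtorus with the same centralizer; this step rests on the characteristic zero structure theory of reductive groups.
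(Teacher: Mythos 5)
Your affineness argument coincides with the paper's: Remark~\ref{linearity} gives $Z_\GG(\mm)=Z_\GG(R\mm)$, Proposition~\ref{directsummand} shows $R\mm$ is a direct summand of $\gg$, and \cite[II \S 2 prop.\,1.4]{DG} then yields a closed (hence affine, finite type) $R$--subgroup.

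For the geometric fibres the paper base-changes to an algebraically closed field and then simply cites Steinberg \cite[3.3, 3.8 and 0.2]{[St75]}; you instead try to make the structure explicit. Your reduction is sound: $\mm\otimes_k\ol{K}$ does consist of commuting semisimple elements, sits inside ${\rm Lie}(\bT)$ for a maximal torus $\bT$, and the Lie algebra centralizer is ${\rm Lie}(\bT)\oplus\bigoplus_{d\alpha|_{\mm\otimes\ol K}=0}\gg_\alpha$, which is the Lie algebra of the Levi $Z_{\GG_{\ol K}}(\SS)$ for the subtorus $\SS$ cut out by those roots. What you do not establish is the \emph{connectedness} of $Z_{\GG_{\ol K}}(\mm\otimes_k\ol K)$, i.e.\ the inclusion $Z_{\GG_{\ol K}}(\mm\otimes_k\ol K)\subset Z_{\GG_{\ol K}}(\SS)$: the Lie algebra computation only identifies the two identity components. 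You flag this, but your appeal to Proposition~\ref{centralizer}(4) does not close the gap, because that result concerns $Z_\GG(\SS)=Z_\GG({\rm Lie}\,\SS)$ for a subtorus $\SS$, while $\mm\otimes_k\ol K$ is in general a proper $\ol K$-subspace of ${\rm Lie}(\SS)$, so its centralizer is a priori larger, with possibly more components. The fact you actually need---that in a simply connected semisimple group over an algebraically closed field the centralizer of a commuting family of semisimple Lie algebra elements is connected---is exactly the content of the Steinberg references the paper cites (and of the proof of Proposition~\ref{centralizer}(4) itself, which rests on them). As written, your argument defers rather than resolves the crux, which is the only non-routine part of the statement.
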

\begin{proof}
By Proposition \ref{directsummand} $R\mm$ is a direct summand of $\gg.$ It follows from 
\cite[II prop.1.4]{DG}  that $Z_{\GG}(R\mm) = Z_{\GG}(\mm)$ is a closed subgroup of $\GG$. In particular, $Z_{\GG}(\mm)$ is an affine scheme which is of finite type over $\Spec(R).$

Let $x\in {\rm Spec}(R)$ be a point and let 
$k(\ol{x})$ be an algebraic closure of $k(x).$ Since the functor 
$Z_{\GG}(\mm) =Z_{\GG}(R\mm)$ commutes
with base change, to verify the nature of its geometric fibers  
$Z_{\GG}(\mm)(\ol{x})$ we may look at 
$$
Z_{\GG}(R\mm)\otimes_R k(\ol{x})=
Z_{\GG(\ol{x})}(k(\ol{x})\,\mm(\ol{x}))$$
where $\GG(\ol{x})=\GG\otimes_R k(\ol{x})$ and $\mm(\ol{x})$ is the image of $\mm$ under
$\gg \to \gg \otimes_R k(\ol{x})$.
 Thus we  may assume without loss of generality that the ground ring is a field. By results of Steinberg (\cite[3.3 and 3.8]{[St75]}
 and  \cite[0.2]{[St75]}) we conclude that $Z_{\GG}(\mm)(\ol{x})$ is connected and reductive.  
 \end{proof}

\subsection{Flatness of $Z_{\GG}(\mm)$.}
Fix a split Cartan subalgebra $\dh$ of $\dg$. With respect to the adjoint
representation ${\rm ad}: \,\dg\to {\rm End}_k\,(\dg)$
we have the weight space decomposition  
$$\dg=\oplus_{\alpha\in \Sigma} \,\dg_{\alpha}$$ where $\alpha:\dh \to k$ is a linear function  such that the corresponding eigenspace $\dg_\alpha$ is non-zero. The kernel of the adjoint representation of $\dg$ is trivial, ${\rm dim}\,\dg_{\alpha}=1$
if $\alpha\not=0$ and $\dg_0=\dh$.
\begin{lemma}\label{centralizer2}
Let $\da \subset \dh$ be a subalgebra. Then:

{\rm (1)}  The centralizer $Z_{\dg}(\da)$ is a reductive
Lie algebra whose centre is contained in $\dh$.

{\rm (2)} If $a\in \da$ is in  generic position then
$Z_{\dg}(\da)=Z_{\dg}(a)$.
\end{lemma}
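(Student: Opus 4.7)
The plan is to exploit the root space decomposition of $\dg$ relative to the split Cartan $\dh$. Write
\[
\dg = \dh \oplus \bigoplus_{\alpha \in \Sigma} \dg_\alpha,
\]
so that for any $a \in \dh$ and any $x = h + \sum_\alpha x_\alpha$ with $h \in \dh$ and $x_\alpha \in \dg_\alpha$, we have $[a,x] = \sum_{\alpha} \alpha(a)\, x_\alpha$. Since $\da \subset \dh$ and $\dh$ is abelian, $\dh \subset Z_\dg(\da)$; moreover the computation just made shows that
\[
Z_\dg(\da) = \dh \oplus \bigoplus_{\alpha \in \Sigma_\da} \dg_\alpha,
\qquad
\Sigma_\da := \{ \alpha \in \Sigma : \alpha|_\da = 0 \}.
\]

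For (1): the subset $\Sigma_\da \subset \Sigma$ is obviously symmetric and closed under addition whenever the sum lies in $\Sigma$, hence it is a (closed symmetric) subsystem of the root system $\Sigma$. Consequently $Z_\dg(\da)$ is a Levi-type subalgebra: its derived algebra is the semisimple subalgebra with root system $\Sigma_\da$ and Cartan subalgebra equal to the $k$--span of the coroots $\{h_\alpha\}_{\alpha \in \Sigma_\da}$, and $Z_\dg(\da)$ decomposes as the sum of this semisimple part with the common kernel
\[
\zz := \{\, h \in \dh : \alpha(h) = 0 \ \forall\, \alpha \in \Sigma_\da \,\} \subset \dh.
\]
Thus $Z_\dg(\da)$ is reductive and its centre is precisely $\zz \subset \dh$, giving both assertions in (1). (Alternatively, one can invoke the fact that centralizers of tori in reductive groups are reductive, applied to the torus of $\dG$ whose Lie algebra is $\da$.)

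For (2): an element $a\in \da$ is said to be in generic position when $\alpha(a)\neq 0$ for every root $\alpha\in \Sigma\setminus \Sigma_\da$. Such $a$ exist: each condition $\alpha(a)=0$ with $\alpha\notin \Sigma_\da$ defines a proper hyperplane of $\da$, and since $k$ has characteristic zero (in particular is infinite) and $\Sigma$ is finite, the complement of the union of these hyperplanes is non-empty. For such an $a$, a computation identical to the one above gives
\[
Z_\dg(a) = \dh \oplus \bigoplus_{\alpha\in \Sigma,\ \alpha(a)=0} \dg_\alpha
        = \dh \oplus \bigoplus_{\alpha\in \Sigma_\da} \dg_\alpha
        = Z_\dg(\da).
\]

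The only mildly delicate point is the reductivity claim in (1); everything else is bookkeeping with root spaces. The cleanest justification of reductivity is either the Levi-subalgebra viewpoint above, or, if one prefers a group-theoretic argument, Lemma \ref{facts}(2) applied to the subtorus of $\dG$ with Lie algebra $\da$, combined with Proposition \ref{centralizer}(4).
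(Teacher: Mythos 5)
Your proof is correct and follows the same route as the paper: both use the root space decomposition relative to $\dh$ to identify $Z_{\dg}(\da)$ as $\dh$ plus the root spaces $\dg_\alpha$ with $\alpha|_{\da}=0$, and both reduce (2) to the observation that genericity of $a$ makes the vanishing conditions $\alpha(a)=0$ and $\alpha|_{\da}=0$ coincide. You merely flesh out the paper's appeal to a "well-known fact" by giving the Levi-subalgebra decomposition and the hyperplane argument for the existence of generic elements.
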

\begin{proof} 
(1) The centralizer of $\da$ is generated by $\dh$ and those
$\dg_{\alpha}$ for which $\alpha(x)=0$ for every $x\in \da$. It is a well-known fact that this algebra is reductive.

(2) The inclusion $\subset$ is obvious. Conversely, the centralizer of $a$ is generated
by $\dh$ and those $\dg_{\alpha}$ for which $\alpha(a)=0$.
Since $a$ is generic all such roots $\alpha$ also satisfy $\alpha(x)=0$ for all $x\in\da$.
\end{proof}

\begin{lemma}\label{uniqueness} Let $a_{\alpha}\in k,\ \alpha\in \Sigma$. 
Then there exists at most one element $h\in \dh$
such that $\alpha(h)=a_{\alpha}$.
\end{lemma}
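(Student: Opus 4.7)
The statement reduces to the injectivity assertion: if $h, h' \in \dh$ satisfy $\alpha(h) = \alpha(h')$ for every root $\alpha \in \Sigma$, then $h = h'$. My plan is to set $h_0 := h - h'$ and show that $h_0 = 0$ under the hypothesis $\alpha(h_0) = 0$ for all $\alpha \in \Sigma$.

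The cleanest route is via the adjoint action. Using the weight space decomposition $\dg = \dh \oplus \bigoplus_{\alpha \in \Sigma \setminus \{0\}} \dg_\alpha$ recalled just before the lemma, for any $x_\alpha \in \dg_\alpha$ we have $[h_0, x_\alpha] = \alpha(h_0)\, x_\alpha = 0$. Since $\dh$ is abelian, $h_0$ also commutes with $\dh$. Hence $h_0$ commutes with every weight space, so $h_0$ lies in the centre of $\dg$. But $\dg$ is semisimple, so its centre is trivial, forcing $h_0 = 0$.

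Alternatively, one can phrase the argument purely at the level of root systems: the author notes that the kernel of the adjoint representation is trivial, which is equivalent to the statement that the non-zero roots span $\dh^*$ as a $k$-vector space. Once this spanning property is known, the condition $\alpha(h_0) = 0$ for all $\alpha \in \Sigma$ forces $\lambda(h_0) = 0$ for every $\lambda \in \dh^*$, and hence $h_0 = 0$.

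There is no real obstacle; both formulations are one-line consequences of the semisimplicity of $\dg$ together with the weight decomposition already in hand. I would favour the first formulation since it uses exactly the data displayed in the paragraph preceding the lemma (the decomposition, the fact that the kernel of $\operatorname{ad}$ is trivial, and $\dg_0 = \dh$), and so fits most naturally into the flow of the text.
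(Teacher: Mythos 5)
Your proof is correct and takes essentially the same approach as the paper: the paper's one-line proof invokes the triviality of the kernel of the adjoint representation, and you simply spell out that $\alpha(h-h')=0$ for all $\alpha\in\Sigma$ forces $\operatorname{ad}(h-h')=0$, hence $h-h'$ lies in the centre of $\dg$, which is trivial by semisimplicity.
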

\begin{proof} Since the kernel of the adjoint representation
of $\dg$ is trivial the result follows.
\end{proof}
\begin{lemma}\label{Srationality}
Let $S$ be an object of $\kalg$. Let $v\in\dh \otimes_k S$ be an ad $k$--diagonalizable element of $\dg \otimes_k S.$ If $S$ is an integral domain then
$v\in \dh$.
\end{lemma}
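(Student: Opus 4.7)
The plan is to reduce the statement to showing $\alpha(v) \in k$ for every root $\alpha \in \Sigma$, where $\alpha$ is extended $S$--linearly to $\dh \otimes_k S \to S$, and then to use that the simple roots form a basis of $\dh^*$ to pin down the coefficients of $v$ in a dual basis of $\dh$.

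First I would exploit the root space decomposition $\dg = \dh \oplus \bigoplus_{\alpha \in \Sigma} \dg_\alpha$, which gives $\dg \otimes_k S = (\dh \otimes_k S) \oplus \bigoplus_\alpha (\dg_\alpha \otimes_k S)$. Pick a nonzero $e_\alpha \in \dg_\alpha$. Since $v \in \dh \otimes_k S$ is a sum $\sum h_i \otimes s_i$, a direct calculation gives $[v, e_\alpha \otimes 1] = \alpha(v)\,(e_\alpha \otimes 1)$ where $\alpha(v) := \sum \alpha(h_i) s_i \in S$.

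Next I would use the hypothesis that $v$ is ad $k$--diagonalizable, which by the definition recalled in the paper means $\dg \otimes_k S = \bigoplus_{\lambda \in k} V_\lambda$ as a $k$--space, with $V_\lambda = \ker(\mathrm{ad}(v) - \lambda \cdot \mathrm{id})$. Write $e_\alpha \otimes 1 = \sum_{\lambda \in k} x_\lambda$ as a finite sum with $x_\lambda \in V_\lambda$. Multiplication by $\alpha(v) \in S$ preserves each $V_\lambda$ (since $[v,-]$ is $S$--linear), so comparing the two expressions of $[v, e_\alpha \otimes 1]$ yields $(\lambda - \alpha(v))\, x_\lambda = 0$ in $\dg \otimes_k S$ for every $\lambda$. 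Now $\dg \otimes_k S$ is a free $S$--module, hence torsion-free, and $S$ is an integral domain; so for each $\lambda$ either $x_\lambda = 0$ or $\alpha(v) = \lambda$ in $S$. Since $e_\alpha \otimes 1 \neq 0$ some $x_\lambda$ is nonzero, forcing $\alpha(v) \in k$.

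Finally I would conclude by choosing a set of simple roots $\alpha_1,\dots,\alpha_\ell$, which forms a basis of $\dh^*$, and letting $\{h_1,\dots,h_\ell\} \subset \dh$ be the dual basis characterized by $\alpha_i(h_j) = \delta_{ij}$. Writing $v = \sum_i h_i \otimes s_i$ with $s_i \in S$, the previous step gives $s_j = \alpha_j(v) \in k$ for every $j$, whence $v \in \dh \otimes_k k = \dh$.

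The only slightly subtle point, which I do not expect to be a genuine obstacle, is the fact that the $S$--linear operator ``multiplication by $\alpha(v)$'' on $\dg \otimes_k S$ preserves the $k$--eigenspaces $V_\lambda$ of $\mathrm{ad}(v)$; this holds because $\mathrm{ad}(v)$ is itself $S$--linear (as $v$ is an element of the $S$--Lie algebra $\dg \otimes_k S$) and hence commutes with scalar multiplication by elements of $S$. Everything else is a direct consequence of the root space decomposition and the torsion-freeness coming from $S$ being a domain.
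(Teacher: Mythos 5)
Your proof is correct, and it shares the central insight with the paper's proof: establish that $\alpha(v)\in k$ for every root $\alpha$, then use the fact that the roots separate points of $\dh$ to force $v\in\dh$. Where you differ is in execution. The paper passes to the fraction field $F$ of $S$, regards $v$ as an element of $\dg\otimes_k F$, observes that the eigenvalues of $\mathrm{ad}(v)$ are the $\alpha(v)$, and then invokes the linear-system argument from the proof of Lemma~\ref{upperbound}(2) (solvability over $k$) together with the uniqueness Lemma~\ref{uniqueness}. You instead work directly over $S$ without taking fractions: you exploit that each $k$-eigenspace $V_\lambda$ of $\mathrm{ad}(v)$ is an $S$-submodule, compare the two decompositions of $[v,e_\alpha\otimes 1]$ to get $(\lambda-\alpha(v))x_\lambda=0$, and use torsion-freeness of $\dg\otimes_k S$ over the domain $S$ to conclude $\alpha(v)\in k$. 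You then read off the coordinates of $v$ in the basis dual to the simple roots rather than appealing to the auxiliary lemmas. The paper's version is shorter because it reuses machinery already set up; yours is more self-contained and makes the role of the domain hypothesis (torsion-freeness) completely explicit, which is exactly the hypothesis the remark following the lemma shows cannot be dropped. Both proofs are sound.
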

\begin{proof} Let $F$ be a field of quotients of $S$  and view $v$ as an element of  $\dg \otimes_k F.$ The eigenvalues $a_{\alpha}$ of $v$ with respect to the adjoint representation
are $a_{\alpha}=\alpha(v)$. By assumption they all belong to k. Thus the nonhomogeneous linear system 
$\alpha(x)=a_{\alpha}$, $\alpha\in \Sigma$, has a solution over $F$, namely $v$. Since the coefficients of this system of equations are in $k$
it also has a solution over $k$ [see the proof of Lemma \ref{upperbound}(2)]. By Lemma~\ref{uniqueness} such a solution is unique, hence
$v\in\dh$.
\end{proof}

Fix an arbitrary element $h\in \dh$. Recall that $\dG$ acts
on $\dg$ by conjugation and it is known that the orbit $\mathcal{O}_h=\dG\cdot h$
is a Zariski closed subset of $\dg$ (because $h$ is semisimple). Let $\bL\subset \dG$ 
be the isotropy subgroup of $h$ in $\dG$. 
As we saw above $\bL$
is a reductive subgroup and we have an exact sequence
$$
1\longrightarrow \bL\longrightarrow \dG\stackrel{\phi}{\longrightarrow} \dG/\bL\longrightarrow 1.
$$


The algebraic $k$--varieties $\mathcal{O}_h$ and $\dG/\bL$ have the distinguished points $h$
and  the coset $e=1\cdot \bL$ respectively.
The group $\dG$ acts on both $\mathcal{O}_h$ and $\dG/\bL$ in a natural way and 
there exists a natural
$\dG$-equivariant isomorphism $\lambda: \mathcal{O}_h \simeq \dG/\bL$ which takes $h$ 
into $e$ (see \cite[III \S9]{Bor} for details).
Hence if $R$ is an object in $\kalg$ and $x\in \mathcal{O}_h(R),$ then $x$ and $h$
are conjugate by an element in $\dG(R)$ if and only if
$\lambda(x)\in \dG(R)\cdot e$. 
\smallskip

We now return to our simply connected semisimple $R$--group $\GG$
and its Lie algebra $\gg$. 

\begin{lemma}\label{ADflatness}
Let $\mm$ be an AD subalgebra of $\gg.$ The affine scheme $Z_{\GG}(\mm)$  is flat over ${\rm Spec}(R)$.
\end{lemma}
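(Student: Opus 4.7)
The plan is to prove flatness by reducing to the split case, then picking a generic element $a \in \mm$ so that $Z_{\GG}(\mm) = Z_{\GG}(a)$, and finally exhibiting $Z_{\GG}(a)$ as a smooth inner twist of a Levi subgroup of $\dG$ via the orbit identification $\mathcal{O}_h \cong \dG/\bL$ set up immediately prior to this lemma.

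First, by Lemma~\ref{normalsplit}, an étale cover of $\Spec(R)$ splits $\GG$; since flatness descends along faithfully flat morphisms, I may assume $\GG = \dG_R$. The AD hypothesis together with the $R$-linearity of each $\ad(v)$ yields a joint eigenspace decomposition $\gg = \bigoplus_{\lambda \in \Lambda} \gg_\lambda$ as $R$-modules, with $\Lambda \subset \mm^*$ finite and each $\gg_\lambda$ an $R$-direct summand. Since $k$ is infinite I can pick $a \in \mm$ with $\lambda(a) \neq \mu(a)$ for $\lambda \neq \mu$ in $\Lambda$; then $Z_\gg(a) = \gg_0 = Z_\gg(\mm)$. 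The eigenvalues $\{\lambda(a)\}$ of $\ad(a)$ on $\gg$ are in $k$ and independent of the fiber, so by Lemma~\ref{uniqueness} there is a unique $h \in \dh$ whose adjoint action on $\dg$ has these same eigenvalues. At every geometric fiber $\bar{x}$, the element $a_{\bar{x}}$ is semisimple and has the same characteristic polynomial as $h$ on $\dg$, hence sits in $\mathcal{O}_h(\overline{k(\bar{x})})$; as $\mathcal{O}_h$ is defined over $k$, in fact $a_{\bar{x}} \in \mathcal{O}_h(k(\bar{x}))$. Thus $a$ is an $R$-point of $\mathcal{O}_h \otimes_k R \cong (\dG/\bL)_R$, where $\bL = Z_{\dG}(h)$.

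The quotient morphism $\pi \colon \dG_R \to (\dG/\bL)_R$ is a smooth $\bL_R$-torsor, so the fiber $\pi^{-1}(a)$ is a smooth $\bL_R$-torsor over $\Spec(R)$; a direct computation shows it also carries a simply transitive left action of $Z_{\GG}(a)$. Hence $Z_{\GG}(a)$ is étale-locally isomorphic to $\pi^{-1}(a)$ and therefore smooth (in particular flat) over $R$. Next, at each geometric fiber one has $Z_{\GG_{\bar{x}}}(\mm_{\bar{x}}) = Z_{\GG_{\bar{x}}}(a_{\bar{x}})$ by the field-level content of Lemma~\ref{centralizer2}(2), combined with Steinberg's theorem that centralizers of semisimple elements in a simply connected group are connected. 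So the closed immersion $Z_{\GG}(\mm) \hookrightarrow Z_{\GG}(a)$ is a bijection on geometric points; as $Z_{\GG}(a)$ is smooth (in particular reduced), the defining ideal of $Z_{\GG}(\mm)$ inside $Z_{\GG}(a)$ must be zero. This forces $Z_{\GG}(\mm) = Z_{\GG}(a)$, and flatness follows.

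The main technical hurdle is this last step, in which the fiber-wise equality of $Z_{\GG}(\mm)$ and $Z_{\GG}(a)$ is promoted to a scheme-theoretic equality. The argument relies on the reducedness of $Z_{\GG}(a)$ (inherited from its smoothness as a torsor for the smooth $\bL_R$) together with the elementary fact that a closed subscheme of a reduced scheme containing all its points equals the ambient scheme.
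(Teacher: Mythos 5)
The overall architecture (split étale-locally, find a generic element $a\in\mm$, hit it with the orbit identification $\mathcal{O}_h\cong\dG/\bL$, conclude via a torsor argument and reducedness) is reasonable, but the step connecting $a$ to the orbit $\mathcal{O}_h$ has a genuine gap.

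You claim that $a_{\bar x}$ is semisimple with the same characteristic polynomial as $h$ under $\ad$, "hence sits in $\mathcal{O}_h(\overline{k(\bar x)})$.'' That implication is false: two semisimple elements of a simple Lie algebra can have identical adjoint characteristic polynomials without lying in the same $\dG$-orbit. In $\mathfrak{sl}_3$, for instance, $h_1=\diag(2,-1,-1)$ and $h_2=\diag(1,1,-2)$ have the same adjoint characteristic polynomial (both $\ad$-spectra are $\{\pm 3,\pm 3, 0,0,0,0\}$) yet are not $\SL_3$-conjugate (they are only related by an outer automorphism). Relatedly, your appeal to Lemma~\ref{uniqueness} is imprecise: that lemma concerns a family $\{a_\alpha\}$ \emph{indexed by roots}, not the unordered multiset of eigenvalues, and uniqueness fails for the multiset as the example shows. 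So the fiberwise argument cannot produce the factorization $a\in\mathcal{O}_h(R)$, and everything downstream --- the torsor structure on $\pi^{-1}(a)$ and the reducedness conclusion $Z_\GG(\mm)=Z_\GG(a)$ --- is left unsupported.

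The paper plugs this gap differently, and the difference is the whole content. Using that $\mm$ is an AD subalgebra, one shows (via \cite{P1}, see the proof of Lemma~\ref{upperbound}) that over the fraction field $K$ the element $v$ (your $a$) lies inside a \emph{split Cartan subalgebra} $\mathcal{H}$ of $\dg_K$. Conjugating $\mathcal{H}$ to $\dh_K$ produces $h\in\dh$ with the prescribed root-indexed eigenvalue data and simultaneously exhibits the $\dG(K)$-conjugacy of $v$ and $h$ (Sublemma~\ref{reduction-to-h}). Only \emph{then} does one conclude $v\in\mathcal{O}_h(R)$: the morphism $\phi_v:\Spec(R)\to\dg$ lands in the closed subvariety $\mathcal{O}_h$ at the generic point, and since $R$ is a (local, normal, noetherian) domain, $\Spec(R)$ is irreducible and reduced, so $\phi_v$ factors through $\mathcal{O}_{h}$ scheme-theoretically. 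Your proof never invokes the split-Cartan containment and tries to do the work fiberwise with characteristic polynomials, which is not enough.

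If you repair the argument by establishing $a\in\mathcal{O}_h(R)$ via the generic point as above, the rest of your route is valid and actually somewhat slicker than the paper's: identifying $\pi^{-1}(a)$ as a simultaneous right $\bL_R$-torsor and left $Z_\GG(a)$-torsor gives smoothness of $Z_\GG(a)$ directly, and the closed-immersion-plus-reducedness argument then forces $Z_\GG(\mm)=Z_\GG(a)$; this avoids the paper's excursion through the strict henselization $R^{sh}$, the lift of $v$ to a $\dG(R')$-conjugate of $h$, and the further reduction to $\mm\subset\dh$.
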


\begin{proof} That $Z_{\GG}(\mm)$ is an affine scheme over $R$ has already been established. 
Since flatness is a local property it will suffice to establish the result after we replace $R$ by its localization at each element of $\XX.$ Lemma \ref{normalsplit} provides
a finite \'etale connected cover $\widetilde{R}/R$ which splits $\GG$. 
By replacing $R$ by $\widetilde{R}$ we reduce the problem to the split case. Summarizing, 
without loss of generality we may assume that  $\GG=\dG \times_k R$,
$\gg= \dg \otimes_k R := \dg_R$ and $R$ is a local domain. 

As observed in Lemma~\ref{upperbound}   $\mm$ is contained in a split 
Cartan subalgebra $\mathcal{H}$ 
of $\dg \otimes_k K := \dg_K$. Fix a generic vector $v\in \mm\subset \dg_K$.
Let $\{\,a_{\alpha},\,\alpha\in\Sigma\,\}$ be the family of all eigenvalues of $v$ with respect to
the adjoint representation of $\dg_K$. Since $\mm$ is  an AD subalgebra of $\dg_R$,
 we have $a_{\alpha}\in k$ for every
$\alpha\in\Sigma$. 

\begin{sublemma}\label{reduction-to-h}  
There exists a unique vector $h\in\dh$ whose eigenvalues with respect to
the adjoint representation are $\{\,a_{\alpha},\,\alpha\in\Sigma\,\}$. Moreover if $v$ and $h$ 
are viewed as elements of $\dg_K$, then they are conjugate under $\dG(K).$
\end{sublemma}

\begin{proof} Uniqueness follows from Lemma~\ref{uniqueness}. As for existence, we note 
that $\mathcal{H}$ and ${\dh}_K$
are conjugate over $K$, hence ${\dh}_K$ clearly contains an element
with the prescribed property. 
By Lemma~\ref{Srationality} this element is contained in $\dh$. The conjugacy assertion 
follows from the construction  of $h.$
\end{proof}

We now come back to  the $\dG$-orbit $\mathcal{O}_h$ of $h$.
We remind  the reader that this  is a closed subvariety of $\dg$.

\begin{sublemma}\label{R-points} $v\in \mathcal{O}_h(R)$.
\end{sublemma}
\begin{proof} The element $v\in \dg_R$ can be viewed as a morphism
$$\phi_v:{\rm Spec}\,(R)\to \dg.$$  
The image of the generic point ${\rm Spec}(K)
\to {\rm Spec}(R)\to \dg$ is contained in $\mathcal{O}_h$ for 
$v$ and $h$ are conjugate over $K$.
 Since $\mathcal{O}_h$ is a closed subvariety of $\dg$ and since ${\rm Spec}(R)$
 is irreducible  it follows 
 that $\phi_v$ factors through the embedding
 $\mathcal{O}_h\hookrightarrow  \mathfrak{g}$.
\end{proof}

 To finish the proof of 
Lemma \ref{ADflatness} we first consider the particular case when $\mm$
 is contained in $\dh$. Then $Z_{\GG}(\mm)$ is obtained from the variety $Z_{\dG}(\mm)$ by the base
 change $R/k$ so that flatness is clear. 
  
In the general case, let $h\in\dh$ be the element provided by
Sublemma~\ref{reduction-to-h}.
By Sublemma~\ref{R-points} we have $v\in\mathcal{O}_h(R)= (\dG/\bL)(R)$.
Denote by $R^{sh}$ the strict henselisation of the local ring $R$, that is the
simply connected cover of $R$ attached to a separable closure $K_s$ of $K$
(see  \cite[\S X.2 ]{Ra}).
Since the map $p: \dG \to \dG/\bL$ is smooth and surjective,  Hensel's lemma \cite[\S 4]{M1}
shows that    $\dG(R^{sh}) \to (\dG/\bL)(R^{sh})$ is surjective.
But $R^{sh}$ is the inductive limit of the finite (connected)  Galois  covers of $R$, 
so there exists one such  cover $R'$ and a point $g' \in \dG(R')$ such that 
$v= g'.h$. Up to replacing $R$ by $R'$ (which is a noetherian normal domain) 
we may assume that $v=h$. 
 
We now recall  that $Z_{{\dg}_R}(h)=Z_{{\dg}_R}(\mm)$ since  $h=v\in \mm$ is a generic vector.
Since the center of $Z_{{\dg}_R}(h)$ is contained in $\dh_R$ and since
$\mm$ is contained in the center of its centralizer we have $\mm\subset \dh_R$.
Applying Lemma~\ref{Srationality} then shows that $\mm\subset \dh$.
Thus we have reduced the general case to the previous one.
\end{proof}

 
 \begin{proposition}\label{flatreductive} If $\mm$ is an AD subalgebra of $\gg$ then $Z_{\GG}(\mm)$ 
 is a reductive $R$--group. 
 \end{proposition}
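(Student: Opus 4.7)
The plan is to assemble the Proposition from the three pieces already in place: affineness and finite type of $Z_\GG(\mm)$, flatness, and the geometric fiber description. The missing ingredient to reach the SGA3 notion of reductivity is smoothness, which in characteristic zero comes for free from these.

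First I would recall that $Z_\GG(\mm)$ is a closed subgroup of the affine $R$-group $\GG$, so in particular affine and of finite presentation over $R$; this was already observed in the proof of Proposition \ref{fibers} via Proposition \ref{directsummand} and \cite[II \S 2 prop.\,1.4]{DG}. Next, Proposition \ref{fibers} tells us that the geometric fibers $Z_\GG(\mm)\otimes_R k(\overline{x})$ are connected reductive groups; and Lemma \ref{ADflatness} gives flatness over $\Spec(R)$.

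The key step is then to promote flatness to smoothness. Since we are in characteristic zero, Cartier's theorem \cite[II.6.1.1]{DG} ensures that the geometric fibers of $Z_\GG(\mm)$, being algebraic groups over fields of characteristic zero, are automatically smooth. Combining this with flatness and finite presentation, the fiber-by-fiber smoothness criterion \cite[17.8.2]{EGA4} yields that $Z_\GG(\mm)$ is smooth over $\Spec(R)$. (This is exactly the mechanism already invoked in the footnote of \S\ref{terminology}.)

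At this point $Z_\GG(\mm)$ is an affine, smooth $R$-group whose geometric fibers are connected reductive; by the definition adopted in \cite{SGA3} this is precisely what it means to be a reductive $R$-group, completing the proof. There is no real obstacle here: all the heavy lifting was done in Propositions \ref{directsummand}, \ref{fibers} and Lemma \ref{ADflatness}, and the only step beyond assembling them is the passage from flatness to smoothness, which is automatic in characteristic zero.
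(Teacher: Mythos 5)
Your argument is correct and is essentially the same as the paper's: assemble affineness and finite presentation, flatness from Lemma \ref{ADflatness}, and the geometric fiber description from Proposition \ref{fibers}, then pass from flatness to smoothness via the fiberwise criterion (automatic in characteristic zero). The paper phrases the smoothness step as the ``differential criteria for smoothness'' but the mechanism is identical to what you describe.
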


 \begin{proof} Since   $Z_{\GG}(\mm)$ is flat and also finitely presented over $R$  the 
 differential criteria for smoothness shows that  $Z_{\GG}(\mm)$ is in fact smooth over 
 $R$ because of 
Proposition \ref{fibers}.
 Furthermore, geometric fibers of $Z_{\GG}(\mm)$ are (connected) 
 reductive groups in the usual sense (this last again by Proposition \ref{fibers}). 
 By definition $Z_{\GG}(\mm)$ is a reductive $R$--group.
 \end{proof}


\noindent{\it Proof of Theorem \ref{maincorrespondence}.} 
(1) Let  $\mm$ be a MAD subalgebra of $\gg,$ and let $\SS$ denote the maximal 
split torus of the radical 
$\TT$ of the reductive $R$-group $Z_\GG(\mm).$ By Remark~\ref{m(S)} 
the Lie algebra of $\SS$ contains a 
unique maximal subalgebra $\mm(\SS)$ which is an AD--subalgebra of $\gg$.
By definition
$\SS < \HH=Z_{\GG}(R\mm)$. Denote  ${\rm Lie}(\SS)$ by $\ss.$ 
Since $\ss \subset \SS(R[\varepsilon])$ it follows that
in $\gg$ we have $[ \ss,R\mm]=0.$ In particular since
$\mm\,(\SS)\subset \ss$ we have
$[\,\mm(\SS),\mm\,]=0.$ But then by Remark \ref{m(S)}  $\mm+\mm(\SS)$ is an 
AD subalgebra of $\gg$.
Since $\mm$ is a MAD subalgebra we necessarily have $\mm(\SS)\subset \mm$ and now we 
are going to show
that $\mm(\SS)=\mm$.

Recall that $K$ denotes the quotient field of $R.$  By Lemma~\ref{splittorus} we have 
${\rm dim}\,(\mm(\SS))=
{\rm rank}\,(\SS)$, so that to establish that $\mm(\SS) = \mm$ it will suffice to 
show that ${\rm rank}\,(\SS)
\geq {\rm dim}_k(\mm),$ or  equivalently that ${\rm dim}_K({\SS}_K)
\geq {\rm dim}_k(\mm)$. 

We have ${\HH}_K=Z_{{\GG}_K}(R\mm) = Z_{{\GG}_K}(K\mm)$, as can be seen from 
the fact that the computation of the centralizer commutes with base change.
Since $\SS$ is the maximal split
torus  of $\TT$ then ${\SS}_K$ is the maximal split torus of
${\TT}_K={\rm rad}\,({\HH}_K)$ by Lemma \ref{maxtor}.
We also have
$${\rm Lie}\,({\HH}_K)={\rm Lie}\,(Z_{{\GG}_K}(R\mm))=
{\rm Lie}\,(Z_{{\GG}_K}(K\mm))
=Z_{{\gg}_K}(K\mm).$$ 
Since $K\mm$ is in the centre of $Z_{{\gg}_K}(K\mm)={\rm Lie}\,({\HH}_K)$
and the centre of ${\rm Lie}\,({\HH}_K)$ coincides with
${\rm Lie}\,({\TT}_K)$ 
we conclude that $K\mm\subset
{\rm Lie}\,({\TT}_K)$. On the other hand $K\mm$ is an AD subalgebra of
${\gg}_K$, so that by Lemma~\ref{splitradical} $K\mm\subset {\rm Lie}\,({\SS}_K)$.
This shows that ${\rm dim}_K(K\mm)\leq {\rm dim}_K({\SS}_K)$.
But by
Lemma~\ref{upperbound}(3) we have ${\rm dim}_k(\mm)={\rm dim}_K(K\mm)$. This
completes the proof that $\mm(\SS)=\mm$.

Now it is easy to finish the proof that $\SS$ is a maximal split torus in $\GG$. 
If $\SS$ is contained in a split torus $\SS'$ of larger rank then $\mm(\SS)\subset \mm(\SS')$ 
is a proper
subalgebra which contradicts to the fact that $\mm=\mm(\SS)$ is a MAD subalgebra.


(2) Let $\SS$ be a maximal split torus
of $\GG$, and let $\ss={\rm Lie}\,(\SS)$ be its Lie algebra.
By Remark~\ref{m(S)} $\ss$ contains a unique maximal subalgebra
$\mm(\SS)=\mm$ which is an AD--subalgebra of $\gg$.  We have by Lemma~\ref{splittorus} that $R\mm=
{\rm Lie}\,(\SS)$. Thus, appealing to Proposition \ref{centralizer} and  Lemma \ref{upperbound}(1) we obtain
$$
Z_{\GG}(\mm) = 
Z_{\GG}(R\mm)=Z_{\GG}(\ss)=Z_{\GG}(\SS).
$$
 We claim
that $\mm$ is maximal. 
Assume otherwise. Then by Lemma \ref{upperbound}(1) $\mm$ is properly included
in a MAD subalgebra ${\mm}'$ of $\gg$. We have
$$
{\HH}':=Z_{\GG}(R\,{\mm}')\subset
\HH:=Z_{\GG}(R\mm)= Z_{\GG}(\SS).
$$
By Proposition \ref{flatreductive} $\HH'$ and $\HH$ are  reductive $R$--groups. Let $\TT'$ and $\TT$ be their radicals
and let $\TT'_d$, $\TT_d$ be their maximal split tori.
We  have $\SS\subset \TT\subset \TT'$ and hence $\SS\subset \TT_d\subset \TT'_d$. But
$\SS$ is a maximal split torus in $\GG$. Therefore $\SS=\TT'_d=\TT_d$ and this implies
$\mm=\mm(\SS)=\mm(\TT_d)=\mm(\TT'_d)$. Recall that in part (1) we showed 
that $\mm(\TT'_d)=\mm'$ and thus $\mm=\mm'$ -- a contradiction.

(3) If $\mm$ is a MAD subalgebra of $\gg,$ the corresponding maximal split
torus $\SS(\mm)$ is the maximal split torus of the radical of ${\HH}=Z_{\GG}(R\mm)$.
The  proof of (1) shows that the MAD subalgebra  corresponding to $\SS(\mm)$
is 
$\mm$.

Conversely, if $\SS$ is a maximal split torus of $\GG$
then the maximal split torus corresponding to $\mm(\SS)$
is the maximal split torus of the radical of the reductive group
$Z_{\GG}(R\mm(\SS))=Z_{\GG}(\ss)=Z_{\GG}(\SS)$ as explained in the proof
of (1). Clearly $\SS$ is inside the radical of $Z_{\GG}(\SS)$.
Since $\SS$ is maximal split in $\GG$ it is maximal split in the radical
of $Z_{\GG}(\SS)$. Thus $\SS={\SS}'$. 

(4) Follows from the construction and functoriality in the definition of the adjoint action at the Lie algebra and group level. \qed

\section{A sufficient condition for conjugacy}

In this section $R$ denotes a normal noetherian domain and $K$ its field of quotients.
Let $\GG$ be a reductive group scheme over
$R$. We say that a maximal split torus $\SS$ of $\GG$ is {\it generically maximal split} if
${\SS}_K$ is a maximal split torus of ${\GG}_K.$ 

\begin{proposition}\label{obstacle}
Let $\SS$ be a generically maximal
split torus of $\GG$. If
\begin{equation}\label{Hzar}
H^1_{{\it Zar}}\big(R, Z_{\GG}(\SS)\big)=1
\end{equation}
then all generically maximal split tori
of $\GG$ are conjugate under $\GG(R)$.
\end{proposition}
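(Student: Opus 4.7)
The plan is to mirror the proof of Proposition \ref{conjugacymaximaltori}: translate the conjugacy question into the triviality of a transporter torsor, establish Zariski-local triviality by a direct parabolic argument, and then kill the resulting class using the hypothesis (\ref{Hzar}).

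\medskip

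\noindent\textbf{Transporter and local conjugacy.} Let $\SS'$ be a second generically maximal split torus, and form the transporter
$$\EE := {\rm \bf Trans}_\GG(\SS,\SS').$$
By \cite[XXII, 5.3.9 and 5.3.11]{SGA3} this is representable by a closed $R$-subscheme of $\GG$ and carries the structure of a right torsor under $\NN := N_\GG(\SS)$; an $R$-point of $\EE$ is exactly a $g \in \GG(R)$ with $g\SS g^{-1} = \SS'$. The central and hardest step is to show $\EE$ is Zariski-locally trivial on $\Spec(R)$. Fix a prime $\mathfrak{p}$ of $R$; I would produce an $R_\mathfrak{p}$-point. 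Since the character groups of the split tori $\SS, \SS'$ are free and finitely generated, and are determined by their generic fibers, generic maximality forces $\SS_{R_\mathfrak{p}}$ and $\SS'_{R_\mathfrak{p}}$ to be maximal split in $\GG_{R_\mathfrak{p}}$. By Lemma \ref{facts}(3), $Z_\GG(\SS)$ and $Z_\GG(\SS')$ are Levi factors of parabolic subgroups $\PP, \PP'$ of $\GG_{R_\mathfrak{p}}$; Proposition \ref{flag} combined with the maximality shows $\PP$ and $\PP'$ are minimal. The generic fibers $\PP_K, \PP'_K$ are minimal parabolics of the connected reductive $\GG_K$, so share a common type $\mathbf{t}$, and by connectedness of $\Spec(R_\mathfrak{p})$ this remains their type over $R_\mathfrak{p}$. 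Applying the local-ring conjugacy \cite[XXVI, 5.5]{SGA3} to same-type parabolics over $R_\mathfrak{p}$ furnishes an element of $\GG(R_\mathfrak{p})$ conjugating $\PP$ onto $\PP'$, and a further conjugation inside $\PP'$ by its unipotent radical via \cite[XXVI, 1.8]{SGA3} identifies $Z_\GG(\SS)$ with $Z_\GG(\SS')$. Since each of $\SS, \SS'$ is the unique maximal split subtorus of the radical of its centralizer (Lemma \ref{facts}(1) together with the reasoning in the proof of Theorem \ref{maincorrespondence}(1)), this conjugation sends $\SS$ to $\SS'$.

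\medskip

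\noindent\textbf{Cohomological descent.} With $[\EE] \in H^1_{Zar}(R, \NN)$ in hand, the plan is to use the short exact sequence
$$1 \to Z_\GG(\SS) \to \NN \to \WW \to 1,$$
where $\WW := \NN/Z_\GG(\SS)$ is a finite \'etale $R$-group scheme (in fact locally constant) because $\SS$ is split. By Lemma \ref{frankfurt}(2), $H^1_{Zar}(R,\WW) = 1$. The standard non-abelian exact sequence of pointed sets then places $[\EE]$ in the image of the natural map $H^1_{Zar}(R, Z_\GG(\SS)) \to H^1_{Zar}(R, \NN)$, and hypothesis (\ref{Hzar}) collapses that image to the distinguished point. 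Therefore $[\EE]$ is trivial in $H^1_{Zar}(R,\NN)$, giving $\EE(R) \neq \emptyset$ and hence the sought $g \in \GG(R)$.

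\medskip

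\noindent\textbf{Main obstacle.} The cohomological descent in the second paragraph is essentially formal once Zariski-local triviality of $\EE$ is available; the real work is the local conjugacy in the first paragraph. The delicate points there are ensuring the minimal parabolics produced at $\mathfrak{p}$ have the same type (so that \cite[XXVI, 5.5]{SGA3} applies), and then deducing $\SS = \SS'$ from $Z_\GG(\SS) = Z_\GG(\SS')$ via the intrinsic characterization of $\SS$ given by Theorem \ref{maincorrespondence}(1).
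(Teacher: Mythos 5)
Your first paragraph is sound. It re-derives, via the chain parabolics $\Rightarrow$ type-equality over $R_\mathfrak{p}$ $\Rightarrow$ conjugacy of parabolics (\cite[XXVI, 5.5]{SGA3}) $\Rightarrow$ conjugacy of Levis (\cite[XXVI, 1.8]{SGA3}) $\Rightarrow$ conjugacy of tori (via the intrinsic characterization in Remark \ref{note max}), exactly the local conjugacy of maximal split tori that the paper simply cites from \cite[XXVI, 6.16]{SGA3}. This establishes that the transporter is a Zariski-locally trivial $N_\GG(\SS)$-torsor, which is Lemma \ref{cocycle} of the paper. (Minor quibble: the ``intrinsic characterization of $\SS$'' is Remark \ref{note max}, valid for any reductive group scheme, rather than Theorem \ref{maincorrespondence}(1), which is stated only for semisimple simply connected $\GG$.)

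The gap is in your second paragraph. You invoke ``the standard non-abelian exact sequence of pointed sets'' in the Zariski topology, i.e.\ exactness of
$$H^1_{Zar}\big(R, Z_\GG(\SS)\big) \longrightarrow H^1_{Zar}(R, \NN) \longrightarrow H^1_{Zar}(R, \WW),$$
to deduce that the class $[\EE]$, whose image in $H^1_{Zar}(R,\WW)$ vanishes, lifts to $H^1_{Zar}(R, Z_\GG(\SS))$. But that exact sequence is only available when $1 \to Z_\GG(\SS) \to \NN \to \WW \to 1$ is exact as a sequence of \emph{Zariski} sheaves, which requires $\NN(R_\mathfrak{p}) \to \WW(R_\mathfrak{p})$ to be surjective for every prime $\mathfrak{p}$. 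This is not automatic: $\NN$ is smooth but not proper, $R_\mathfrak{p}$ is not henselian, so a section of $\WW$ over $R_\mathfrak{p}$ (equivalently a connected component of $\WW_{R_\mathfrak{p}}$ isomorphic to $\Spec R_\mathfrak{p}$) has as its preimage in $\NN_{R_\mathfrak{p}}$ a $Z_\GG(\SS)_{R_\mathfrak{p}}$-torsor that need not have an $R_\mathfrak{p}$-point. Consequently you only know that $[\EE]$ lifts to some $\alpha' \in H^1_{\et}(R, Z_\GG(\SS))$, and the hypothesis (\ref{Hzar}) cannot be applied to $\alpha'$ unless you first show that $\alpha'$ is actually Zariski-locally trivial. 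This is precisely the extra work in the paper's proof: for each $\mathfrak{p}$, the image of $\alpha'_\mathfrak{p}$ in $H^1_{\et}(R_\mathfrak{p},\GG)$ is trivial (because the transporter has an $R_\mathfrak{p}$-point, by your first paragraph), and the composite
$$H^1_{\et}\big(R_\mathfrak{p}, Z_{\GG_{R_\mathfrak{p}}}(\SS_{R_\mathfrak{p}})\big) \simeq H^1_{\et}(R_\mathfrak{p},\PP) \hookrightarrow H^1_{\et}(R_\mathfrak{p},\GG_{R_\mathfrak{p}}),$$
coming from the parabolic $\PP$ with Levi $Z_\GG(\SS)_{R_\mathfrak{p}}$, is injective; hence $\alpha'_\mathfrak{p} = 1$, i.e.\ $\alpha' \in H^1_{Zar}(R, Z_\GG(\SS))$, and only then does (\ref{Hzar}) finish the job. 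Note the contrast with Proposition \ref{conjugacymaximaltori}, which you are modelling your proof on: there the hypothesis $\Pic(R)=1$ kills the \emph{\'etale} group $H^1_{\et}(R,\TT)$ outright, so no return to the Zariski topology is needed; here the hypothesis (\ref{Hzar}) is a priori only about Zariski cohomology, and the descent step via the parabolic embedding is indispensable.
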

 We begin with two preliminary results.

\begin{lemma}\label{frankfurt} Let ${\WW}$ be a finite \'etale $R$-group with $R$ normal. Let $K$ be the field of quotients of $R.$
Then

\smallskip

\noindent {\rm (1)} The canonical map
$$
\chi: H_{\et}^1(R,{\WW}) \longrightarrow H^1(K,{\WW}_K)$$
is injective.

\smallskip

\noindent {\rm (2)} $H^1_{Zar}(R,{\WW})=1$.

\end{lemma}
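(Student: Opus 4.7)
\medskip

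\noindent\emph{Plan.} My approach to both parts rests on the equivalence of categories recalled in \S\ref{casenormal} (\cite[18.1.12]{EGA4}): finite étale covers of $\Spec(R)$ correspond, via base change $\cdot \otimes_R K$, to finite étale $K$-algebras whose factors are finite separable extensions of $K$ unramified over $R$. In particular, $\cdot \otimes_R K$ is a fully faithful functor on finite étale $R$-schemes, and each connected component of a finite étale cover of $\Spec(R)$ is of the form $\Spec(R')$ where $R'$ is the integral closure of $R$ in some finite separable extension $L/K$ unramified over $R$.

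\medskip

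\noindent For \emph{(1)}, suppose $T_1, T_2$ are $\WW$-torsors over $R$ with $\chi([T_1]) = \chi([T_2])$, witnessed by a $\WW_K$-equivariant isomorphism $\phi_K : T_{1,K} \simeq T_{2,K}$. I would first use the equivalence above to lift $\phi_K$ to a unique $R$-scheme isomorphism $\phi : T_1 \to T_2$ (both $T_i$ are finite étale over $R$ since $\WW$ is). To check $\WW$-equivariance, I apply full faithfulness once more, this time to the two $R$-morphisms $\phi \circ \mu_1$ and $\mu_2 \circ (\id_\WW \times \phi)$ from $\WW \times_R T_1$ to $T_2$, where $\mu_i$ denotes the action of $\WW$ on $T_i$; note $\WW \times_R T_1$ is itself finite étale over $R$. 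These two morphisms agree after base change to $K$ by $\WW_K$-equivariance of $\phi_K$, hence over $R$. Thus $\phi$ is $\WW$-equivariant and $[T_1] = [T_2]$ in $H^1_{\et}(R, \WW)$.

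\medskip

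\noindent For \emph{(2)}, let $T$ be a Zariski $\WW$-torsor over $R$. Any Zariski-open trivialization of $T$ must contain the generic point of $\Spec(R)$, so $T(K) \neq \emptyset$. By the equivalence of categories, $T = \coprod_i \Spec(R_i')$ where $R_i'$ is the integral closure of $R$ in some finite separable extension $L_i/K$ unramified over $R$. The given $K$-point factors through some $\Spec(R_{i_0}')$, giving a $K$-algebra embedding $L_{i_0} \hookrightarrow K$; this forces $L_{i_0} = K$, and hence $R_{i_0}' = R$ (integral closure of $R$ in $K$ is $R$ by normality). This exhibits an $R$-point of $T$, so $T$ is trivial.

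\medskip

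\noindent The only step that is not purely formal is the equivariance verification in (1), which hinges on observing that $\WW \times_R T_1$ is itself finite étale over $R$ so that full faithfulness can be applied to morphisms out of it. The rest is a direct application of the equivalence of categories combined with the normality of $R$.
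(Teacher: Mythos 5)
Your proof is correct but routes through a different formalism than the paper's. For (1), the paper computes $H_{\et}^1(R,\WW)$ as $\limind H^1\big(\Gal(S/R),\WW(S)\big)$ over connected finite Galois covers $S/R$, and then compares cocycle by cocycle with the corresponding Galois cohomology over $K$, using the canonical isomorphism $\Gal(S/R)\simeq\Gal(L/K)$, the identity $S\otimes_R K=L$ (normality), and the stabilization $\WW(S)=\WW(L)$ for $S$ large. You instead work with torsors as finite \'etale $R$-schemes and invoke full faithfulness of the generic fibre functor $T\mapsto T_K$ on that category -- a clean geometric repackaging of the same underlying normality facts -- applying it once to lift the $K$-isomorphism of torsors, and once more (to morphisms out of $\WW\times_R T_1$, which is again finite \'etale) to verify equivariance. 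For (2), the paper merely observes that a Zariski-locally-trivial class lies in $\ker\chi$ and concludes from (1); you give a self-contained argument by decomposing the torsor into integral closures $\Spec(R_i')$ of $R$ in unramified finite separable extensions $L_i/K$ and showing that a $K$-point forces $L_{i_0}=K$, hence $R_{i_0}'=R$ by normality. Both arguments are sound and rest on the same dictionary from \S\ref{casenormal}; the paper's cohomological version is shorter, while yours makes the role of normality more visible and has the merit of establishing (2) independently of (1).
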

\begin{proof} (1) Because of the assumptions on $\WW$ we can
  compute $H_{\et}^1(R, \WW)$ as the
 limit of $H_{\et}^1(S/R, \WW)$ with $S$ a connected finite Galois
 extension of $R.$ 
 
 Let $\Gamma = \Gal (S/R).$ It is well-known that $\WW$ corresponds to a finite group $W$ together with an action of the algebraic fundamental group of $R$, and that $H_{\et}^1(S/R, \WW) = H^1\big(\Gamma , \WW(S)\big)$
(see \cite[XI \S5]{SGA1}).  
 If $L$ denotes the field of quotients of $S$ then $L/K$ is also Galois
 with Galois group naturally isomorphic to $\Gamma$  as explained in  \cite[Ch.5 \S2.2 theo. 2]{Bbk}. 
Our map $\chi$ 
is obtained by the base change $K/R.$  By the above considerations  the problem 
reduces to the study of the map
$$\chi: H^1\big(\Gamma ,{\WW}(S)\big) \longrightarrow H^1\big(\Gamma ,{\WW}(S\otimes_RK)\big)$$
when passing to the limit over $S.$  Since $R$ is normal by 
\cite[18.10.8 and 18.10.9]{EGAIV} we have $S \otimes_RK = L.$ 
If $S$ is sufficiently large, $\WW(S) = W = \WW(L)$. The compatibility of the two Galois actions gives the desired injectivity. 

(2) It is clear that $H^1_{Zar}(R,{\WW})$ is in the kernel of $\chi.$
\end{proof}

\begin{lemma}\label{cocycle} Let $\SS$ and ${\SS}'$ be generically 
maximal split tori of $\GG$. Then
%
%
the transporter $\tau_{\SS,{\SS}'}={\rm \bf Trans}_{\GG}(\SS,{\SS}')$
is a (Zariski) locally trivial $N_{\GG}(\SS)$--torsor over $R.$
\end{lemma}
\begin{proof}

By \cite[XI, 6.11 (a)]{SGA3}, $\tau_{\SS,{\SS}'}$ is a closed subscheme
of $\GG$. It is clearly a right  (formal) torsor under the affine $R$--group
$N_{\GG}(\SS)$. Since ${\SS}_{R_\mathfrak{p}}$ and ${\SS}'_{R_\mathfrak{p}}$ are maximal
split tori of ${\GG}_{R_\mathfrak{p}}$ they are conjugate under $\GG(R_\mathfrak{p})$
by~\cite[XXVI, 6.16]{SGA3}.
Thus $\tau_{\SS,{\SS}'}$ is an $N_{\GG}(\SS)$--torsor which is locally trivial
(i.e. there exists a Zariski open cover $\XX=\cup\,\XX_i$ such that
$\tau_{\SS,{\SS}'}(\XX_i)\not=\emptyset).$
\end{proof}

\noindent {\it Proof of Proposition \ref{obstacle}} Let ${\SS}'$
be a generically  maximal split torus of $\GG$. The transporter
$\tau_{\SS,{\SS}'}$ yields according to Lemma~\ref{cocycle} an element
$\alpha\in H^1_{Zar}(R, N_{\GG}(\SS))$. Our aim
is to show that $\alpha$ is trivial.

Consider the exact sequence (on $\XX_{\acute{e}t}$) of $R$--groups
$$
1\longrightarrow Z_{\GG}(\SS) \longrightarrow N_{\GG}(\SS)
\longrightarrow \WW\longrightarrow 1
$$
with $\WW=N_{\GG}(\SS)/Z_{\GG}(\SS)$. Then $\WW$ is a finite
\'etale group over $R$ (see \cite[XI, 5.9]{SGA3}). By Lemma  \ref{frankfurt}(2)
the image of $\alpha$ in $H_{\et}^1(R,\WW),$ which we know lies in $H^1_{Zar}(R,\WW),$
is trivial. Thus we may assume $\alpha\in H_{\et}^1\big(R,Z_{\GG}(\SS)\big)$.
To finish the proof we need to show that
$$
\alpha\in {\rm Im}\,\big[H^1_{Zar}\big(R,Z_{\GG}(\SS)\big)\longrightarrow
H_{\et}^1\big(R,Z_{\GG}(\SS)\big)\big].
$$
For this it suffices to show that the image $\alpha_\mathfrak{p}$ of
$\alpha$  in
$$H_{\et}^1\big(R_\mathfrak{p},Z_{\GG}(\SS)\times_R R_\mathfrak{p}\big)=
H_{\et}^1\big(R_\mathfrak{p},Z_{{\GG}_{R_\mathfrak{p}}}({\SS}_{R_\mathfrak{p}})\big)$$ is trivial for all $\mathfrak{p}\in \XX.$

Since $\SS$ is generically maximal split, ${\SS}_{R_\mathfrak{p}}$ is a maximal
split torus of ${\GG}_{R_\mathfrak{p}}$. Similarly for ${\SS}'_{R_\mathfrak{p}}$. Now
by~\cite[XXVI prop. 6.16]{SGA3} ${\SS}_{R_\mathfrak{p}}$ and ${\SS}'_{R_\mathfrak{p}}$ are conjugate
under ${\GG}_{R_\mathfrak{p}}(R_\mathfrak{p}) = {\GG}(R_\mathfrak{p})$, Thus the image of $\alpha$ under the composition of
the natural maps
$$
H_{\et}^1\big(R,Z_{\GG}(\SS)\big)\to
H_{\et}^1\big(R,N_{\GG}(\SS)\big)\to
H_{\et}^1\big(R_{\mathfrak{p}}, N_{{\GG}_{R_\mathfrak{p}}}({\SS}_{R_\mathfrak{p}})\big)
\to
H_{\et}^1(R_\mathfrak{p},{\GG}_{R_\mathfrak{p}})
$$
is trivial.
Let $\PP$ be a parabolic subgroup of ${\GG}_{R_\mathfrak{p}}$ containing
$Z_{{\GG}_{R_\mathfrak{p}}}({\SS}_{R_\mathfrak{p}})$ as a Levi subgroup 
(see Lemma \ref{facts}).
Then (see the proof of \cite[XXVI cor. 5.10]{SGA3}) we have
$$
H_{\et}^1(R_\mathfrak{p},Z_{{\GG}_{R_\mathfrak{p}}}({\SS}_{R_\mathfrak{p}})) \simeq H_{\et}^1(R_\mathfrak{p},\PP) \hookrightarrow
H_{\et}^1(R_\mathfrak{p},{\GG}_{R_\mathfrak{p}}).
$$
It now follows that $\alpha_\mathfrak{p}$ is trivial. \qed

\subsection{A counter-example to conjugacy for multiloop algebras}\label{counterexample}

Let $\GG$ and $\gg$ be as in Theorem \ref{maincorrespondence}. We know that the conjugacy
of two MAD subalgebras in $\mathfrak{g}$
is equivalent to the conjugacy of the corresponding maximal split tori.
The following example shows that in general maximal split
tori are not necessarily conjugate.

Let $D$ be the quaternion algebra over $R = R_2 =k[t_1^{\pm 1},t_2^{\pm 1}]$
with generators $T_1,T_2$ and relations
$T_1^2=t_1,\ T_2^2=t_2$ and $T_2T_1=-T_1T_2$ and let $A=M_2(D)$.
We may view $A$ as the $D$-endomorphism algebra of the free right rank $2$ module $V=D\oplus D$ over $D.$ 
Let $\GG={\rm \bf SL}\,(1,A)$. This is a simple simply connected $R$-group of absolute type ${\rm \bf SL}_{4,R}.$ It contains a split torus $\SS$ whose
$R$--points are matrices
of the form
$$
\left(
\begin{array}{cc}
x & 0\\
0 & x^{-1}
\end{array}
\right)
$$
where $x\in R^{\times}.$ It is well-known that this is a maximal split torus of $\GG.$.

Consider now the $D$-linear map $f:V=D\oplus D \to D$ given by
$$(u,v)\to (1+T_1)u-(1+T_2)v.$$ Let $\mathcal{L}$ be its kernel.
It is shown in \cite{GP1} that $f$ splits and that $\mathcal{L}$ is a projective $D$--module of rank $1$
which is not free. Since $f$ is split, we have another decomposition
$V\simeq \mathcal{L}\oplus D$.
Let $\SS'$ be the split torus of $\GG$ whose $R$--points consist of linear transformations
acting on the first summand $\mathcal{L}$ by multiplication $x\in R^{\times}$
and on the second summand by $x^{-1}.$ As before, $\SS'$ is also a maximal split
torus of $\GG$.

We claim that $\SS$ and $\SS'$ are not conjugate under $\GG(R).$ To see this we note that
given $\SS$ we can restore the two summands in the decomposition $V=D\oplus D$
as  eigenspaces of elements $\SS(R)$.
Similarly, we can uniquely restore the two summands in the decomposition
$V=\mathcal{L}\oplus D$ out of $\SS'$. Assuming now that $\SS$ and $\SS'$ are conjugate
by an element in $\GG(R)$ we obtained immediately that
the $D$--submodule $\mathcal{L}$ in $V$ is isomorphic to one of the components
of $V=D\oplus D$, in particular $\mathcal{L}$ is free
-- a contradiction.

\section{The nullity one case}

In this section we look in detail at the case
 $R=k[t^{\pm 1}]$ where $k$ is assumed to be {\it algebraically closed}.
It is known that twisted forms of $\dg\otimes_kR$ are nothing but the derived
algebras of the affine Kac-Moody Lie algebras
modulo their centres~\cite{P2}.
We maintain all of our previous notation, except for
the fact that now we specify that $n=1$.
\begin{lemma} Every maximal split torus of $\GG$ is
generically maximal split.
\end{lemma}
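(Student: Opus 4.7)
The plan is to reduce the lemma to a statement about parabolic subgroups of $\HH := Z_\GG(\SS)$, and then exploit the fact that $R = k[t^{\pm 1}]$ is a Dedekind domain (in fact a PID) to extend proper parabolics from the generic fiber back to all of $\Spec R$ via the valuative criterion of properness.

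First I would observe that, since $\SS$ is maximal split in $\GG$ and is always contained in the maximal split subtorus of the radical $\TT$ of $\HH$ (Remark~\ref{note max}), maximality forces $\SS$ to coincide with that maximal split subtorus. Hence Proposition~\ref{flag}(d) applies to $\SS \subset \GG$ and yields the equivalence: $\SS$ is maximal split in $\GG$ if and only if $\HH$ has no proper parabolic subgroup over $R$. By Lemma~\ref{maxtor} applied to $\TT$ and $\SS$, the base change $\SS_K$ is the maximal split subtorus of $\TT_K$, and $\TT_K$ is the radical of $\HH_K = Z_{\GG_K}(\SS_K)$; so the same equivalence applies to $\SS_K \subset \GG_K$, and it suffices to show that $\HH_K$ admits no proper parabolic over $K$.

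Next I would argue by contradiction. Suppose $\HH_K$ admits a proper parabolic $\PP_K$ of some non-trivial type $\bf t$. Then $\PP_K$ corresponds to a $K$-rational point of the scheme ${\rm \bf Par}_{\bf t}(\HH)$, which by Remark~\ref{type} is smooth and projective over $\Spec R$. Since $R$ is a PID, the valuative criterion of properness, applied at each closed point $\mathfrak{p}$ of $\Spec R$ (where $R_\mathfrak{p}$ is a DVR with fraction field $K$), extends the given $K$-point uniquely to an $R_\mathfrak{p}$-point of ${\rm \bf Par}_{\bf t}(\HH)$. Uniqueness forces these local extensions to glue to a genuine $R$-point, i.e., to a parabolic subgroup $\PP$ of $\HH$ over $R$ of the non-trivial type $\bf t$; such a $\PP$ is proper, contradicting the assumption that $\HH$ has no proper parabolic over $R$.

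The main obstacle I anticipate is the bookkeeping about base change from $R$ to $K$: one needs that the radical of a reductive group commutes with this base change (which holds because $Z(\HH)/\rad(\HH)$ is a finite multiplicative type group scheme and so has no torus part after any base change) and that the type $\bf t$ of a parabolic is preserved (which holds since both $\Spec R$ and $\Spec K$ are connected and the Coxeter--Dynkin diagram of $\HH$ is invariant under base change). These are standard, but they are what allow the parabolic-theoretic characterization of maximal splittedness to descend along $R \to K$, and hence make the valuative criterion argument applicable.
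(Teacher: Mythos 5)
Your argument is correct, and it takes a genuinely cleaner route than the paper's own proof. Both proofs share the central mechanism — extending a proper parabolic from $K$ to $R$ via the properness of ${\rm \bf Par}_{\bf t}$ and the fact that $R$ is regular of dimension one — but they differ in how the contradiction is extracted. The paper works with the derived group $\HH' = \mathcal{D}(\HH)$ and the exact sequence $1 \to \bmu \to \rad(\HH)\times_R\HH' \to \HH \to 1$: after extending the parabolic of $\HH'_K$ to an $R$-parabolic $\PP'$ of $\HH'$, it invokes Proposition~\ref{G_m} to find a split $\bG_{m,R}\subset \PP'$, and then uses the multiplication map (which has finite kernel) to produce a split torus of $\GG$ properly containing $\SS$, contradicting maximality. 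You instead invoke Proposition~\ref{flag}(d) once over $R$ and once over $K$ (having checked, via Remark~\ref{note max} and Lemma~\ref{maxtor}, that $\SS$ and $\SS_K$ are the maximal split subtori of the radicals of $\HH$ and $\HH_K$ respectively, so condition (d) holds in both settings), thereby converting both statements about maximal split tori into statements about parabolics; the contradiction then falls out immediately from the extension of the parabolic, with no need to manufacture a larger torus by hand. This buys you a shorter proof that makes more transparent use of the machinery built earlier in the paper, and avoids the finite-kernel bookkeeping; the paper's version is more self-contained and explicit. One small remark on your extension step: rather than gluing $R_\mathfrak{p}$-points one at a time, the standard phrasing is that the $K$-point gives a rational map $\Spec R \dashrightarrow {\rm \bf Par}_{\bf t}(\HH)$ which, by the valuative criterion at every codimension-one point and separatedness of the target, extends to a morphism since $R$ is a normal ring of dimension one — but this is exactly what you are describing, so it is only a matter of phrasing.
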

\begin{proof}
Let $\SS$ be a maximal split torus of our simply connected
$R$--group $\GG$. We must show that $\SS_K$ is a maximal split torus of
$\GG_K$.
We consider the reductive $R$-group $\HH=Z_{\GG}(\SS)$, its derived
(semisimple) group $\mathcal{D}\,(\HH)$ which we denote by
$\HH'$, and the radical ${\rm rad}\,(\HH)$ of $\HH$.
Recall that ${\rm rad}\,(\HH)$ is a central torus of $\HH$ and that
we have an exact sequence of $R$--groups
$$
1\longrightarrow \bmu
\longrightarrow {\rm rad}\,(\HH)\times_R \HH'
\stackrel{m}{\longrightarrow} \HH \longrightarrow 1
$$
where $m$ is the multiplication 
and
$\bmu$ is a finite group of multiplicative type.


Since $\SS$ is central in $\HH$ it lies inside
${\rm rad}\,(\HH)$, hence it is the maximal split torus of ${\rad}\,(\HH)$.
Recall that by Lemma~\ref{maxtor}, $\SS_K$ is still the maximal split torus of
${\rad}\,(\HH)_K$. If $\SS_K$ is not a maximal
split torus of $\GG_K$, there exists a split torus $\SS'$
of  $\HH_K$ such that $\SS'$ is not a subgroup of ${\rm rad}\,(\HH_K)$.
Thus if we set $(\SS'\cap \HH'_K)^{\circ}=\TT$ then $\TT$ is a non-trivial split torus
of $\HH'_K$.
Then $Z_{\HH'_K}(\TT)$ is a Levi subgroup of a proper parabolic
subgroup $\PP$ of $\HH'_K$. 

Let $\t={\rm type}\,(\PP)$ be the type
of $\PP.$ 
Let ${\rm \bf Par}_{\t}(\HH')$ be the $R$-scheme of parabolic
subgroups of $\HH'$ of type $\t.$ Then ${\rm \bf Par}_{\t}(\HH')(K)
\not=\emptyset$. Since ${\rm \bf Par}_{\t}(\HH')$ is proper and $R$ is regular of
dimension $1$, it follows that ${\rm \bf Par}_{\t}(\HH')(R)\not=\emptyset$.
Let $\PP'$ be a parabolic subgroup $\HH'$ 
of type $\t$. 
It is a proper subgroup, so that by Proposition~\ref{G_m} $\PP'$
contains a copy of $\bG_{m,R}$. But then $m: \SS\times \bG_{m,R}
\to \HH$ yields a split torus of $\HH$ that properly contains
$\SS$ (since the multiplication map has finite kernel), which
contradicts the maximality of $\SS$.
\end{proof}
\begin{theorem} In nullity one all MAD subalgebras of $\gg$ are conjugate
under the adjoint action of $\GG(R)$.
\end{theorem}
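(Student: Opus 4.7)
The plan is to apply the machinery developed in the preceding sections to translate the MAD conjugacy problem into a single cohomological vanishing, and then to establish that vanishing using the very special features of the ring $R = k[t^{\pm 1}]$ with $k$ algebraically closed.

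First I would invoke Theorem~\ref{maincorrespondence}(3)--(4): conjugacy of two MADs of $\gg$ under the adjoint action of $\GG(R)$ is equivalent to conjugacy of the corresponding maximal split tori of $\GG$ under $\GG(R)$. The Lemma just established tells us that every maximal split torus of $\GG$ is generically maximal split, so Proposition~\ref{obstacle} applies. Hence the whole theorem reduces to the single cohomological statement
$$H^1_{Zar}\bigl(R,\, Z_{\GG}(\SS)\bigr) \;=\; 1$$
for some generically maximal split torus $\SS$ of $\GG$.

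Next I would analyze the reductive centralizer $\HH := Z_{\GG}(\SS)$. Combining Remark~\ref{note max} with Lemma~\ref{wellknown}, one may assume that $\SS$ equals the maximal split subtorus $\TT_d$ of $\TT := \rad(\HH)$, and then Proposition~\ref{flag}(d) shows that the reductive $R$-group $\HH/\SS$ is anisotropic. From the central short exact sequence
$$1 \longrightarrow \SS \longrightarrow \HH \longrightarrow \HH/\SS \longrightarrow 1$$
and the fact that $R = k[t^{\pm 1}]$ is a PID (so $\Pic(R) = 1$ and hence $H^1_{Zar}(R, \SS) = 1$ for the split torus $\SS$), together with Lemma~\ref{frankfurt}(2) to handle any finite étale component group that may appear, one reduces the desired vanishing to the single statement
$$H^1_{Zar}\bigl(R,\, \HH/\SS\bigr) \;=\; 1.$$

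The hard part will be this last vanishing for the anisotropic reductive $R$-group $\HH/\SS$. Here the nullity-one assumption is genuinely exploited: by the Corollary following Proposition~\ref{G_m}, an anisotropic reductive $R$-group admits no proper parabolic subgroup. Combined with the structure theory of loop reductive groups in nullity one (where the one-variable Laurent polynomial setting over an algebraically closed field $k$ gives a very restrictive handle on anisotropic quotients via the associated loop cocycle in $\Hom(\hat{\Z}(1), \dG(\ol k))$), this should force $\HH/\SS$ to be, up to an isogeny of its centre, a torus; once there, the vanishing follows from the character-lattice description of tori (as in Lemma~\ref{maxtor}) together with $\Pic(R) = 1$ and Lemma~\ref{frankfurt}(2) applied to the Galois module of characters. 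Making this reduction to a torus rigorous is the technical heart of the argument and the step where the hypothesis $n = 1$ is really used; for $n \geq 2$ the counter-example of \S\ref{counterexample} shows that no such reduction can possibly succeed in that generality.
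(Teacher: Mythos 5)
Your initial reduction is exactly the paper's: via Theorem~\ref{maincorrespondence}(3)--(4), the preceding Lemma (every maximal split torus is generically maximal split), and Proposition~\ref{obstacle}, the whole problem collapses to showing $H^1_{Zar}\bigl(R, Z_{\GG}(\SS)\bigr) = 1$ for $\SS$ a maximal split torus of $\GG$. The paper then stops: since $Z_{\GG}(\SS)$ is a reductive $R$-group and $R = k[t^{\pm 1}]$ with $k$ algebraically closed is a Dedekind domain, it simply cites \cite[Theorem~3.1]{P2}, which yields the even stronger vanishing $H^1_{\et}\bigl(R, Z_{\GG}(\SS)\bigr) = 1$.

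Where you diverge is in attempting to reprove this cohomological vanishing from scratch by a d\'evissage $\SS \to \HH \to \HH/\SS$, and it is here that a genuine gap appears — one you candidly flag yourself. The claim that the anisotropic reductive quotient $\HH/\SS$ must be ``up to an isogeny of its centre, a torus'' is the real content and is not established; it needs Steinberg's theorem for the cohomological-dimension-one field $F_1 = k((t))$ (absence of anisotropic semisimple groups there) together with a spreading-out argument back to $R_1$, neither of which appears in your outline. Moreover, even granting that $\HH/\SS$ is a torus, the claimed vanishing $H^1_{Zar}(R,\HH/\SS)=1$ does not follow from Lemma~\ref{frankfurt}(2), which concerns \emph{finite \'etale} $R$-groups, not non-split tori; a separate argument (an induced-torus resolution, or again the cited result of [P2]) is required. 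Finally, deducing $H^1_{Zar}(R,\HH)=1$ from vanishing at the two ends of the central short exact sequence uses non-abelian twisting and is glossed over. In short: the strategy of replacing the citation by a direct proof is legitimate, but the proposal as written is a plan for the paper's black box rather than a proof, and the heart of it (the torus reduction) is missing.
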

\begin{proof}
 In view of the last Lemma and Proposition~\ref{obstacle}
it will suffice to show that if $\SS$ is a maximal split torus
of $\GG$, then $H^1_{Zar}(R,Z_{\GG}(\SS))=1$.
Since $Z_{\GG}(\SS)$ is a reductive $R$-group one in fact has
a much stronger result, namely that $H_{\et}^1(R,Z_{\GG}(\SS))=1$
(see~\cite[theo. 3.1]{P2}).
\end{proof}
\begin{remark} Let $G$ be the ``simply connected''
Kac-Moody (abstract) group corresponding
to $\gg$ (see [PK], and also \cite{Kmr} and  [MP] for details). We have the
adjoint representation ${\rm Ad}: G \to {\rm Aut}_{k-{\rm Lie}}(\gg)$.
The celebrated Peterson-Kac conjugacy theorem \cite{PK} for symmetrizable Kac-Moody
(applied to the affine case) asserts
that all MAD subalgebras of $\gg$ are conjugate under the adjoint action
of the group ${\rm Ad}\,(G)$ on $\gg$, while
our result gives conjugacy under the image of $\GG(R),$ where the image is that of 
the adjoint representation ${\rm Ad}: \GG \to
{\rm Aut}\,(\gg)$ evaluated at $R$.  In the untwisted case it is known that the two groups induce the same group of automorphisms of $\gg$ (see for example \cite{Kmr}). The twisted case appears to remain unstudied.
\end{remark}

\section{A density property for points of loop groups}
In this section $\XX = \Spec(R_n).$ For a description of $\pi_1(\XX, a)$ see \ref{aflaurent}.

Let $\dG$ be a linear algebraic $k$--group. Let $\eta \in Z^1\big(\pi_1(\XX,a), \bG(\ol{k})\big)$ be a loop cocycle and recall the decomposition  $\eta =(\eta^{geo}, z)$ into geometric and arithmetic parts described in Lemma \ref{Theta}. 
Recall that we may view $\eta^{geo}$ as a $k$--group homomorphism $
_{\infty}\bmu \to  {_z\bG}.$
We denote below by $({_z\bG)}^{\eta^{geo}}$ the centralizer
in ${_z\bG}$
 of  the  group homomorphism $\eta^{geo}$ . Thus defined $({_z\bG)}^{\eta^{geo}}$ is a $k$--subgroup of ${_z\bG}$

\begin{remark}\label{finitelevel} By continuity there exists $m$ and a Galois extension $\tilde{k}$ of $k$ such that $\eta$ factors through
$$
\eta:  \widetilde \Gamma_{n,m} \to {\bG}(\tilde k)
$$
where
$$\widetilde \Gamma_{n,m} := \Gal(
{R_{n,m}}\otimes_k \tilde{k}/R_n)=
 \bmu_m^n( \tilde k) \rtimes \Gal(\widetilde{k}/k)$$ where
$m > 0 $ and $\widetilde{k}/k$ is a finite Galois
extension containing
all $m$--roots of unity in $\overline {k}.$ By means of this interpretation $\eta$
can be viewed as a Galois cocycle in  
 $Z^1\big(\widetilde{\Gamma}_{n,m}, \bG(R_{n,m} \otimes_k \tilde{k})\big).$  We call this procedure ``reasoning at the finite level".

\end{remark}


We say that an  abstract group $M$ is {\it pro-solvable}
if it admits a filtration
$$
\cdots \subset M_{n+1} \subset M_n \subset  \cdots  \subset M_0 =M
$$
by normal subgroups such that $\cap\, M_n=1$ and $M_n/M_{n+1}$ is abelian
for all $n \geq 0$. If there exists a filtration such that $M_n/M_{n+1}$ are  $k$-vector spaces,
we say that $M$ is {\it pro-solvable in $k$-vector spaces}.

\begin{theorem} \label{density}
Let $\bG$ be a linear algebraic $k$-group such that $\bG^{\circ}$
is reductive. Let $\eta \in Z^1\big(\pi_1(\XX,a), \bG(\ol{k})\big)$ be a loop cocycle such that the twisted $R_n$--group $\HH= {_\eta}(\bG_{R_n})$ is anisotropic. There exists a family of pro-solvable groups in $k$-vector spaces
$(J_i)_{i=1,..,n}$
such that
 $$
\HH(F_n) \simeq   J_n \rtimes J_{n-1} \rtimes \ldots \rtimes J_{1} \rtimes
( {_z\bG})^{\eta^{geo}}(k) \simeq (J_n \rtimes J_{n-1} \rtimes \ldots
\rtimes J_{1} )  \, \cdot \,  \HH(R_n).$$
\end{theorem}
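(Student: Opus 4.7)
The plan is to prove the theorem by induction on $n$, at each step peeling off one variable through the complete discretely valued structure of $F_n = F_{n-1}((t_n))$, whose ring of integers is $O_n := F_{n-1}[[t_n]]$ with residue field $F_{n-1}$. The base case $n=0$ is trivial: $F_0 = R_0 = k$, $\pi_1(\Spec(k), a) = \Gal(k)$ has no geometric part, so $\HH = {_z\bG}$ and the statement collapses to the tautology $\HH(k) = ({_z\bG})^{\eta^{geo}}(k)$, with no $J_i$ factors.

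For the inductive step, I would first establish that anisotropy propagates to $F_n$, i.e.\ that $\HH_{F_n} := \HH \times_{R_n} F_n$ is anisotropic as a reductive $F_n$-group. This is where the loop structure is used essentially: a non-trivial $F_n$-split torus in $\HH_{F_n}$ would, after reasoning at the finite level (Remark \ref{finitelevel}) and descending through the Galois cover trivializing $\eta$, yield a non-trivial $R_n$-split torus in $\HH$, contradicting the hypothesis. Granting anisotropy over $F_n$, and noting that $F_{n-1}$ is perfect (being of characteristic zero), Bruhat--Tits theory tells us that the affine building of $\HH_{F_n}$ is reduced to a single point; consequently $\HH(F_n)$ coincides with the unique maximal bounded subgroup, identified with the $O_n$-points of the canonical smooth reductive $O_n$-model of $\HH_{F_n}$.

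Reducing modulo $t_n$ gives an exact sequence
\[ 1 \to J_n \to \HH(O_n) \to \HH'(F_{n-1}) \to 1, \]
where $\HH'$ is the reductive special fibre. Smoothness filters $J_n$ by the $F_{n-1}$-vector spaces $t_n^m \mathfrak{h}/t_n^{m+1}\mathfrak{h}$, so $J_n$ is pro-solvable in $k$-vector spaces, and the embedding $F_{n-1} \hookrightarrow O_n$ as constants splits the sequence, yielding $\HH(F_n) = J_n \rtimes \HH'(F_{n-1})$. The crucial identification is that $\HH' = \HH_0 \times_{R_{n-1}} F_{n-1}$ for a loop reductive anisotropic $R_{n-1}$-group $\HH_0$, whose defining cocycle $\eta'$ is the restriction of $\eta$ along the projection $\pi_1(\Spec(R_n), a) \twoheadrightarrow \pi_1(\Spec(R_{n-1}), a)$ killing the $n$-th factor of $\wh{\Z}(1)^n$ (see \ref{aflaurent}). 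This projection preserves the arithmetic part, and a direct check on the geometric part yields $({_z\bG})^{(\eta')^{geo}} = ({_z\bG})^{\eta^{geo}}$, the $n$-th loop direction being absorbed into the $F_{n-1}$-structure. Applying the inductive hypothesis to $\HH_0$ gives
\[ \HH_0(F_{n-1}) = J_{n-1} \rtimes \cdots \rtimes J_1 \rtimes ({_z\bG})^{\eta^{geo}}(k) = (J_{n-1} \rtimes \cdots \rtimes J_1) \cdot \HH_0(R_{n-1}), \]
which combined with the decomposition of $\HH(F_n)$ above delivers the first asserted equality. The second equality follows because $\HH_0(R_{n-1})$ embeds naturally into $\HH(R_n)$ via $R_{n-1} \hookrightarrow R_n$, and together with the $J_i$ layers exhausts $\HH(F_n)$.

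The main obstacle is twofold. First, the anisotropy transfer from $R_n$ to $F_n$ for a loop reductive group requires a careful descent argument through the trivializing Galois cover, combining the cohomological constraint imposed by anisotropy on $\eta$ with the structure of $\pi_1(\Spec(R_n), a)$. Second, and more delicately, identifying $\HH'$ with the base change of a loop reductive $R_{n-1}$-group having the correct geometric centralizer demands tracking the arithmetic/geometric decomposition of Lemma \ref{Theta} under the functorial projection of fundamental groups induced by $R_{n-1} \hookrightarrow R_n$; this is where the inductive structure genuinely interlocks the loop cocycle data with the Bruhat--Tits reduction.
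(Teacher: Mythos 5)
Your overall strategy — induct on $n$, pass to the complete field $F_n = F_{n-1}((t_n))$, use anisotropy to get a single-point building and a parahoric model, reduce modulo $t_n$, and identify the resulting exact sequence with a semidirect product — is the same skeleton as the paper's proof. The anisotropy transfer from $R_n$ to $F_n$ that you sketch via descent is done in the paper by citing [GP3, cor.~7.4.3], which is fine.

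The gap is in the identification of the reduction modulo $t_n$. You assert that the reductive special fibre $\HH'$ is the base change of a twist of the \emph{same} group $\bG$ over $R_{n-1}$, obtained by restricting $\eta$ along the projection $\pi_1(\Spec R_n)\twoheadrightarrow\pi_1(\Spec R_{n-1})$, and then claim $({_z\bG})^{(\eta')^{\rm geo}}=({_z\bG})^{\eta^{\rm geo}}$. Both parts of this are wrong. The centralizer of the $\bmu_m^{n-1}$-action (which is what $(\eta')^{\rm geo}$ would give you) is in general strictly larger than the centralizer of the full $\bmu_m^n$-action, so that claimed equality fails. More importantly, the special fibre is not a form of $\bG$ at all: writing $\eta^{\rm geo}$ as a $k$-group morphism $\bmu_m^n\to{_z\bG}$, decomposing $\bmu_m^n=\bmu_m^{n-1}\times\bmu_m$, and letting $\bG_{n-1}$ be the centralizer in $\bG$ of the last $\bmu_m$, the special fibre is a twist of the \emph{smaller} group $\bG_{n-1}$, not of $\bG$. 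The mechanism is visible when you specialize the cocycle condition at $t_n=0$: applying it to the generator $\tau_n$ of $\Gal(\widetilde{F}_{n,m}/\widetilde{F}_{n-1,m}((t_n)))$ yields $\eta(\tau_n)\,h_{n-1}=h_{n-1}$, forcing the specialized element to land in $\bG_{n-1}(\widetilde{F}_{n-1,m})$. The group fed into the induction is therefore $\HH_{n-1}={_{\eta_{n-1}}}(\bG_{n-1,R_{n-1}})$ with $\eta_{n-1}$ the restriction of $\eta^{\rm geo}$ to $\bmu_m^{n-1}$ viewed in $\bG_{n-1}$; its anisotropy over $R_{n-1}$ is then a nontrivial but true assertion (established as in [GP3, theo.~7.9]), whereas your $\HH_0={_{\eta'}}\bG_{R_{n-1}}$ is generically isotropic and the induction hypothesis does not apply to it. The equality $\bG_{n-1}^{\eta_{n-1}^{\rm geo}}=\bG^{\eta^{\rm geo}}$, which is the correct version of what you need, holds because centralizing by $\bmu_m$ and then by $\bmu_m^{n-1}$ is the same as centralizing by $\bmu_m^n$.

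A second, subsidiary point: you identify $J_n$ with the kernel of reduction on a ``canonical smooth reductive $O_n$-model.'' For a ramified twisted group (which is the generic situation for loop cocycles), the parahoric model attached to $\phi$ does not have special fibre of the same type as the generic fibre, and the kernel of the specialization map $sp_n$ is a priori the pro-unipotent radical $P^*$ in the sense of the Appendix, not merely the congruence kernel of a reductive model. The equality $\ker(sp_n)=P^*$ requires the comparison $P^*=P\cap Q^*$ across the tame Galois cover $\widetilde{F}_{n,m}/F_n$, which is exactly the content of Proposition~\ref{compatibility} and Corollary~\ref{app-cor}. Without this you only know $\ker(sp_n)\supseteq P^*$ or have to invent your own congruence-subgroup argument; your filtration by $t_n^m\mathfrak{h}/t_n^{m+1}\mathfrak{h}$ lives in the wrong (too large) ambient group.
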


\begin{proof} Twisting by $z$ we may assume that $z$ is trivial.
It is convenient  to work at a finite level, namely with a cocycle
$
\eta:  \widetilde \Gamma_{n,m} \to {\bG}(\tilde k)
$
as in Remark~\ref{finitelevel}. 

We proceed by induction on $n \geq 0$; the case $n =0$ being obvious. We reason by means of a building argument and we
view $\widetilde{F}_{n,m}$ and its subfield
$F_n= (\widetilde{F}_{n,m})^{\widetilde \Gamma_{n,m}}$
as local complete fields
with the residue fields $\widetilde{F}_{n-1,m}$ and $F_{n-1}$ respectively.
Let ${\cal B}_n=
{\cal B}({\bG}_{\widetilde{F}_{n,m}})$ be the (enlarged) Bruhat-Tits building
of the $\widetilde{F}_{n,m}$--group
${\bG}_{\widetilde{F}_{n,m}}$ \cite[\S 2.1]{T2}.
Recall that ${\cal B}_n$ is equipped with a natural action of 
${\bG}(\widetilde{F}_{n,m}) \rtimes \widetilde{\Gamma}_{n,m}$.
Since $\HH$ is anisotropic the algebraic $F_n$--group
$\HH_{F_n}$
is also anisotropic by \cite[cor. 7.4.3]{GP3}. 
It is shown  in \cite[theo. 7.9]{GP3} 
that the building of $\HH_{F_n}$
inside ${\cal B}_n$ consists of a single point
$\phi$ whose stabilizer is
$\bG\bigl(\widetilde{F}_{n-1,m}[[t_n^{\frac{1}{m}}]] \bigr)$. Since $\HH(F_n)$
stabilizes
$\phi$
it follows that
\begin{equation}\label{id2}
\HH(F_n)= \Bigl\{   g \in \bG\bigl(\widetilde{F}_{n-1,m}
[[t_n^{\frac{1}{m}}]] \bigr)
\, \mid \, \eta(\sigma) \,  \sigma(g) = g
\enskip \forall \sigma \in  \widetilde \Gamma_{n,m}  \Bigr\}.
\end{equation}

We next decompose 
$\bmu_m^{n} =  \bmu_m^{n-1} \times \bmu_m$. 
The second component is a finite $k$-group of multiplicative type
acting on $\bG$ via $\eta^{geo}$. 
We let $\bG_{n-1}$ denote the $k$--subgroup of $\bG$
which is the centralizer of this action \cite[II 1.3.7]{DG}.
The connected component 
of $\bG_{n-1}$ is  reductive
according to \cite{Ri}.
Since the action of $\bmu_m^{n-1}$
on $\bG$ given by  $\eta^{geo}$ commutes with that of $\bmu_m$ 
the $k$--group morphism $\eta^{geo}: \bmu_m^n \to \bG$ factors through
$\bG_{n-1}$.

Denote by $\eta_{n-1}^{geo}$  the restriction of
$\eta^{geo}$ to the $k$--subgroup
$\bmu_m^{n-1}$ 
of $\bmu_m^{n}$.
Set $\tilde \Gamma_{n-1, m} := \bmu_m^{n-1}(\tilde k) \rtimes \Gal(\tilde k/k)$
and consider the loop cocycle
$$\eta_{n-1}: \tilde \Gamma_{n-1,m} \to \bG_{n-1}(\tilde k)$$
attached to $(1, \eta^{geo}_{n-1})$.
We define $$\HH_{n-1,R_{n-1}} = {_{\eta_{n-1}}}(\bG_{n-1,R_{n-1}}).$$

The crucial point for the induction argument is the fact that
the twisted $F_{n-1}$--group  ${\eta_{n-1}\bG_{n-1}}$ 
is anisotropic. 
This is established just as in \cite[theo. 7.9]{GP3}. We look now at the specialization map
$$
sp_n: \HH(F_n) \hookrightarrow \bG\bigl(\widetilde{F}_{n-1,m}
[[t_n^{\frac{1}{m}}]] \bigr)
\to  \bG(\widetilde{F}_{n-1,m}).
$$
Let $P$ be the parahoric subgroup of
 $\HH^{\circ}(F_n)$ attached to the point $\phi$. Since the building of $\HH_{F_n}$
consists of the single point we have $P=\HH^{\circ}(F_n)$.
Recall that the notation $P^*$ stands for the ``pro-unipotent radical''
of $P$ as defined in \S \ref{congruence} of the Appendix.

\begin{claim}
We have $P^* = \ker(sp_n)$ and the
 image of $sp_n$ is  $\HH_{n-1}(F_{n-1})$.
\end{claim}

Because $\dG$ is a $k$--group it is clear that the kernel of the specialization map
$\bG\bigl(\widetilde{F}_{n-1,m}
[[t_n^{\frac{1}{m}}]] \bigr)
\to  \bG(\widetilde{F}_{n-1,m})$ is contained in
$\bG^{\circ}\bigl(\widetilde{F}_{n-1,m}
[[t_n^{\frac{1}{m}}]] \bigr)$.
Since $(\HH/\HH^{\circ})(F_n)$ injects into $(\HH/\HH^{\circ})(\widetilde{F}_{n,m})
=(\bG/\bG^{\circ})(\widetilde{F}_{n,m})$,
the kernel of the specialization map $sp_n$ is contained in
$\HH^{\circ}(F_n)$.
The parahoric subgroup of $\bG^{\circ}(\widetilde F_{n,m})$ attached to
the point $\phi$ is $Q=\bG^{\circ}\Bigl(\widetilde{F}_{n-1,m}
[[t_n^{\frac{1}{m}}]] \Bigr)$
and we have $$
Q^*= \ker\bigl( Q
\to \bG^{\circ}(\widetilde{F}_{n-1,m}) \bigr)
$$
by the very definition of $Q^*$.  Hence $\ker(sp_n)= P \cap Q^*= P^*$
by Corollary
\ref{app-cor} applied to the point  $\phi$.

The group $\HH_{n-1}(F_{n-1})$ is a subgroup
of  $\HH_{n}(F_{n})$ which maps identically to itself by
$sp_n$, so we have to verify
that the  specialization
$h_{n-1}$ of an element
 $h \in \HH(F_n)$ belongs to $\HH_{n-1}(F_{n-1})$.
Specializing (\ref{id2}) at $t_n=0$,
we get

\begin{equation}\label{**}
\eta(\gamma) \, \ {^\gamma}h_{n-1} =
h_{n-1} \quad \forall \gamma \in \widetilde \Gamma_{n,m}.
\end{equation}

\noindent We now apply the relation (\ref{**}) to the generator
$\tau_n$ of the Galois group $\Gal\bigl(\widetilde{F}_{n,m}/
\widetilde{ F}_{n-1,m}((t_n)) \bigr)$; it yields

\begin{equation}\label{***}
\eta(\tau_n)   \, h_{n-1} =  h_{n-1},
\end{equation}
where $\eta(\tau_n)\in {\bG}(\widetilde{k})$, so that
$h_{n-1} \in \bG_{n-1}(\tilde F_{n-1,m})$.
Furthermore, the equality (\ref{**}) restricted to
$\tilde \Gamma_{n-1, m}$ shows that $h_{n-1} \in \HH_{n-1}(F_{n-1} )$.
This establishes the Claim.

We can now finish the induction process.
The group $\HH_{n-1}(F_{n-1})$ is a subgroup of
$\HH(F_n)$, so $$
\HH(F_n) = J_n \rtimes \HH_{n-1}(F_{n-1})
$$
where $J_n:= \ker(sp_n)$ is the ``pro-unipotent radical''  and hence
it is pro-solvable in $k$--spaces.
By using the induction hypothesis, we have
$$
\HH_{n-1}(F_{n-1}) =
(J_{n-1} \rtimes \cdots \rtimes J_1) \rtimes \bG_{n-1}^{\eta_{n-1}^{geo}}(k).
$$
Since $\bG_{n-1}^{\eta_{n-1}^{geo}}= \bG^{\eta^{geo}}$,
we conclude that
$$
\HH(F_n) =  (J_{n} \rtimes \cdots \rtimes J_1) \rtimes \bG^{\eta^{geo}}(k)
$$
as desired.

We have  $\bG^{\eta^{geo}}(k) \subset \HH(R_n)$, so  we get the
second 
equality as well.
\end{proof}

\section{Acyclicity, I}\label{Acyclicity1}

Let $\HH$ be a loop reductive
group scheme. We will denote by $H^1_{toral}(R_n, \HH)$ (resp. $H^1_{toral}(R_n, \HH)_{irr}$) 
the subset of $H^1(R_n, \HH)$ consisting of isomorphism classes of 
$\HH$-torsors $\EE$ such that the twisted $R_n$--group ${_\EE}\HH$ admits a 
maximal torus (resp. admits a maximal torus and is irreducible).

\begin{theorem}\label{acyc1} Let $\HH$ be a loop reductive
group scheme. 
Then the natural map
$$
H^1_{toral}(R_n, \HH)_{irr} \to  H^1(F_n,\HH)
$$
is injective.
\end{theorem}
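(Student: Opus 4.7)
The plan is to combine the standard torsion trick in non-abelian cohomology with the density theorem (Theorem \ref{density}) proved earlier. First, given $[\EE_1],[\EE_2]\in H^1_{toral}(R_n,\HH)_{irr}$ with the same image in $H^1(F_n,\HH)$, I would apply the torsion bijection $\tau_{\EE_1}^{-1}:H^1(R_n,\HH)\to H^1(R_n,{_{\EE_1}}\HH)$: it sends $[\EE_1]$ to the trivial class and $[\EE_2]$ to some class $[\EE]$ that is trivial over $F_n$. The new group $\HH':={_{\EE_1}}\HH$ inherits our three hypotheses: it is loop reductive (the underlying loop cocycle of $\HH$ and the loop cocycle representing $\EE_1$ multiply to a loop cocycle), it is irreducible (parabolic type is invariant under twisting, cf.\ Remark \ref{type}), and it is toral (the twist of a maximal torus of $\HH$ gives one in $\HH'$). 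We are thus reduced to showing that for such $\HH'$ the kernel of $H^1(R_n,\HH')\to H^1(F_n,\HH')$ is trivial.

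Next I would apply Theorem \ref{density}. Since $\HH'$ is irreducible over the normal ring $R_n$, by the Corollary following Proposition \ref{G_m} it contains no non-central split $\bG_m$-subgroup. The same holds for ${_\EE}\HH'$, and I would promote this to anisotropy over $F_n$: a split subtorus of $({_\EE}\HH')_{F_n}$ would, by properness of the scheme $\mathbf{Par}_{\bf t}$ of parabolic subgroups of fixed type and a good reduction argument on the regular ring $R_n$, produce a proper parabolic of ${_\EE}\HH'$ over $R_n$, contradicting irreducibility. Writing $[\EE]=[{_\eta}\HH']$ with $\eta$ a loop cocycle (available because $\HH'$ is toral), Theorem \ref{density} then gives a decomposition
\[
({_\eta}\HH')(F_n)\;=\;(J_n\rtimes\cdots\rtimes J_1)\cdot ({_\eta}\HH')(R_n),
\]
with each $J_i$ pro-solvable in $k$-vector spaces.

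Finally, the triviality of $[\EE]$ over $F_n$ means that the cocycle $\eta$ becomes a coboundary when evaluated in $\HH'(F_n\otimes_{R_n}\widetilde R_{n,m}\otimes_k\widetilde k)$; equivalently, there is a point in $({_\eta}\HH')(F_n)$ trivialising the corresponding torsor. Decomposing this point according to the density identity above and using that each $J_i$, being a direct limit of $k$-vector spaces, has vanishing continuous cohomology with respect to the finite Galois group $\widetilde\Gamma_{n,m}=\bmu_m^n(\widetilde k)\rtimes\Gal(\widetilde k/k)$ (additive Hilbert 90 / vanishing of $H^1$ of a $k$-vector space under a finite group action in characteristic $0$), I would absorb the pro-solvable factor into a modification of the cocycle and land in $({_\eta}\HH')(R_n)$, giving an $R_n$-trivialization of $\EE$. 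The main obstacle is precisely this last step: translating the group-level density decomposition into a cocycle-level statement, and rigorously verifying, at each finite Galois level $\widetilde\Gamma_{n,m}$, that successive quotients $J_i/J_{i+1}$ being $k$-vector spaces allow the pro-solvable factors to be absorbed into the trivializing element without altering the class in $H^1(R_n,\HH')$.
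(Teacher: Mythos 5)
Your opening move --- the torsion reduction to showing that the kernel of $H^1(R_n,\HH')\to H^1(F_n,\HH')$ is trivial for $\HH'$ loop reductive and irreducible --- matches the start of the paper's proof, but after that the strategies diverge and yours has two gaps. You want to apply Theorem~\ref{density} to ${_\EE}\HH'$ directly, and to that end you claim that irreducibility of ${_\EE}\HH'$ over $R_n$ can be promoted to anisotropy over $F_n$. This is false in general: by the Corollary after Proposition~\ref{G_m}, irreducibility only excludes \emph{non-central} split subtori, so a split $\bG_{m}$ in the radical of $\HH'$ is compatible with irreducibility yet makes the group isotropic already over $R_n$ (take, say, $\HH'=\bG_{m,R_n}\times\GG$ with $\GG$ anisotropic semisimple). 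Your good-reduction step is also off: the valuative criterion for the proper scheme ${\rm \bf Par}_{\bf t}$ extends $F_n$-points to $R_n$-points only over a Dedekind base, i.e.\ only for $n=1$. The paper handles the toral part structurally: Case~1 of its proof treats the adjoint anisotropic case (where irreducibility \emph{does} force anisotropy, the center being finite), and Case~2 reduces the general irreducible case to Case~1 through the central exact sequence $1\to\ZZ\to\HH\to\HH_{ad}\to1$, applying the adjoint case to $\HH_{ad}$.

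Your final step --- converting the density decomposition of $({_\eta}\HH')(F_n)$ into a modification of a trivializing element --- also has a domain mismatch: an $F_n$-trivialization of the torsor is an $h\in\HH'(\widetilde R_{n,m}\otimes_{R_n}F_n)$ with $\eta_\sigma=h^{-1}\,{}^\sigma h$, which is not a point of $({_\eta}\HH')(F_n)$ when $\eta$ is nontrivial, so the density decomposition does not directly apply to $h$. The paper sidesteps all cocycle manipulation: it chases the long exact sequences for $1\to\HH\to\bAut(\HH)\to\bOut(\HH)\to1$ and $1\to\ZZ\to\HH\to\HH_{ad}\to1$, using the known bijections $H^1_{toral}\big(R_n,\bAut(\HH)\big)\simeq H^1\big(F_n,\bAut(\HH)\big)$ and $H_{\et}^1(R_n,\ZZ)\simeq H^1(F_n,\ZZ)$, and applies Theorem~\ref{density} only to show that $\bAut(\HH)(R_n)$ and $\bAut(\HH)(F_n)$ have the same image in the \emph{finite} group $\bOut(\HH)(F_n)$, respectively that $\HH_{ad}(R_n)$ and $\HH_{ad}(F_n)$ have the same image in the \emph{torsion} group $H_{\et}^1(R_n,\ZZ)$; there the pro-solvable factors $J_i$, being built from $k$-vector spaces in characteristic $0$, are uniquely divisible and so have trivial image automatically. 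Your instinct that the $J_i$ should ``disappear'' is correct, but the paper exploits it on the image side of a boundary map rather than by reworking a trivializing cocycle.
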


\begin{proof}
By twisting, it is enough to show that for an irreducible
loop reductive group $\HH$ the canonical map
$H^1_{toral}(R_n, \HH) \to H^1(F_n,\HH)$ has trivial kernel. Indeed reductive  $R_n$--group schemes admitting a maximal torus are precisely
the loop reductive groups \cite[theo. 6.1]{GP3}.
We now reason by successive cases.
\smallskip

\noindent {\it  Case 1}: $\HH$ {\it is adjoint and anisotropic.}
We may view $\HH$  as a twisted form of a Chevalley group scheme
$\dH_{R_n}$
by a loop cocycle $\eta : \pi_1(R_n) \to \Aut(\dH)(\ol k)$.
We have the following commutative diagram of torsion bijections
$$
\begin{CD}
H^1_{toral}\big(R_n, \bAut(\HH)\big) @>>> H^1\big(F_n, \bAut(\HH)\big)  \\
@V{\tau_\eta}V{\simeq}V @V{\tau_\eta}V{\simeq}V \\
H^1_{toral}\big(R_n, \bAut(\dH)\big) @>>> H^1\big(F_n, \bAut(\dH)\big) . \\
\end{CD}
$$
The vertical maps are bijective by \cite[III 2.5.4]{Gir} and Remark \ref{klooptorsion},
while the bottom map is bijective by \cite[theo. 8.1]{GP3}. We thus have a bijection
$$
\psi:H^1_{toral}\big(R_n, {\bAut}(\HH)\big) \simlgr H^1(F_n, {\bAut}(\HH)) .
$$
The exact sequence $1 \to \HH \to {\bAut}(\HH) \to {\bOut}(\HH) \to 1$
gives rise to the commutative diagram of exact sequences of pointed sets
\[
\minCDarrowwidth20pt
\begin{CD}
{\bAut}(\HH)(R_n) @>\delta >>
\bOut(\HH)(R_n) @>{\varphi}>>
H_{\et}^1(R_n, \HH) @>>>
 H_{\et}^1\big(R_n, \bAut(\HH)\big) \\
@VVV \mid \mid && @VVV @VV\psi V \\
{\bAut}(\HH)(F_n) @>\gamma >>
\bOut(\HH)(F_n) @>>>
H^1(F_n, \HH) @>>>
 H^1\big(F_n, \bAut(\HH)\big) .\\
\end{CD}
\]

Let $v \in H_{\et}^1(R_n, \HH)$ be a toral class
mapping to $1 \in H^1(F_n, \HH)$.
Since $\psi$ is bijective there exists $u \in \bOut(\HH)(R_n)$
such that $v= \varphi(u)$ and $u\in {\rm Im}\,\gamma$.
Since
$\bOut(\HH)(R_n)$ is a finite group, the Density Theorem~\ref{density} shows that
${\bAut}(\HH)(R_n)$ and ${\bAut}(\HH)(F_n)$ have the same image in ${\bOut}(\HH)(F_n)$.
So  $u\in {\rm Im}\,\delta$, which implies that
 $\gamma=1 \in H_{\et}^1(R_n,\HH)$.

\smallskip

\noindent{\it Case 2:} $\HH$ {\it is irreducible.}
Set $\ZZ= Z(\HH)$; it is an $R_n$--group of multiplicative type and
we have an exact sequence of $R_n$--group schemes
$$
1 \to \ZZ \buildrel i \over \longrightarrow  \HH \to \HH_{ad} \to 1.
$$
Here the adjoint group $\HH_{ad}$ 
is anisotropic since $\HH$ is irreducible.
This exact sequence gives rise to
the diagram
$$
\begin{CD}
\HH_{ad}(R_n) @>{\varphi_{R_n}}>> H_{\et}^1(R_n, \ZZ) @>{i_*}>> H_{\et}^1(R_n, \HH)
@>>>H_{\et}^1(R_n, \HH_{ad}) \\
@VVV @VV{\simeq}V  @VVV @VVV \\
\HH_{ad}(F_n) @>{\varphi_{F_n}}>> H^1(F_n, \ZZ) @>>> H^1(F_n, \HH)
@>>>H^1(F_n, \HH_{ad}) .\\
\end{CD}
$$
Note that the second vertical map is bijective by
\cite[prop. 3.4.(3)]{GP2} since $\ZZ$ is of finite type
(\cite[ XII, \S 3]{SGA3}).

Let $v \in H_{\et}^1(R_n, \HH)$ be a toral class mapping
to $1 \in H^1(F_n, \HH)$.
Taking into account the adjoint anisotropic case, a diagram chase
provides  an element $u \in
H_{\et}^1(R_n, \ZZ)$ such that $v= i_*(u)$ and $u$
belongs to the image of
the characteristic map $\varphi_{F_n}$. Since
$H_{\et}^1(R_n, \ZZ)$ is an abelian torsion  group, the Density
Theorem ~\ref{density} shows that
$\HH_{ad}(F_n)$ and $\HH_{ad}(R_n)$ have the same image in $H_{\et}^1(R_n, \ZZ)$.
So  $u$ belongs to the image of $\varphi_{R_n},$ and this implies that
$v=i_*(u) =1 \in H_{\et}^1(R_n,\HH)$ as desired.
\end{proof}

\section{Conjugacy of certain parabolic subgroup schemes and maximal split
tori}

\begin{theorem}\label{conj-loop} Let $\HH$ be a loop
 reductive group scheme over $R_n$.
There exists a unique $\HH(R_n)$--conjugacy class of

\smallskip

\noindent
{\rm (a)} Couples $(\LL,\PP)$ where $\PP$ is a minimal parabolic
$R_n$--subgroup scheme
of $\HH$  and $\LL$ is a Levi subgroup of $\PP$
such that $\LL$ is  a loop reductive group scheme.

\smallskip

\noindent
{\rm (b)} Maximal split subtori $\SS$ of $\HH$ such that
  $Z_\HH(\SS)$ is a loop reductive group scheme.
\end{theorem}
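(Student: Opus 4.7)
The plan is to establish (a) and then deduce (b) via the natural correspondence between such couples and such maximal split subtori. Given $(\LL,\PP)$ as in (a), let $\SS$ be the maximal split subtorus of the radical of $\LL$; Proposition \ref{centralizersplit} yields $\LL=Z_\HH(\SS)$, so $Z_\HH(\SS)$ is loop reductive, and since $\PP$ is minimal, Proposition \ref{flag}(d) shows that $\SS$ is maximal split in $\HH$. Conversely, given $\SS$ as in (b), set $\LL=Z_\HH(\SS)$ and pick (via Lemma \ref{facts}(3)) a parabolic $\PP$ of $\HH$ having $\LL$ as a Levi subgroup; Lemma \ref{wellknown} and Proposition \ref{flag}(d) then force $\PP$ to be minimal. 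These two processes are mutually inverse and manifestly $\HH(R_n)$-equivariant, so (a) implies (b). Existence of a couple $(\LL,\PP)$ in (a) is produced from the loop presentation $\HH\simeq {_\eta}\bH_{0,R_n}$, exploiting the Density Theorem \ref{density} and Richardson's theorem to ensure that a minimal parabolic can be chosen with a Levi subgroup inherited from the loop structure.

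The heart of the argument is uniqueness in (a). Given two couples $(\LL,\PP)$ and $(\LL',\PP')$, form the transporter
$$
T\;=\;{\rm \bf Trans}_\HH\bigl((\LL,\PP),(\LL',\PP')\bigr)\;\subset\;\HH,
$$
consisting of those $g\in\HH$ with $g\PP g^{-1}=\PP'$ and $g\LL g^{-1}=\LL'$. The simultaneous normalizer of $(\LL,\PP)$ in $\HH$ equals $\LL$ itself (using $N_\HH(\PP)=\PP$ and $N_\PP(\LL)=\LL$), and $T$ is Zariski locally non-empty by \cite[XXVI, 6.16]{SGA3} applied to the minimal parabolic subgroups (of the same type, as read off over $F_n$) on local rings. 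Hence $T$ is a right $\LL$-torsor whose class $[T]\in H_{\et}^1(R_n,\LL)$ is precisely the obstruction to $\HH(R_n)$-conjugacy. The twist ${_T}\LL$ is canonically isomorphic to $\LL'$, which is loop reductive and so admits a maximal torus by \cite[Theorem 6.1]{GP3}, and is irreducible by Proposition \ref{flag}(d) applied to the minimal parabolic $\PP'$. Consequently $[T]\in H^1_{toral}(R_n,\LL)_{irr}$.

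We now invoke Theorem \ref{acyc1}, which provides an injection $H^1_{toral}(R_n,\LL)_{irr}\hookrightarrow H^1(F_n,\LL)$. It therefore suffices to show that $[T]$ dies in $H^1(F_n,\LL)$. Over the field $F_n$, any two minimal parabolic subgroups of $\HH_{F_n}$ are $\HH(F_n)$-conjugate by the classical theorem of Borel-Tits, and any two Levi subgroups of a given parabolic are conjugate under its unipotent radical. Composing these two conjugacies produces an element of $T(F_n)$, so $[T]=1$, and $(\LL,\PP)$, $(\LL',\PP')$ are $\HH(R_n)$-conjugate. The main obstacle anticipated is the existence part of (a), where one must arrange a delicate interaction between the loop cocycle defining $\HH$ and the parabolic-Levi data; by contrast, uniqueness is a clean application of the transporter technique combined with the acyclicity Theorem \ref{acyc1}.
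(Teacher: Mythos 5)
Your uniqueness argument is correct and is, in substance, the argument of the paper: the paper works with the $R_n$--scheme $\YY = \HH/\PP$ of parabolics of type $\t(\PP)$ and the exact sequence of \'etale sheaves $1 \to \PP \to \HH \to \YY \to 1$, so that the point $y$ determined by $\PP'$ yields a $\PP$--torsor $f^{-1}(y)$ which (via the bijection $H^1_{\et}(R_n,\LL)\simeq H^1_{\et}(R_n,\PP)$ of \cite[XXVI, 3.2]{SGA3}) is identified with an $\LL$--torsor whose twist of $\LL$ is isomorphic to $\LL'$. Your transporter $T={\rm \bf Trans}_\HH\bigl((\LL,\PP),(\LL',\PP')\bigr)$ is precisely that $\LL$--torsor, and your verification that $[T]\in H^1_{toral}(R_n,\LL)_{irr}$ and that $[T]_{F_n}=1$ coincides with the paper's Claim and the subsequent Borel--Tits argument over $F_n$; in both treatments, Theorem \ref{acyc1} then finishes the job. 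Your reduction of (b) to (a) via the ``maximal split subtorus of the radical of $\LL$'' correspondence is also exactly what the paper does.

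The one place where your proposal departs from the paper -- and where it is not adequate -- is the existence half of (a). The paper first reduces to the semisimple simply connected case by Lemma \ref{red-sc}, and then existence is \emph{not} obtained from the Density Theorem \ref{density} or Richardson's theorem. Instead, the paper invokes the ``Witt--Tits decomposition'' for loop torsors, \cite[Cor.\,8.4]{GP3}: letting $I$ be the set of uncircled vertices in the Witt--Tits diagram of $\HH_{F_n}$, that result puts $[\eta]$ in the image of $H^1_{loop}\bigl(R_n, {\bAut}(\dH_{ad}, \bP_{I,ad}, \bL_{I,ad})\bigr)_{irr} \to H^1_{loop}\bigl(R_n, {\bAut}(\dH_{ad})\bigr)$, so one may assume $\eta$ takes values in ${\bAut}(\dH,\bP_I,\bL_I)(\ol k)$, and then $\PP={_\eta}(\bP_I)$, $\LL={_\eta}(\bL_I)$ is the desired couple, with minimality of $\PP$ read off over $F_n$. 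The Density Theorem (and Richardson) enter earlier, in the proof of the acyclicity Theorem \ref{acyc1} that you correctly use for uniqueness; they are not the mechanism for existence. Your phrase ``ensure that a minimal parabolic can be chosen with a Levi subgroup inherited from the loop structure'' gestures in the right direction but the actual ingredient is the structure theorem on loop cocycles, not density.
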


\begin{remark} The counter-example in \S\ref{counterexample} shows that the
assumption  that $\LL$ and $Z_{\HH}(\SS)$ be loop reductive
group schemes is not superflous.
\end{remark}

\begin{proof} (i) {\it Reduction 
to the semisimple simply connected case.}
Let $\HH^{sc}$ be the simply connected covering of the derived group scheme
of $\HH,$ and let $\EE$ be the radical torus of $\HH$.
There is a canonical central isogeny \cite[\S 1.2]{H}
$$
1 \to \bmu \to \HH^{sc} \times \EE \buildrel f \over \longrightarrow \HH \to 1 .
$$
Let $(\LL,\PP)$  be a pair where  $\PP$ is  a  parabolic
subgroup of $\HH$ containing a Levi subgroup $\LL$. Then 
$$
f^{-1}(  \PP) = \PP^{sc} \times \EE, f^{-1}(  \LL) = \LL^{sc} \times \EE  
$$
where $\PP^{sc}$ is a minimal parabolic subgroup of the $R_n$--group $\HH^{sc}$ and
$\LL^{sc}$ is a Levi subgroup of $\PP^{sc}.$ 
Conversely, from a couple $(\MM, \QQ)$ for $\HH^{sc}$, 
we can define a couple $\bigl( (\MM \times \EE)/ \bmu , (\QQ \times \EE)/ \bmu )$
for $\HH$. 
By \cite[cor. 6.3]{GP3}, loop group schemes are exactly those carrying a maximal
torus.  Since the last property is insensitive to central extensions \cite[XII.4.7]{SGA3}, 
the correspondence described above exchanges loop objects $\LL$ with loop objects $\LL^{sc}$.
Also it exchanges minimal parabolics of $\HH$ with minimal parabolics of  $\HH^{sc}$.
Thus without loss of generality we may assume that $\HH$ is simply connected.





%

\smallskip

\noindent
(ii) {\it Existence (a).} 
Let $\dH$ 
be the Chevalley $k$--form of $\HH$ and let
$\eta : \pi_1(R_n) \to {\bAut}(\dH)(\overline  k)$
be a loop cocycle such that $\HH  = {_\eta({\dH_{R_n}})}$.
Let $(\bT, \bB)$ be a Killing couple of $\dH$ and  $\Pi \subset
\Delta(\dH,\bT)$ be the base of the 
root system associated to $(\bT, \bB)$.
We denote by $\dH_{ad}$ the adjoint group of $\bH$ and  by   $(\bT_{ad}, \bB_{ad})$ 
the corresponding Killing couple. We have
$\bAut(\bH)= \Aut(\dH_{ad})$.
For each $I \subset \Pi$, we have the standard parabolic
subgroup $\bP_I$ of $\dH$ and its Levi subgroup $\bL_I$, as well as $\bP_{I, ad}$ and $\bL_{I, ad}$  for $\bH_{ad}.$

Let $I\subset \Pi$ be 
the subset of circled vertices in the Witt-Tits diagram of $\HH_{F_n}$.
The version of  the ``Witt-Tits decomposition''  given in
\cite[cor. 8.4]{GP3} applied to $\Aut(\dH_{ad}$)  shows that
$$
[\eta] \in {\rm Im}\Bigl( H^1_{loop}\big(R_n,
{\bAut}(\dH_{ad}, {\bP}_{I,ad}, {\bL}_{I,ad})\big)_{irr} \to
H^1_{loop}\big(R_n, {\bAut}(\dH_{ad})\big) \Bigr).
$$
Thus we may assume that $\eta$ has values in
$${\bAut}(\dH, {\bP}_{I}, {\bL}_{I})(\overline  k)= \break 
{\bAut}(\dH_{ad}, {\bP}_{I,ad}, {\bL}_{I,ad})(\overline  k).$$
The twisted  $R_n$--group schemes $\PP= {_\eta ({\bP}_I)}$ and
$\LL= {_\eta ({\bL}_I)}$ are as desired for
$\PP_{F_n}$ is a minimal $F_n$--parabolic subgroup of
$\HH_{F_n}$ by the definition
of the Witt-Tits index.


\smallskip

\noindent 
(iii) {\it Existence (b).} Consider the pair $(\LL,\PP)$ constructed in (ii) and let 
$\SS$ be the maximal split subtorus of the
radical $\TT$ of $\LL$.
By Proposition~\ref{centralizersplit} we have $Z_{\HH}(\SS)=\LL$ so that 
 $Z_{\HH}(\SS)$ is a loop reductive group. To show that $\SS$
is a maximal split torus of $\HH$ 
it suffices to establish that so is $\SS_{F_n}$. 

Assume that $\SS_{F_n}\subset \SS'$ is a proper inclusion where $\SS'$ is a split torus
in $\HH_{F_n}$. By construction $\PP_{F_n}$ is a minimal parabolic subgroup
over $F_n$. Hence $\LL_{F_n}=C_{\HH_{F_n}}(\SS_{F_n})=C_{\HH_{F_n}}(\SS').$
This implies that $\SS'$ is contained in the radical $\TT_{F_n}$ of $\LL_{F_n}$.
But by Lemma \ref{maxtor}, 
$\SS$ is still maximal split in $\TT$ over $K_n$ and hence over $F_n$ because
$\TT$ is split over a Galois extension $\widetilde{R}_{n,m}/R_n$ for some integer
$m$ -- a contradiction. 

\smallskip

\noindent
(iv) {\it Conjugacy (a).}
 Let $(\LL, \PP)$ be the couple constructed in (ii).
Consider the $R_n$--scheme  $\YY=\HH/\PP$ of parabolic subgroups  of
type ${\rm \bf t}(\PP)$. 
The exact sequence 
$1 \to \PP \to \HH \buildrel f \over \longrightarrow \YY \to 1$
induces  exact sequences of pointed sets 
$$
\begin{CD}
\HH(R_n) @>\psi>> \YY(R_n) @>{\varphi}>> H_{\et}^1(R_n, \PP) @>>> H_{\et}^1(R_n, \HH) \\
&&&& @AA{\simeq}A \\
&&&&H_{\et}^1(R_n, \LL) \\
\end{CD}
$$
(note that the natural mapping $H_{\et}^1(R_n, \LL) \to H_{\et}^1(R_n,\PP)$
is a  bijection  by
\cite[XXVI, 3.2]{SGA3})
and by base change
$$
\begin{CD}
\HH(F_n) @>\psi_{F_n}>> \YY(F_n) @>{\varphi_{F_n}}>> H^1(F_n, \PP)
@>>> H^1(F_n, \HH) \\
&&&& @AA{\simeq}A \\
&&&&H^1(F_n, \LL) \\
\end{CD}
$$

Let $(\MM, \QQ)$ be another couple satisfying the conditions of  Theorem
\ref{conj-loop}. By \cite{GP3},  $\QQ_{F_n}\subset \HH_{F_n}$ is still  
a minimal parabolic subgroup; in particular
$\QQ$ has the same type ${\rm \bf t}(\PP)$ and  hence it corresponds to 
a point  $y \in \YY(R_n)$.

\begin{claim}
$\varphi(y) \in   H^1_{toral}(R_n, \LL) \simeq H^1_{toral}(R_n, \PP) $.
\end{claim}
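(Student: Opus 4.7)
The plan is to trace through the construction of $\varphi(y)$, identify the Levi twist explicitly as $\MM$, and then invoke the loop reductivity hypothesis on $\MM$.

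First I would unwind $\varphi(y)$ concretely. By the characteristic map exact sequence \cite[III.3.2.2]{Gir} displayed above, $\varphi(y) \in H_{\et}^1(R_n, \PP)$ is represented by the fiber $f^{-1}(y)$ of $f: \HH \to \YY$ over $y$, equivalently by the transporter scheme ${\rm \bf Transp}_{\HH}(\PP, \QQ)$, which is a right $\PP$--torsor via right multiplication inside $\HH$. A standard descent calculation identifies the inner twist ${_{\varphi(y)}\PP}$ with the stabilizer of $y$ in $\HH$, which is the parabolic $\QQ$ itself.

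Next I would transfer the problem to the Levi. The bijection $H_{\et}^1(R_n, \LL) \simeq H_{\et}^1(R_n, \PP)$ from \cite[XXVI, 3.2]{SGA3} is induced by the Levi decomposition $\PP = \LL \ltimes R_u(\PP)$; under this bijection, for a class $[\xi] \in H_{\et}^1(R_n, \LL)$, the twisted group ${_\xi\LL}$ is canonically the Levi subgroup of the twisted parabolic corresponding to the image class. Applied to our situation, with $[\xi]$ the preimage of $\varphi(y)$, this yields ${_\xi\LL} = \MM$, since $\MM$ is the prescribed Levi subgroup of $\QQ = {_{\varphi(y)}\PP}$.

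Finally, by hypothesis the Levi $\MM$ of the couple $(\MM, \QQ)$ is a loop reductive $R_n$--group scheme. By \cite[cor. 6.3]{GP3} (already invoked in the proof of Theorem \ref{acyc1}), a reductive $R_n$--group is loop reductive if and only if it admits a maximal torus. Hence ${_\xi\LL} = \MM$ admits a maximal torus, so $[\xi] \in H^1_{toral}(R_n, \LL)$, and via the bijection $\varphi(y) \in H^1_{toral}(R_n, \PP)$. The main obstacle is a bookkeeping one: one must verify carefully that the inner twist of $\PP$ by the $\PP$--torsor ${\rm \bf Transp}_{\HH}(\PP,\QQ)$ is literally $\QQ$ and that its canonical Levi subgroup is $\MM$ rather than some other $R_n$--conjugate Levi. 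Once these identifications are fixed the conclusion is immediate from the loop reductivity of $\MM$.
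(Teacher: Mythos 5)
Your proposal follows essentially the same route as the paper: represent $\varphi(y)$ by the $\PP$--torsor $f^{-1}(y)$, lift to an $\LL$--torsor class $[\xi]$ via the bijection from \cite[XXVI, 3.2]{SGA3}, identify the twist of $\PP$ with $\QQ$, and conclude using the loop reductivity of $\MM$. The ``main obstacle'' you flag at the end is, however, not actually an obstacle, and the way the paper dispatches it is worth noting: one does not need ${_\xi\LL}$ to be literally $\MM$. The twisted group ${_\xi\LL}$ is \emph{some} Levi subgroup of ${_\xi\PP}\simeq\QQ$, and by \cite[XXVI, 1.8]{SGA3} all Levi subgroups of $\QQ$ are conjugate under $R_u(\QQ)(R_n)$; hence ${_\xi\LL}$ is $R_n$--isomorphic to $\MM$. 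Since $\MM$ carries a maximal torus (it is loop reductive by hypothesis, and by \cite[cor.\,6.3]{GP3} this is equivalent to admitting a maximal torus), so does the isomorphic group ${_\xi\LL}$, and $[\xi]\in H^1_{toral}(R_n,\LL)$. Also note that the identification of the twist ${_\xi\PP}$ with $\QQ$, which you describe as ``a standard descent calculation,'' is in fact a compatibility the authors found was absent from Giraud's book and for which they cite \cite[lemme 4.2.33]{De}; stating that reference would tighten the argument.
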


Indeed, $\varphi(y)$ is  the class of the $\PP$--torsor $\EE:=f^{-1}(y)$.
We can assume without loss of generality that
$\EE$ is obtained from an $\LL$--torsor $\FF$.
Then $\QQ$ is isomorphic\footnote{Surprisingly enough, this compatibility is
 not in Giraud's book. A proof can be found in 
\cite[lemme 4.2.33]{De}.} to
the twist ${_{\FF} \PP}$,
and ${_{\FF} \LL}$ is  a Levi subgroup of the $R_n$--group ${_{\FF} \PP}$.
Since Levi subgroups of  ${_{\FF} \PP}$ are conjugate
under $R_u({_{\FF} \PP})(R_n)$ \cite[XXVI, 1.8]{SGA3}, it follows that
${_{\FF} \LL}$ is $R_n$--isomorphic to $\MM$.
The group scheme ${_{\FF} \LL}$ carries then a maximal
torus and  the claim is proved.

On the other hand, 
since $\PP_{F_n}$ and
$\QQ_{F_n}$ are minimal
parabolic subgroups of $\HH_{F_n}$ 
they are conjugate under $\HH(F_n)$.
Then $y$ viewed as an element of $\YY(F_n)$ is in the image of $\psi_{F_n}$,
hence $\varphi_{F_n}(y)=1$. It follows that
$\varphi(y)$ belongs to the kernel of
$$
H^1_{toral}(R_n, \LL)_{irr} \to H^1(F_n, \LL)
$$
which is trivial by Theorem~ \ref{acyc1}. Thus $y\in {\rm Im}\,\psi$,
i.e. $\PP$ and $\QQ$ are $\HH(R_n)$--conjugate
and so are the couples    $(\LL, \PP)$ and  $(\MM, \QQ)$.


\smallskip

\noindent (v) {\it Conjugacy (b).} We still denote by $(\LL,\PP)$
the couple constructed in (ii).
Let $\SS'$ be a maximal split subtorus of $\HH$
such that  its centralizer $\LL'= Z_\HH(\SS')$ is a loop reductive group
scheme. 
By Lemma \ref{facts}, $Z_{\HH}(\SS')$ is a Levi subgroup of a
parabolic subgroup of $\PP'$ of $\HH$.
By Proposition \ref{flag} (c), $\PP'$ is a minimal parabolic subgroup
of $\HH$.
%
%
By (iv), the couple $(\LL', \PP')$ is conjugate under $\HH(R_n)$ to
  $(\LL, \PP)$. We may thus assume that $\LL=\LL'$, i.e.
$Z_{\HH}(\SS)= Z_{\HH}(\SS')$. It follows $\SS'$ is a  central
split subtorus
of $\LL$, hence  $\SS' \subset \SS$. But $\SS'$ is a maximal split subtorus
of $\HH$, so we conclude that $\SS=\SS'$ as desired.
\end{proof}























\section{Applications to infinite-dimensional Lie Theory}\label{applications}



Throughout this section we assume that $k$ is algebraically closed of characteristic zero, $\dG$ 
is a simple simply connected Chevalley group over $k$, and $\dg$ its Lie algebra. 
We fix integers $n \geq 0$, $m > 0$ and an
$n$--tuple $\bs = (\sigma  _1,\dots,\sigma  _n)$  of commuting
elements of ${\rm Aut}_k(\dg)$ satisfying $\sigma  ^{m}_i = 1.$ Let 
$R=R_n$ and $\widetilde{R}=R_{n,m}$.
Recall that $\widetilde{R}/R$ is Galois and that we can identify $\Gal(\widetilde{R}/R)$ 
with ${(\Z/m\Z )}^n$ via our choice of compatible primitive roots of unity. 

Recall also from the Introduction the multiloop algebra based on $\dg$ corresponding to $\bs$, is
$$
L(\dg,\bs) = \bigoplus_{{(i_1,\dots ,i_n)\in \Z^n}}
\dg_{i_1\dots i_n} \otimes t^{\frac{i_1}{m}}_1 \dots
t^{\frac{i_n}{m}}_n \subset \dg\otimes _k \widetilde{R} 
$$
It is a twisted form of the $R$--Lie algebra $\dg \otimes_k R$ which is split by $\widetilde {R}$.
The $\widetilde{R}/R$ form $L(\dg,\bs) $ is given by a natural loop cocycle 
$$\eta = \eta(\bs) \in Z^1\big(\Gamma, {\bAut}(\dg)(k)\big) \subset 
Z^1\big(\Gamma, {\bAut}(\dg)(\widetilde{R})\big).$$
Since $\bAut(\dg) \simeq {\bAut}(\dG)$ we can also consider by means of $\eta$ the twisted 
$R$--group $\GG = {_\eta}\dG_R.$ As before we denote the Lie algebra of 
$\GG$ by $\gg.$ 
Clearly,
$\gg \simeq L(\dg,\bs)$.

\subsection{Borel-Mostow MAD subalgebras} By a Theorem of Borel and Mostow \cite{BM} there exists a Cartan subalgebra $\dh$ 
of $\dg$ that is  stable under the action of $\bs$ (by which we mean that each of the $\sigma_i$ stabilizes $\dh).$ By restricting $\bs$ to $\dh$ we can consider the loop algebra based on $\dh$ with respect to $\bs,$ 

$$
L(\dh,\bs) = \bigoplus_{{(i_1,\dots ,i_n)\in \Z^n}}
\dh_{i_1\dots i_n} \otimes t^{\frac{i_1}{m}}_1 \dots
t^{\frac{i_n}{m}}_n \subset \dh\otimes _k \widetilde{R} 
$$

Let $\bT$ be the maximal torus of $\dG$ corresponding to $\dh. $ Denote by 
$\bT^{{\bs}}$ (resp.
$\dh^{\bs}$) the fixed point subgroup of $\bT$ (resp. subalgebra of $\dh$ ) 
under $\bs$, i.e the elements of $\bT$ 
(resp. $\dh$) that are fixed by each of the $\sigma_i.$ 
Since the torus  $\bT$ is also $\bs$--stable, just as above,  we can consider its twisted form 
$\TT = {_\eta}\bT_R$ and the corresponding Lie algebra $\hh={_\eta}\dh_R$. 
The same formalism already mentioned yields that $\hh \simeq L(\dh,\bs).$

Let $\TT_d$ be the maximal split torus of $\TT.$ It is easy to see that
$$
\TT_d\simeq\bT^{\bs}_R ={_\eta}(\bT^{\bs}_R)\subset\GG= {_\eta\bG_R}.
$$
 According to Remark \ref{m(S)}  its Lie algebra $\tt_d$ contains a unique maximal subalgebra $\mm$ which is an AD subalgebra of $\gg.$ The description of this algebra is quite simple:
$$ \mm = \dh^{\bs} \otimes_k 1=\dh_{0,\ldots,0} \otimes_k 1 \subset L(\dg , \bs) \simeq \gg.$$ 

By  Theorem \ref{maincorrespondence} $\mm$ is a MAD subalgebra if and only if 
$\TT_d$ is a maximal split torus of $\GG,$ in which case $\mm = \mm(\TT_d).$  We will 
call  MAD subalgebras of a multiloop algebra which are of this form {\it Borel Mostow} 
MAD subalgebras of $\gg.$

Clearly, $Z_\gg(\mm)$ is  precisely the multiloop algebra $L\big(Z_{\dg}(\dh^{\bs}), \bs\big).$
Note that by Proposition~\ref{centralizer}, $Z_{\dg}(\dh^{\bs})$ is 
the Lie algebra of the reductive $k$--group
$\dH := Z_{\dG}(\dh^{\bs})=Z_{\dG}(\bT^{\bs})$ and hence by
twisting we conclude that $Z_{\gg}(\mm)$ is the Lie algebra of 
$Z_{\GG}(\TT_d) =Z_{\GG}(\bT^{\bs}_R)\simeq {_\eta}\dH_R.$

\begin{proposition}\label{propBM} {\rm (1)} 
$Z_{\GG}(\TT_d) $ is a loop reductive group.

\noindent
{\rm (2)} $\mm$ is a MAD subalgebra if and only if the dimension  of $\dh_{0,...,0}$ is maximal among
 the Cartan subalgebras of $\dg$ normalised  by $\bs$.
In particular,  Borel-Mostow MAD subalgebras exist.
\end{proposition}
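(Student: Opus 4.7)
For (1), the key observation is that each $\sigma_i$ acts trivially on $\bT^{\bs}$ by definition, so it stabilizes the reductive $k$-subgroup $\dH = Z_\dG(\bT^{\bs})$. Consequently the cocycle $\eta \in Z^1(\Gamma, \bAut(\dg)(k))$ restricts to a cocycle $\eta|_\dH \in Z^1(\Gamma, \bAut(\dH)(k))$, and this restriction, being constant-valued in $k$-points and factoring through the finite quotient $\Gamma$ of $\pi_1(\XX,a)$, is a loop cocycle in the sense of Definition~\ref{looptorsor}. Thus $Z_\GG(\TT_d) = {_\eta}\dH_R$ is loop reductive by Definition~\ref{defkloopgroup}.

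For (2), I would first reduce the equivalence to a rank identity by combining Theorem~\ref{maincorrespondence} with Theorem~\ref{conj-loop}. By Theorem~\ref{maincorrespondence}, $\mm$ is a MAD iff $\TT_d = \bT^{\bs}_R$ is a maximal split torus of $\GG$. Let $r$ denote the $F_n$-rank of $\GG_{F_n}$; by Theorem~\ref{conj-loop}(b) and its Corollary, every maximal split torus of $\GG$ with loop reductive centralizer has rank $r$. Since every $\bs$-stable Cartan $\dh'$ of $\dg$ produces an $R$-split subtorus $(\bT')^{\bs}_R$ of $\GG$ of rank $\dim (\dh')^{\bs}$, one has $\dim (\dh')^{\bs} \le r$. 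Moreover $\TT_d$ is maximal split iff $\rank \TT_d = r$: the ``if'' direction is immediate, since a split supertorus of $\TT_d$ of the same rank must coincide with $\TT_d$ by comparison of character lattices; and the ``only if'' follows from part~(1), which gives $Z_\GG(\TT_d)$ loop reductive, combined with Theorem~\ref{conj-loop}(b), which forces the $\GG(R)$-conjugacy of $\TT_d$ to the reference maximal split torus $\SS$ of rank $r$. Hence $\mm$ is a MAD iff $\dim \dh^{\bs} = r$, and in view of $\dim (\dh')^{\bs} \le r$ for every $\bs$-stable $\dh'$, this matches ``$\dim \dh^{\bs}$ is maximal'' precisely when the maximum $d(\bs) := \max_{\dh'} \dim (\dh')^{\bs}$ equals $r$.

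The main obstacle is therefore the identity $d(\bs) = r$. My plan is to establish it via the Witt--Tits analysis from the existence portion of Theorem~\ref{conj-loop}: after replacing $\eta$ by a cohomologous cocycle ---which amounts to simultaneously conjugating $\bs$ and $\dh$ by some $g \in \dG(k)$ and therefore leaves $d(\bs)$ invariant--- one may assume that $\eta$ takes values in $\bAut(\dG, \bP_I, \bL_I)(k)$, where $I$ is the uncircled subset of the Witt--Tits index of $\GG_{F_n}$. Under this normalization, the reference torus $\SS$ is the maximal split subtorus of the radical of $\LL = {_\eta}\bL_I$, which one identifies with $\bS_R$ where $\bS := (\bZ_I^{\bs})^{\circ}$ is the connected $\bs$-fixed subtorus of the central torus $\bZ_I$ of $\bL_I$, a $k$-subtorus of $\dG$ of rank $r$ (by the standard equality $\rank (\bZ_I^{\bs})^{\circ} = \rank X(\bZ_I)^{\bs}$). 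Applying the classical Borel--Mostow theorem to the $\bs$-stable reductive $k$-group $Z_\dG(\bS)$ yields a $\bs$-stable maximal torus $\bT_0 \supseteq \bS$ of $\dG$; the corresponding $\bs$-stable Cartan $\dh_0 = \mathrm{Lie}(\bT_0)$ satisfies $\bT_0^{\bs} \supseteq \bS$, so $\dim \dh_0^{\bs} \ge \rank \bS = r$, whence $d(\bs) = r$. The ``in particular'' assertion is then immediate: $d(\bs)$ is a maximum over the nonempty (by classical Borel--Mostow) finite collection of dimensions $\dim (\dh')^{\bs}$, hence achieved by some $\bs$-stable $\dh$, which is then a Borel--Mostow MAD.
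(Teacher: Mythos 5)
Your proof is correct and follows essentially the same route as the paper: part (1) is the observation that $\eta$ normalizes $\dH = Z_\dG(\bT^{\bs})$ and hence twists it as a loop cocycle, and part (2) reduces to producing a $\bs$-stable Cartan realizing the $F_n$-rank $r$ via the Witt--Tits normalization of $\eta$ into $\bAut(\dG,\bP_I,\bL_I)(k)$ followed by an application of the classical Borel--Mostow theorem to the centralizer of $\bS$. The only cosmetic differences are that you state the reduction in terms of $d(\bs)=r$ while the paper argues directly, and that you apply Borel--Mostow at the group level $Z_\dG(\bS)$ whereas the paper works with $Z_\dg(\ss)$.
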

\begin{proof}
  (1) We have explained  above that $Z_{\GG}(\TT_d) \simeq {_\eta}\dH_R.$ This last group is loop reductive by definition since $\eta$ is a loop cocycle.
 
 \noindent (2) It follows from Theorem~\ref{conj-loop} that 
all maximal split tori in $\GG$ whose centralizers are loop reductive groups and 
corresponding 
MAD subalgebras are conjugate, hence  have the same dimension, say $r$, 
equal to the rank of $\GG$ over $F_n$. 
Since $\mm$ is an AD subalgebra whose centralizer is a multiloop algebra
Theorem~\ref{conj-loop} applied to $Z_{\GG}(\TT_d)$ shows that
$\dim_k \mm=\dim_k(\dh_{0,...,0})
 \leq r$ and hence $\mm$ is a MAD subalgebra if and only if    $\dim_k(\dh_{0,...,0})= r$.
It is then enough to show that there exists a Borel-Mostow AD subalgebra
of rank $r$, that is we need to find  
 a Cartan subalgebra $\dh'$ of $\dg$ normalized by $\bs$ such that
 $\dim_k(\dh'_{0,...,0} )=r$.  
 
 If $r=0$ there is nothing to prove. Assume that $r>0$. Denote by $I$ the type
 of minimal parabolic subgroups of $\GG$ over $F_n$. Fix a Cartan subalgebra
 $\dh_0\subset \dg$, the corresponding maximal torus $\bT_0\subset \bG$ and
 a basis of the root system $\Sigma(\bT_0,\bG)$. 
 In the course of the proof of Theorem~\ref{conj-loop} we showed that  
up to conjugacy by an element of $\bG(k)$, we can assume that 
$\bs$ normalizes 
the standard parabolic group
$\bP_I$ 
and also the standard Levi subgroup $\bL_I$. 

Let $\bS\subset \bT_0$ be the torus consisting of the fixed point subgroup of the radical of $\bL_I$
under $\bs$. Then $\bS_R\hookrightarrow{_{\eta}}(\bL_R)\subset { _{\eta}\bG_R}$ is the 
maximal split torus in the radical of $_{\eta}(\bL_I)_R$.
Since the twist $_{\eta}(\bP_I)_R\otimes_R F_n$ is
a minimal parabolic subgroup of $\GG$ over  $F_n$
and $_{\eta}(\bL)_I\otimes_R F_n$ is its Levi subgroup it follows 
that $\bS_R\otimes_R F_n$ is a maximal split torus of $\GG$ over $F_n$;
in particular  $\dim_k(\bS)=r$.


Let  $\ss\subset \dh_0$ be the Lie algebra of $\bS$. We have
$\dim_k(\ss)=\dim_k(\bS)=r$ and by our construction $\bs$ acts trivially
on $\ss$.
%
 The reductive subalgebra $Z_\dg(\ss)$  is stable under
$\bs$, so the application of Borel-Mostow's theorem provides a 
Cartan subalgebra $\dh'$ of $Z_\dg(\ss)$ stable under $\bs$.
Its fixed subalgebra has dimension $\leq r$ and contains $\ss$,
hence it coincides with $\ss$.
 \end{proof}

According to our  Conjugacy Theorem all Borel-Mostow MAD subalgebras of a multiloop 
algebra  are conjugate under  $\GG(R_n).$ There is a very important class of 
multiloop algebras, the so-called Lie tori, where Borel-Mostow MAD subalgebras 
play a crucial role. 
We now turn our attention to them.\footnote{Lie tori were introduced by Y.~Yoshii   \cite{Y2,Y3} and further studied
by E.~Neher in \cite{ N1, N2}. The terminology is consistent with that of tori in the theory of non-associative algebras, e.g. Jordan tori. But in the presence of algebraic groups, where tori are well defined objects, the terminology is a bit unfortunate.}

\begin{theorem}\label{invariant}
Let $\mathcal{L}$ be a centreless Lie torus which is finitely generated over its centroid. The (relative) type $\Delta$ is an invariant of $\mathcal{L}$. 
\end{theorem}

\begin{proof} After sorting through the several relevant definitions, the Theorem  follows from our conjugacy of Borel-Mostow MAD subalgebras in view of the realization of the Lie tori in question as multiloop algebras as established in \cite{ABFP}.
\end{proof}

The spirit of this result should be interpreted as the analogue that on $\dg$ we cannot choose two different Cartan subalgebras that will lead to root systems of different type.  More generally, it is the analogue of the fact that the relative type of a 
finite-dimensional simple Lie algebra (in characteristic 0) or of a simple algebraic group is an invariant of the algebra or group in question. 

The relevance of centreless Lie tori is that they sit at the ``bottom" of every Extended Affine Lie Algebra (see \cite{AABGP}, \cite{N1} and \cite{N2}). A good example is provided by the affine Kac-Moody Lie algebras. They are of the form (see \cite{Kac})
$$ \mathcal {E} = \cL \oplus kc \oplus kd$$
where $\cL$ is a loop algebra of the form $ L(\dg, \pi)$ for some (unique) $\dg$ and some (unique up to conjugacy) diagram automorphism $\pi$ of $\dg.$ The element $c$ is central and $d $ is a degree derivation for a natural grading of $\cL.$ If $\dh$ is the standard Chevalley split Cartan subalgebra of $\dg$, then $ \mathcal{H} = \dh^\pi + kc + kd$ plays the role of the Cartan subalgebra for $\mathcal{E}.$ 

 \begin{remark}The invariance of the relative type was established in \cite{Als} by using strictly methods from EALA theory. Allison also showed that under the assumption that conjugacy (as established in this paper) holds, any   isotopy between Lie tori necessarily preserves the external root data information. This is a very important result for the theory of EALAs for, together with conjugacy, it yields a very precise description of the group of automorphisms of Lie tori.
 \end{remark}

\section{Acyclicity, II}\label{Acyclicity2}

\begin{theorem}\label{secondmaintheorem}
Let $\HH$ be a loop reductive group scheme over $R_n$. Then
the natural map
$$
H^1_{toral}(R_n, \HH) \to  H^1(F_n,\HH).
$$
is bijective.
\end{theorem}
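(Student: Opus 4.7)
The plan is to bootstrap Theorem \ref{acyc1}, which gives injectivity on the irreducible toral part, to the full toral bijection using the Conjugacy Theorem \ref{conj-loop}. Surjectivity will follow from the bijection $H^1_{loop}(R_n, \bAut(\dH)) \simlgr H^1(F_n, \bAut(\dH))$ of \cite[Thm 8.1]{GP3}, which is the tool that already drives the proof of Theorem \ref{acyc1}.

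For injectivity, by the torsion bijection it suffices to show that if $\xi \in H^1_{toral}(R_n, \HH)$ maps to $1 \in H^1(F_n, \HH)$, then $\xi = 1$. Since $\xi$ is toral, ${_\xi}\HH$ is loop reductive, and Theorem \ref{conj-loop} applied to both $\HH$ and ${_\xi}\HH$ yields minimal parabolic subgroups $\PP \subset \HH$ and $\PP' \subset {_\xi}\HH$. Over $F_n$ the groups $\HH_{F_n}$ and $({_\xi}\HH)_{F_n}$ are isomorphic (as $\xi_{F_n}=1$), so both $\PP_{F_n}$ and $\PP'_{F_n}$ are $F_n$-minimal parabolics of the same group and are $\HH(F_n)$-conjugate by Borel--Tits; in particular they have the same type $\mathbf t$. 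The existence of such an $R_n$-parabolic in ${_\xi}\HH$ translates, via the exact sequence
\[
\HH(R_n) \to (\HH/\PP)(R_n) \to H^1_{\et}(R_n, \PP) \to H^1_{\et}(R_n, \HH),
\]
into the statement that $\xi$ lifts to $\tilde\xi \in H^1_{\et}(R_n, \LL) \simeq H^1_{\et}(R_n, \PP)$, where $\LL$ is the Levi of $\PP$. Let $\SS$ be the maximal split central subtorus of $\LL$; by the Corollary to Theorem \ref{conj-loop}, $(\LL/\SS)_{F_n}$ is anisotropic, hence $\LL/\SS$ is irreducible. Since $\SS$ is split and $\Pic(R_n)=1$ we have $H^1_{\et}(R_n, \SS) = 1$, and $H^1(F_n, \SS) = 1$ by Hilbert 90, so the sequence $1 \to \SS \to \LL \to \LL/\SS \to 1$ gives injections $H^1_{\et}(R_n, \LL) \hookrightarrow H^1_{\et}(R_n, \LL/\SS)$ and likewise over $F_n$. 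Borel--Tits conjugacy of minimal $F_n$-parabolics makes $H^1(F_n, \PP) \hookrightarrow H^1(F_n, \HH)$, forcing $\tilde\xi_{F_n}=1$, so the image of $\tilde\xi$ in $H^1_{toral}(R_n, \LL/\SS)_{irr}$ dies over $F_n$. Theorem \ref{acyc1} applied to $\LL/\SS$ now kills this image, hence $\tilde\xi = 1$ and $\xi = 1$.

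For surjectivity, view $\phi \in H^1(F_n, \HH) \subset H^1(F_n, \bAut(\HH)) = H^1(F_n, \bAut(\dH))$, where $\HH = {_\eta}\dH_{R_n}$ and the last equality uses torsion by $\eta$. The bijection of \cite[Thm 8.1]{GP3} lifts the image of $\phi$ to a loop class in $H^1_{loop}(R_n, \bAut(\dH))$. Since the image of $\phi$ in $H^1(F_n, \bOut(\dH))$ is trivial (as $\phi$ is inner over $F_n$) and the analogous map on $\bOut$ is injective (both being controlled by the finite \'etale group $\bOut(\dH)$ together with the natural map between the relevant Galois groups), the lift is itself inner. Untwisting by $\eta$ and chasing through the diagram produces a loop class $\xi \in H^1_{loop}(R_n, \HH) \subset H^1_{toral}(R_n, \HH)$ mapping to $\phi$.

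The main obstacle: I expect the main technical subtlety to lie in the parabolic reduction step of the injectivity argument, specifically in justifying that the type match between $\PP_{F_n}$ and $\PP'_{F_n}$ forces $\xi$ to lift to $H^1_{\et}(R_n, \LL)$; this requires identifying $(\HH/\PP)(R_n)$ with the $R_n$-scheme of parabolics of $\HH$ of type $\mathbf t$ and using $\PP'$ as a section of the associated bundle over ${_\xi}\HH$. In the surjectivity step, verifying that the lift remains inner (rather than acquiring outer automorphism content) is the other place where care is needed.
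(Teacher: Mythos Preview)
Your injectivity argument is correct and takes a genuinely different route from the paper. The paper first proves a generalized density theorem (Theorem~\ref{acyc2}), extending Theorem~\ref{density} to arbitrary loop reductive groups by means of Theorem~\ref{conj-loop}, and then reruns the two-step dévissage of Theorem~\ref{acyc1} (adjoint case via the $\bAut$--$\bOut$ sequence, then general case via the centre) with Theorem~\ref{acyc2} replacing Theorem~\ref{density}. You instead use Theorem~\ref{conj-loop} directly: the minimal parabolics $\PP\subset\HH$ and $\PP'\subset{_\xi\HH}$ have the same type over $F_n$, so $\PP'$ gives a section of ${_\xi}(\HH/\PP)$ and $\xi$ lifts to $\tilde\xi\in H^1_{\et}(R_n,\LL)$; the twisted Levi ${_{\tilde\xi}}\LL$ is isomorphic to the Levi of $\PP'$, hence loop reductive and irreducible, so Theorem~\ref{acyc1} applies directly to $\LL$ (your detour through $\LL/\SS$ is unnecessary since $\LL$ is already irreducible by minimality of $\PP$). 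This avoids Theorem~\ref{acyc2} entirely, which is a real simplification; the paper's route has the advantage that the density decomposition of $\HH(F_n)$ is of independent interest.

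Your surjectivity argument has a genuine gap. The assertion ``$\phi\in H^1(F_n,\HH)\subset H^1(F_n,\bAut(\HH))$'' is wrong: the map $H^1(F_n,\HH)\to H^1(F_n,\bAut(\HH))$ is not injective when $\HH$ has nontrivial centre, and the phrase ``loop class $\xi\in H^1_{loop}(R_n,\HH)$'' is not well-defined for an $R_n$-group that is not a base change from $k$. What actually works is a two-step reduction. First the adjoint case: push $\phi_{ad}\in H^1(F_n,\HH_{ad})$ to $H^1(F_n,\bAut(\HH))$, lift via the bijection to $H^1_{toral}(R_n,\bAut(\HH))$, and then observe that $H^1_{\et}(R_n,\bOut(\HH))\to H^1(F_n,\bOut(\HH))$ is bijective (torsion by $\eta$ reduces this to the constant finite group $\bOut(\dH)$, where it follows from \cite[Thm~8.1]{GP3}), so the lift descends to $H^1_{toral}(R_n,\HH_{ad})$. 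Then the general case needs the centre exact sequence $1\to\CC\to\HH\to\HH_{ad}\to 1$ together with the isomorphisms $H^i_{\et}(R_n,\CC)\simeq H^i(F_n,\CC)$ for $i=1,2$ from \cite[Prop.~3.4]{GP2}: the $H^2$ comparison kills the lifting obstruction over $R_n$, and the $H^1$ comparison lets you adjust the lift so that it hits $\phi$ on the nose over $F_n$. This is exactly the diagram the paper chases; you should add it.
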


\begin{remark} {\rm The theorem generalizes (in characteristic $0$) our main result in
 \cite{CGP}.
Indeed, in that paper we showed that if $n=1$ and $\bG$ is a reductive
group over an arbitrary field $k$ of good characteristic then
$H_{\et}^1(R_1,\bG) \to H^1(F_1,\bG)$ is
bijective and that every $\bG$-torsor is toral. The Theorem also generalizes the 
Acyclicity result of \cite{GP3}, which is used in the present proof and covers the case 
when $\HH$ is ``constant".}
\end{remark}

The proof of the theorem is based on the following statement
which generalizes the 
Density Theorem \ref{density} to the case of arbitrary loop reductive
 group schemes, not necessary
anisotropic.

\begin{theorem} \label{acyc2} 
Let $\bH$ be a linear algebraic $k$--group
 whose connect component of the identity is reductive. Let $\eta: \pi_1(R_n) \to \bH(\ol k)$ be 
 a loop cocycle and consider the loop reductive $R_n$--groups   $\HH = {_\eta\bH}_{R_n}$ 
 and $\HH^{\circ}= {_\eta\bH^{\circ}}_{R_n}.$ Let $(\PP, \LL)$ be a couple given by 
 Theorem \ref{conj-loop} for $\HH^{\circ}$.
Then there exists a normal subgroup $J$ of
$\LL(F_n)$ which is a quotient of  a group admitting  a composition serie
whose quotients  are  pro-solvable groups in $k$--vector spaces such that
$$
\HH(F_n)= \Bigl\langle \,  \HH(R_n), \,  J , \, \HH(F_n)^+  \Bigr\rangle
$$
where $\HH(F_n)^+$ stands for the normal subgroup
of $\HH(F_n)$ generated by one parameter additive $F_n$--subgroups.
\end{theorem}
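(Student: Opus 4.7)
The overall idea is to combine a Bruhat-style decomposition over $F_n$ relative to the parabolic $\PP$, the Density Theorem~\ref{density} applied to an anisotropic Levi quotient, and a careful analysis of the radical torus $\rad(\LL)$. Since the required decomposition is a generation statement rather than a factorisation, there is some flexibility in which elements land in which piece.

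First, I would exploit the couple $(\PP,\LL)$. Because $R_u(\PP)$ is a successive extension of additive $R_n$--group schemes, the group $R_u(\PP)(F_n)$ lies inside $\HH(F_n)^+$. Choosing an opposite parabolic $\PP^-$ over $R_n$ (which exists as $\LL$ is a common Levi), we also have $R_u(\PP^-)(F_n)\subset \HH(F_n)^+$. Elements of the relative Weyl group over $F_n$ are products of reflections coming from restricted-root $\SL_2$--subgroups, so can be represented in $\HH(F_n)^+$ as well. The standard Bruhat decomposition of $\HH^\circ_{F_n}$, whose minimality over $F_n$ follows from the Corollary after Theorem~\ref{conj-loop}, then yields $\HH^\circ(F_n)\subset \langle \LL(F_n),\,\HH(F_n)^+\rangle$. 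For the component group, the twist $\HH/\HH^\circ={_\eta}(\bH/\bH^\circ)$ is trivialised by the same Galois cover as $\eta$, giving $(\HH/\HH^\circ)(R_n)=(\HH/\HH^\circ)(F_n)$, so lifting components (combined with the Bruhat step already established) lets us write any element of $\HH(F_n)$ as a product of one in $\HH(R_n)$ and one in $\HH^\circ(F_n)$. The problem thus reduces to producing the required $J\trianglelefteq\LL(F_n)$ with $\LL(F_n)\subset \LL(R_n)\cdot J\cdot \LL(F_n)^+$.

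Next, I would analyse $\LL(F_n)$ via the central isogeny $\rad(\LL)\times \LL^{\mathrm{der}}\to \LL$. By the Corollary following Theorem~\ref{conj-loop}, $(\LL/\SS)_{F_n}$ is anisotropic, which forces $\LL^{\mathrm{der}}_{F_n}$ to be semisimple anisotropic; moreover $\LL^{\mathrm{der}}$ is loop reductive since $\LL$ is. Theorem~\ref{density} applied to $\LL^{\mathrm{der}}$ then produces a pro-solvable decomposition $\LL^{\mathrm{der}}(F_n)=(J_n'\rtimes\cdots\rtimes J_1')\cdot \LL^{\mathrm{der}}(R_n)$. For $\TT:=\rad(\LL)\supset \SS$ with $\TT/\SS$ anisotropic, the split part contributes $\SS(F_n)=\SS(R_n)\cdot U_\SS$ via the decomposition $F_n^\times=R_n^\times\cdot(1+\mathfrak{m}_{F_n})$, with $U_\SS$ pro-unipotent (hence pro-solvable in $k$--vector spaces); the anisotropic quotient $(\TT/\SS)(F_n)$ is controlled modulo its $R_n$--points by Galois descent along the Galois cover that splits $\TT$, contributing one more pro-solvable layer. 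Assembling these into the normal closure in $\LL(F_n)$ of the product of the pro-solvable factors from $\LL^{\mathrm{der}}$, from $U_\SS$, and from the anisotropic torus piece, produces the desired $J$.

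\textbf{Main obstacle.} The principal technical difficulty is arranging $J$ as a single \emph{normal} subgroup of $\LL(F_n)$ enjoying the prescribed composition-series structure, while simultaneously encoding (i) the inductive pro-solvable tower from Theorem~\ref{density} applied to $\LL^{\mathrm{der}}$, (ii) the pro-unipotent split-torus piece $U_\SS$, and (iii) the anisotropic contribution from $\TT/\SS$, whose interactions do not obviously land in a single normal subgroup. A secondary subtlety is the reduction from $\HH$ to $\HH^\circ$: while the component-group computation $(\HH/\HH^\circ)(R_n)=(\HH/\HH^\circ)(F_n)$ is essentially formal, lifting each component of $\HH(F_n)$ to $\HH(R_n)$ requires controlling the $\HH^\circ$--torsors arising as obstructions, and one must arrange this \emph{without} invoking the very injectivity statement (Theorem~\ref{secondmaintheorem}) that the present density theorem is designed to prove.
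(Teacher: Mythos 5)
Your outline reproduces the broad shape of the paper's argument (Bruhat decomposition over $F_n$ to reduce $\HH^\circ(F_n)$ to $\LL(F_n)$ and $\HH(F_n)^+$; the anisotropic Density Theorem~\ref{density} applied to the derived group of $\LL$; a split/anisotropic analysis for the radical torus; a separate step for non-identity components). But two of your steps have real gaps that the paper handles by specific mechanisms you do not supply.

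First, in your treatment of $\LL(F_n)$ via the central isogeny $1\to\bmu\to\mathcal D\LL\times \rad(\LL)\to\LL\to 1$, you propose to ``assemble'' the pro-solvable pieces from $\mathcal D\LL(F_n)$ and from the torus factor into a normal closure $J$ and conclude. This ignores the cohomological obstruction coming from $H^1(\cdot,\bmu)$: to compare $\LL(F_n)$ with the image of $(\mathcal D\LL\times\rad(\LL))(F_n)$ over both $R_n$ and $F_n$, the paper runs a diagram chase in which the crucial input is the \emph{injectivity} statement of Theorem~\ref{acyc1} (that $H^1_{toral}(R_n,\mathcal D\LL)_{irr}\to H^1(F_n,\mathcal D\LL)$ has trivial kernel), applied after observing that the image of $H^1_{\et}(R_n,\bmu)\to H^1_{\et}(R_n,\mathcal D\LL)$ lands in the toral-irreducible subset. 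Nothing in your sketch controls the class in $H^1(\cdot,\bmu)$ of a given element of $\LL(F_n)$, so the ``assembly'' step is unjustified as stated.

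Second, you yourself flag the reduction to the identity component as a secondary subtlety, and indeed the equality $(\HH/\HH^\circ)(R_n)=(\HH/\HH^\circ)(F_n)$ by itself does not let you lift an arbitrary element of $\HH(F_n)$ to $\HH(R_n)\cdot\HH^\circ(F_n)$: you would need to kill an $\HH^\circ$--torsor obstruction, which is exactly what one is not allowed to assume here. The paper's solution is geometric rather than cohomological: given $g\in\HH(F_n)$, conjugation of $\SS_{F_n}$ by $g$ produces another maximal split torus, so after multiplying $g$ by an element of $\HH^\circ(F_n)$ one may assume $g$ normalizes $\SS_{F_n}$, hence $g\in N_\HH(\LL)(F_n)$. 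One then passes to $\HH'=N_\HH(\LL)/\SS$, whose identity component $\LL/\SS$ is anisotropic by Proposition~\ref{flag}, applies the anisotropic Density Theorem~\ref{density} to $\HH'$, and lifts back using the surjectivity of $N_\HH(\LL)(R_n)\to\HH'(R_n)$ and $N_\HH(\LL)(F_n)\to\HH'(F_n)$ coming from $H^1_{\et}(R_n,\SS)=H^1(F_n,\SS)=1$. This normalizer-of-$\SS$ device is the missing idea in your proposal; without it the component-group step does not close.
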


\begin{remark}\label{rem-acyc2}
 {\rm  If $\HH$ is semisimple simply connected, isotropic and $F_n$-simple
we know that $\HH(F_n)/\HH(F_n)^+ \cong \HH(F_n)/R$ \cite[7.2]{G}, where $R$ is
an $R$-equivalence,
  so that
 the group $\HH(F_n)/\HH(F_n)^+$ has finite exponent ({\it ibid}, 7.6).
In this case, the decomposition reads
$\HH(F_n)= \Bigl\langle \,  \HH(R_n),  \, \HH(F_n)^+  \Bigr\rangle$. }
\end{remark}




\begin{proof}
{\it Case $(1)$:}  $\HH$ {\it is a torus} $\TT.$
We leave it  to the
reader to reason by induction
on $n$ to establish the case of a split torus  $\bT=\bG_m^n$
(the case $n=1$ follows from the
identity   $F_ 1^\times= R_1^\times   \cdot 
\ker( k[[t_1]]^\times \to k^\times)$.
Since all finite connected \'etale coverings of $R_n$ are also Laurent polynomial rings over
field extensions of $k$ \cite[lemma 2.8]{GP3}
and the statement is stable 
under products, the theorem also holds
for induced tori.

Let $\TT$ be an arbitrary torus. Since $\TT$  is isotrivial, it is a quotient
of an induced torus $\EE$. We have then an exact sequence
$$1 \to \SS \buildrel i \over \longrightarrow  \EE \buildrel f
\over \longrightarrow  \TT \to 1$$
of multiplicative $R_n$--group schemes.
It gives rise to a commutative diagram
$$
\minCDarrowwidth25pt
\begin{CD}
1 @>>> \SS(R_n) @>{i_{R_n}}>> \EE(R_n)
@>{f_{R_n}}>> \TT(R_n)  @>{\varphi_{R_n}}>> H_{\et}^1(R_n,\SS) @>>> 1  \\
&& @VVV @VVV @VVV @V{\simeq}VV \\
1 @>>> \SS(F_n) @>{i_{F_n}}>> \EE(F_n)
 @>{f_{F_n}}>> \TT(F_n)  @>{\varphi_{F_n}}>> H^1(F_n,\SS) @>>> 1 \\
\end{CD}
$$
with exact rows. Note that the right vertical map is an isomorphism by
\cite[prop. 3.4]{GP2} and that surjectivity on the right horizontal maps is due to the fact
$H_{\et}^1(R_n,\EE)=H^1(F_n,\EE)=1$.
By diagram chasing we see that
$$
\TT(R_n)/ f_{F_n}\bigl( \EE(R_n)\bigr) \simlgr
 \TT(F_n)/ f_{R_n}\bigl( \EE(F_n)  \bigr) .
$$
Therefore the case of the induced torus $\EE$  provides a suitable
group $J$ such that $\TT(F_n)=\TT(R_n)\cdot f_{F_n}(J)$.

{\it Case $(2)$:} $\HH=\LL$ {\it  is irreducible.}
Let $\CC$ be the radical  torus  of $\LL$.
We have an  exact sequence \cite[XXI, 6.2.4]{SGA3}
$$
\begin{CD}
 1 @>>> \bmu @>{i}>> \mathcal{D}\LL \times_{R_n} \CC  @>{f}>> \LL @>>> 1 .\\
\end{CD}
$$
Here $f$ is a natural multiplication map and $\bmu$ is its kernel.
It gives rise to a commutative diagram of  exact sequences of pointed sets
$$
\minCDarrowwidth20pt
\begin{CD}
 (\mathcal{D}\LL \times \CC)(R_n)
 @>{f_{R_n}}>> \LL(R_n)  @>{\varphi_{R_n}}>> H_{\et}^1(R_n,\bmu) @>{i_{*,R_n}}>>
 H^1_{loop}(R_n,\mathcal{D}\LL\times \CC) \\
 @VVV @VVV @V{\simeq}VV @VVV \\
  (\mathcal{D}\LL \times \CC)(F_n)
 @>{f_{F_n}}>> \LL(F_n)  @>{\varphi_{F_n}}>> H^1(F_n,\bmu) @>{i_{*,F_n}}>>
 H^1(F_n,\mathcal{D}\LL \times \CC) . \\
\end{CD}
$$
Note  that the image of the map  $H_{\et}^1(R_n,\bmu) \to H_{\et}^1(R_n,\mathcal{D} \LL)$ is
contained in
$H^1_{toral}(R_n,\mathcal{D}\LL)$. 
So taking into consideration  Theorem \ref{acyc1} (applied to the irreducible
loop reductive group scheme $\mathcal{D}\LL$ and chasing the above diagram we see that
$$
\LL(R_n)/ f_{R_n}\bigl( (\mathcal{D}\LL)(R_n) \times \CC(R_n) \bigr) \simlgr
 \LL(F_n)/ f_{F_n}\bigl( (\mathcal{D}\LL)(F_n) \times \CC(F_n) \bigr).
$$
The case of $\mathcal{D}\LL$ done in Proposition \ref{density}
together with the case of the torus $\CC$ provide a suitable normal
group $J$ such that $\LL(F_n)= \LL(R_n)\cdot J$.

{\it Case $(3)$}. $\HH=\HH^{\circ}.$ Since $\HH$ is loop reductive by assumption
it suffices to  observe  that
$\HH(F_n)$ is
generated by $\LL(F_n)$ and  $\HH^+(F_n)$ \cite[6.11]{BT73}.

{\it Case $(4)$}. For the general case it remains to show
that for an arbitrary element $g\in {\HH}(F_n)$ the coset
$g{\HH}^{\circ}(F_n)$ contains at least one $R_n$--point of $\HH$.

Let $\SS$ be the maximal split torus of the radical of $\LL$.
The torus $g\SS_{F_n}g^{-1}\subset {\HH}^{\circ}_{F_n}$ is maximal split,
hence 
$g\SS_{F_n}g^{-1}=
g_1\SS_{F_n}g^{-1}_1$ for some $g_1\in \HH^{\circ}(F_n)$. Thus replacing 
$g$ by $g_1^{-1}g$ if necessary, we may
assume that $g{\SS}_{F_n}g^{-1}={\SS}_{F_n}$. Then we also have
$g(\LL_{F_n})g^{-1}=\LL_{F_n},$ so that  $g\in N_{\HH}(\LL)(F_n)$.

The torus $\SS$ is clearly normal in $N_{\HH}(\LL)$.
Hence we have an exact sequence
$$
1\longrightarrow \SS \longrightarrow N_{\HH}(\LL)
\longrightarrow \HH' :=N_{\HH}(\LL)/\SS\longrightarrow 1.
$$
Note that since $H_{\et}^1(R_n,\SS)=1$, the natural maps $N_{\HH}(\LL)(R_n)\to
\HH'(R_n)$ and $N_{\HH}(\LL)(F_n)\to
\HH'(F_n)$ are surjective.
Furthermore, $\HH'$ satisfies all conditions of Theorem~\ref{density},
so that the required fact follows immediately from that theorem applied
to $\HH'$ and from the surjectivity of the above maps.
\end{proof}

We can proceed to the proof of Theorem \ref{secondmaintheorem}.

\begin{proof}
\smallskip

\noindent{\it Injectivity:}
By twisting, it is enough to show that the natural map
$H^1_{toral}(R_n, \HH) \to H^1(F_n,\HH)$ has trivial kernel.

\smallskip \noindent
We first assume that  $\HH$ is adjoint.
We may view  $\HH$  as the twisted form of a Chevalley group scheme
$\dH_{R_n}$
by a loop cocycle $\eta : \pi_1(R_n) \to {\rm Aut}\big(\dH(\ol k)\big)$.
The same reasoning given in Case 1 of the proof of Theorem \ref{acyc1} shows  that we have a natural bijection
\begin{equation}\label{bij}
H^1_{toral}\big(R_n, {\bAut}(\HH)\big) \simlgr H^1\big(F_n, {\bAut}(\HH)\big) .
\end{equation}
The exact sequence
$$
1 \to \HH \to {\bAut}(\HH) \to {\bOut}(\HH) \to 1$$
gives rise to a commutative diagram of exact sequence of pointed sets
$$
\minCDarrowwidth20pt
\begin{CD}
{\bAut}(\HH)(R_n) @>\gamma>>
{\bOut}(\HH)(R_n) @>{\varphi}>> H_{\et}^1(R_n, \HH) @>>>H_{\et}^1(R_n,
{\bAut}(\HH)) \\
@VVV \mid \mid && @VVV @VVV \\
{\bAut}(\HH)(F_n) @>\psi>> {\bOut}(\HH)(F_n) @>>> H^1(F_n, \HH)
@>>>H^1(F_n, {\bAut}(\HH)) .\\
\end{CD}
$$
Let  $v \in H_{\et}^1(R_n, \HH)$ be a toral class mapping to $1 \in H^1(F_n, \HH)$.
In view of bijection~(\ref{bij}) there exists $u \in {\rm Out}(\HH)(R_n)$
such that $v= \varphi(u)$ and $u$ belongs to the image of $\psi$.
 Since
${\rm Out}(\HH)(R_n)$ is a finite group, the Density Theorem~\ref{acyc2}
shows that
${\rm Aut}(\HH)(R_n)$ and ${\rm Aut}(\HH)(F_n)$ have same image
in ${\rm Out}(\HH)(F_n)$.
So  $u$ belongs to the image of $\gamma$,
hence $v=1 \in H_{\et}^1(R_n,\HH)$.

\smallskip

\noindent Let now $\HH$ be an arbitrary reductive group.
Set $\CC= Z(\HH)$. This is an $R_n$--group of multiplicative type and
we have an exact (central) sequence of $R_n$-group schemes
$$
1 \to \CC \buildrel i \over \longrightarrow  \HH \to \HH_{ad} \to 1.
$$
This exact sequence gives rise to the diagram of exact sequences of pointed sets
$$
\begin{CD}
\HH_{ad}(R_n) @>{\varphi_{R_n}}>> H_{\et}^1(R_n, \CC) @>{i_*}>> H_{\et}^1(R_n, \HH)
@>>>H_{\et}^1(R_n, \HH_{ad}) @>{\Delta}>> H_{\et}^2(R_n, \CC) \\
@VVV @VV{\simeq}V  @VVV @VVV  @VV{\simeq}V \\
\HH_{ad}(F_n) @>{\varphi_{F_n}}>> H^1(F_n, \CC) @>>> H^1(F_n, \HH)
@>>>H^1(F_n, \HH_{ad}) @>{\Delta_{F_n}}>> H_{\et}^2(F_n, \CC).\\
\end{CD}
$$
The isomorphisms $H_{\et}^i(R_n, \CC) \cong H^i(F_n, \CC)$ comes from
\cite[prop. 3.4.(3)]{GP2} for $i=1,2$.

Let  $v \in H_{\et}^1(R_n, \HH)$ be a toral class
mapping to $1 \in H^1(F_n, \HH)$.
Taking into account the adjoint case, a diagram chase provides  $u \in
H_{\et}^1(R_n, \CC)$ such that $v= i_*(u)$ and $u_{F_n}$
belongs to the image of
the characteristic map $\varphi_{F_n}$.
 Since
$H_{\et}^1(R_n, \CC)$ is an abelian torsion  group,
the Density Theorem~\ref{acyc2} shows that
$\HH_{ad}(F_n)$ and $\HH_{ad}(R_n)$ have the same images in $H_{\et}^1(R_n, \CC)$.
So  $u$ belongs to the image of $\varphi_{R_n}$. Hence
$v=i_*(u) =1 \in H_{\et}^1(R_n,\HH)$.

\noindent{\it Surjectivity:} Follows by a simple chasing in the diagrams
above.
\end{proof}

\noindent{\it Question.} Assume that $\HH$ is loop semisimple
simply connected, isotropic and $F_n$-simple.
Let $\HH(R_n)^+ \subset \HH(R_n)$ be the (normal) subgroup generated by
the $R_u(\PP)(R_n)$ where $\PP$ runs over the set of parabolic subgroups
of $\HH$ considered in Theorem \ref{conj-loop}.
Is the map
$$
 \HH(R_n) / \HH(R_n)^+  \to \HH(F_n) / \HH(F_n)^+
$$
an isomorphism?

Note that the map  is surjective by Remark \ref{rem-acyc2}. 



\section{Appendix: Greenberg functors, Bruhat-Tits theory and pro-unipotent
radicals}


We are given a complete discrete valuation field $K$ of valuation ring
$O=O_K$ and of perfect residue field $k=O/\pi O$. Here $\pi\in O$
is a uniformizer.
In the inequal characteristic case denote by $e_0$ the absolute ramification index of $O$, i.e.
 $p= u \pi^{e_0}$ for a unit $u \in O$ where $p= {\rm char}(k)$;
in the equal characteristic case, put $e_0=1$.
We denote by $O^{sh}$ the strict  henselization  of $O$, or in other words, its
maximal unramified extension.

\subsection{Greenberg functor}

We recall here basic facts, see the references
\cite{Gb},  \cite[\S III.4]{M2}, \cite{BLR},  \cite{B}.

Assume first that we are in  the  unequal characteristic case,
that is $K$ is of characteristic $0$ and $k$ is of characteristic $p>0$.

For each $k$-algebra $\Lambda$  and $r \geq 0$, we denote by $W_r(\Lambda)$
 the group of Witt vectors of length $r$ and by $W(\Lambda)= \limproj W_r(\Lambda)$
the ring of Witt vectors (see \cite[\S II.6]{Se2}).
There exists a unique ring homomorphism  $W(k) \to O$ commuting with the projection
on $k=W_0(k)$ ({\it ibid}, II.5).

Let $\SS$ be an affine  $W(k)$-scheme.
Recall that for each  $r \geq 0$,
 the functor $\kalg \to Sets$ given by
$$\Lambda \to  \SS(W_r(\Lambda))$$ is
representable by an affine  $k$--scheme
${\rm Green}_r(\SS)$.
The projective limit
$${\rm Green}\,(\SS):\,= \underset{r}{\limproj} {\rm Green}_r(\SS)$$ is a scheme
which satisfies  ${\rm Green}(\SS)(\Lambda)= \SS(W(\Lambda))$.
If $\XX$ is an affine  $O$--scheme, we deal also with the relative versions
of the Greenberg functor
$$
\uG_r(\XX):=  {\rm Green}_r( \prod\limits_{O/W(k)} \XX) , \quad
\uG(X):=  {\rm Green}\,( \prod\limits_{O/W(k)} \XX).
$$
We have $\uG_r(\XX)(k)= \XX(O/p^r O)$ and $\uG(\XX)(k)= \XX(O)$.
We also have $\uG(\Spec(O))= \Spec(k)$; if $\XX$ is a $O$--group scheme, then 
  $\uG(\XX)$ and the  $\uG_r(\XX)$ carry a natural $k$-group structure \cite[4.1]{B}.

\begin{lemma}\label{Greenberg2} Let $L/K$ be a finite extension,
$O_L$ the valuation ring of $L$ and $l/k$ the corresponding
residue extension.
Let $\YY/O_L$ be an affine scheme. Let $\uH/l$ be
the relative Greenberg functor
of $\YY$ with respect to $W(l)$.
Then  we have  natural isomorphisms of $k$-schemes (for all  $r\geq 1$)
$$
\uG_r\Bigl( \prod\limits_{O_L/O} \YY \Bigr) \simeq  \prod\limits_{l/k}
\uH_r(\YY) ,
\quad
\uG\Bigl(\prod\limits_{O_L/O} \YY \Bigr) \simeq  \prod\limits_{l/k} \uH(\YY) .
$$
In particular if $k=l$ then we have $\uG_r\Bigl( \prod\limits_{O_L/O} \YY \Bigr) =
\uH_r(\YY)$ and
$\uG\Bigl(\prod\limits_{O_L/O} \YY \Bigr) \simeq   \uH(\YY)$.
\end{lemma}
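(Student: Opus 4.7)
The plan is to prove both isomorphisms by the Yoneda lemma: I will compute the $\Lambda$-points of each side on an arbitrary $k$-algebra $\Lambda$ and exhibit a functorial bijection. For the truncated statement in the unequal characteristic case, unraveling the definitions gives
$$
\uG_r\Bigl(\prod\nolimits_{O_L/O}\YY\Bigr)(\Lambda) \;=\; \Bigl(\prod\nolimits_{O_L/O}\YY\Bigr)\!\bigl(W_r(\Lambda)\otimes_{W(k)}O\bigr) \;=\; \YY\bigl(W_r(\Lambda)\otimes_{W(k)}O_L\bigr),
$$
and, using that Weil restriction along $l/k$ corresponds to base change $\Lambda\rightsquigarrow \Lambda\otimes_k l$,
$$
\Bigl(\prod\nolimits_{l/k}\uH_r(\YY)\Bigr)(\Lambda) \;=\; \uH_r(\YY)(\Lambda\otimes_k l) \;=\; \YY\bigl(W_r(\Lambda\otimes_k l)\otimes_{W(l)}O_L\bigr).
$$
Thus the entire statement for $\uG_r$ and $\uH_r$ reduces to a natural ring isomorphism
$$
W_r(\Lambda)\otimes_{W(k)}O_L \;\simeq\; W_r(\Lambda\otimes_k l)\otimes_{W(l)}O_L.
$$

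To produce this isomorphism I will invoke the compatibility of Witt vectors with \'etale base change. Since $k$ is perfect, $l/k$ is finite \'etale and $W(l)/W(k)$ is the corresponding finite unramified (hence \'etale) extension; in particular the canonical map
$$
W_r(\Lambda)\otimes_{W_r(k)}W_r(l)\;\longrightarrow\;W_r(\Lambda\otimes_k l)
$$
is an isomorphism. Because $W_r(\Lambda)$ is killed by $p^r$, tensoring over $W_r(k)=W(k)/p^r$ coincides with tensoring over $W(k)$, so the right hand side above becomes
$$
W_r(\Lambda)\otimes_{W(k)}W(l)\otimes_{W(l)}O_L \;\simeq\; W_r(\Lambda)\otimes_{W(k)}O_L,
$$
which is exactly the left hand side. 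The ring identity is manifestly functorial in $\Lambda$, so Yoneda furnishes the required isomorphism of $k$-schemes $\uG_r(\prod_{O_L/O}\YY)\simeq \prod_{l/k}\uH_r(\YY)$. The non-truncated statement follows by passing to the projective limit over $r$, since both $\uG$ and $\uH$ are defined as such limits and Weil restriction along the finite flat map $l/k$ commutes with limits of affine schemes.

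For the equal characteristic case, $W(k)$ is replaced by $k$ itself (so $O\simeq k[[\pi]]$, $O_L\simeq l[[\pi_L]]$) and the corresponding truncation $W_r(\Lambda)$ is replaced by $\Lambda$; the analogue of the key ring identity collapses to the trivial transitivity
$\Lambda\otimes_k O_L \simeq (\Lambda\otimes_k l)\otimes_l O_L$. The particular case $k=l$ asserted at the end is then automatic: the Weil restriction $\prod_{l/k}$ is the identity functor, so the general isomorphism reads $\uG_r(\prod_{O_L/O}\YY)\simeq \uH_r(\YY)$ and similarly after passage to the limit.

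The only genuine subtlety is the \'etale base change property of Witt vectors; everything else is formal Yoneda and transitivity of tensor products. I would either cite this property from the literature (e.g.\ Borger's work, or via the explicit presentation $W(l)=W(k)[X]/(F(X))$ obtained by Hensel-lifting a separable defining polynomial of $l/k$) or, if a self-contained argument is desired, verify it by writing $l=k[x]/(f)$ with $f$ separable, lifting $f$ to $F\in W_r(k)[X]$, and checking directly that $W_r(\Lambda\otimes_k l)=W_r(\Lambda)[X]/(F)$ using that the ghost components of Witt vectors behave well under adjoining a root of an \'etale polynomial.
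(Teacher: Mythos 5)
Your proof is correct and takes essentially the same approach as the paper's: compute $\Lambda$-points of both sides, reduce everything to the Witt-vector compatibility $W(l)\otimes_{W(k)}W_r(\Lambda)\simeq W_r(l)\otimes_{W_r(k)}W_r(\Lambda)\simeq W_r(\Lambda\otimes_k l)$, and pass to the limit over $r$. The paper organizes the reduction via the transitivity decomposition $\prod_{O_L/W(k)}=\prod_{W(l)/W(k)}\prod_{O_L/W(l)}$ of Weil restrictions and cites Illusie for the key Witt-vector base-change identity, but the substance is identical to your direct ring computation.
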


\begin{proof}
We have a commutative square
$$
\begin{CD}
 O @>>> O_L \\
@AAA @AAA \\
W(k) @>>> W(l). \\
\end{CD}
$$
So by the functorial properties of the Weil restriction, we have
\begin{equation}\label{weil}
\prod\limits_{O/W(k)} \prod\limits_{O_L/O} \YY
=\prod\limits_{O_L/W(k)}  \YY
= \prod\limits_{W(l)/W(k)} \prod\limits_{O_L/W(l)} \YY.
\end{equation}
Let $\Lambda$ be a $k$--algebra. Using (\ref{weil}) and
the definitions of the Greenberg functors, we have
\begin{eqnarray} \nonumber
 \uG_r\Bigl( \prod\limits_{O_L/O} \YY \Bigr)(\Lambda)
&=& {\rm Green}_r\Bigl( \prod\limits_{O_L/W(k)} \YY \Bigr)(\Lambda) \\ \nonumber
&=& \Bigl( \prod\limits_{W(l)/W(k)} \prod
\limits_{O_L/W(l)} \YY \Bigr)(W_r(\Lambda) )\\ \nonumber
&=& \Bigl( \prod\limits_{O_L/W(l)} \YY \Bigr)\bigl( W(l) \otimes_{W(k)}
W_r(\Lambda) \bigr). \\ \nonumber
\end{eqnarray}

\noindent Since $W_r(\Lambda)$ is a $W_r(k)$-module, we have
$$W(l) \otimes_{W(k)}   W_r(\Lambda)
= \break W_r(l) \otimes_{W_r(k)}   W_r(\Lambda) = W_r( \Lambda \otimes_k l)$$
by \cite[1.5.7]{I}.
Hence
$$
\uG_r\Bigl( \prod\limits_{O_L/O} \YY \Bigr)(\Lambda)=
 \Bigl( \prod\limits_{O_L/W(l)} \YY \Bigr)(  W_r(\Lambda \otimes_k l))
= R_{l/k}\bigl( \uH_r\bigr) (  \Lambda)
$$
as desired. By passing to the limit, we get the  second identity.
\end{proof}

\begin{lemma}\label{Greenberg1}
{\rm (1)} Let $\XX/O$ be an affine  scheme of finite type
such that   $\XX_K=\emptyset$. Then $\uG(\XX)=\emptyset$.

\smallskip

{\rm (2)} Let $\NN/O$ be an affine  group scheme of finite type
such that   $\NN_K=\Spec\,(K)$. Then $\uG(\NN)=\uG(\Spec\,(O))= \Spec(k)$.
\end{lemma}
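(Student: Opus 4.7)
My plan is to prove (1) by a direct calculation with Witt vectors and then reduce (2) to (1) by analyzing the open complement of the identity section.

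For (1), write $\XX = \Spec(A)$ where $A$ is a finitely generated $O$-algebra. The hypothesis $\XX_K = \emptyset$ gives $A \otimes_O K = 0$; since $A$ is finitely generated as an $O$-algebra, the image of $1 \in A$ must be $\pi$-power torsion, so there exists $N \geq 1$ with $\pi^N = 0$ in $A$. For any $k$-algebra $\Lambda$ and any $r \geq 1$, the ring $S_r(\Lambda) := W_r(\Lambda) \otimes_{W(k)} O$ is a free $W_r(\Lambda)$-module of rank $e_0$ (since $O$ is free of rank $e_0$ over $W(k)$), and the kernel of the composite $O \to W(k) \otimes_{W(k)} O \to S_r(\Lambda)$ is $p^r O = \pi^{e_0 r} O$ up to units. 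Consequently $\pi^j$ is nonzero in $S_r(\Lambda)$ whenever $j < e_0 r$. Fixing any $r$ with $e_0 r > N$, an $O$-algebra homomorphism $A \to S_r(\Lambda)$ would have to send the vanishing element $\pi^N \in A$ to the nonzero element $\pi^N \in S_r(\Lambda)$, a contradiction. Hence $\uG_r(\XX)(\Lambda) = \emptyset$ for every nonzero $k$-algebra $\Lambda$. In particular $\uG_r(\XX)(\bar k) = \emptyset$, and since $\uG_r(\XX)$ is of finite type over $k$, this forces $\uG_r(\XX) = \emptyset$ by the Nullstellensatz. Passing to the inverse limit yields $\uG(\XX) = \limproj_r \uG_r(\XX) = \emptyset$.

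For (2), the unit section $e \colon \Spec(O) \hookrightarrow \NN$ is a closed immersion since affine group schemes are separated. Let $\NN^*$ denote the open complement of $e(\Spec(O))$ in $\NN$. Then $(\NN^*)_K = \NN_K \setminus \{e_K\} = \Spec(K) \setminus \{e_K\} = \emptyset$. Cover $\NN^*$ by affine open subschemes $U_i \subseteq \NN$; each satisfies $(U_i)_K = \emptyset$, so by part (1) we have $\uG(U_i) = \emptyset$. Since the Greenberg functor is compatible with open immersions and Zariski gluing, the open subscheme $\uG(\NN^*) \subseteq \uG(\NN)$ is empty. The closed complement of this empty open is all of $\uG(\NN)$, and it is cut out by the closed immersion $\uG(e) \colon \Spec(k) = \uG(\Spec(O)) \hookrightarrow \uG(\NN)$. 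Therefore $\uG(\NN) = \uG(\Spec(O)) = \Spec(k)$.

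The main technical content is the computation in (1) identifying $\pi^{e_0 r}$ as a generator of the kernel of $O \to S_r(\Lambda)$; once this is established, the rest of (1) is formal, and (2) follows by the open/closed decomposition argument. The delicate point in (2) is the scheme-theoretic identification $\uG(\NN) \setminus \uG(e) = \uG(\NN \setminus e)$, which is a compatibility property of the Greenberg functor with open immersions; in our setting (with $\NN$ arising naturally from Bruhat--Tits theory, hence smooth or at least flat of finite presentation) this compatibility is standard, and the resulting scheme-theoretic conclusion $\uG(\NN) = \Spec(k)$ holds without further ado.
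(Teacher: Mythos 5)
Your argument for (1) is essentially the paper's argument carried out at a finite level $r$: both come down to the fact that a ring homomorphism would have to send the element $\pi^N\cdot 1$, which vanishes in $A$, to the nonvanishing element $\pi^N\cdot 1$ in the target. The paper works directly with $W(\Lambda)\otimes_{W(k)}O$ and observes it is $p$-torsion-free, whereas you truncate to $S_r(\Lambda)$ and track the valuation; this is a cosmetic difference, and your version is correct.

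Your argument for (2), however, has a genuine gap in the final step. You deduce from $\uG(\NN^*)=\emptyset$ and the closed immersion $\uG(e)\colon\Spec(k)\hookrightarrow\uG(\NN)$ that $\uG(\NN)=\Spec(k)$. But the Greenberg functor, while preserving open and closed immersions separately, does \emph{not} carry the complementary decomposition $\NN = e(\Spec O)\sqcup\NN^*$ to a cover of $\uG(\NN)$. Writing $I\subset B$ for the augmentation ideal, a $\Lambda$-point of $\uG(\NN)$ is an $O$-algebra map $\phi\colon B\to S:=W(\Lambda)\otimes_{W(k)}O$; it factors through $e(\Spec O)$ iff $\phi(I)=0$ and through $\NN^*$ iff $\phi(I)S=S$, and in general neither need hold (compare: the $\bbZ$-point $2\in\bbA^1_{\bbZ}(\bbZ)$ factors through neither the origin nor its open complement). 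Your reasoning therefore establishes only that the open complement of the closed subscheme $\Spec(k)\hookrightarrow\uG(\NN)$ has no points, which leaves open the possibility that $\uG(\NN)$ is a non-reduced thickening of $\Spec(k)$; and in the common situation where $I$ consists of nilpotents, $\NN^*$ is already empty topologically, so the argument is vacuous while the scheme-theoretic claim is still nontrivial. The paper closes the gap via the Hopf-algebra decomposition $B=O\oplus I$: since $\NN_K=\Spec(K)$ and $I$ is finitely generated, some power $\pi^d$ kills $I$; hence for any $\phi\colon B\to S$ one has $\pi^d\phi(I)=0$, and $\pi$-torsion-freeness of $S$ forces $\phi(I)=0$. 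This is the same torsion computation as in (1), applied to the ideal $I$, and it yields the scheme-theoretic identity $\uG(\NN)=\uG(\Spec O)$ directly.
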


\begin{proof}
(1) We have $\XX=\Spec\,(A)$ where
$A$ is an $O/\pi^dO$--algebra of finite type for $d$ large enough
Put $r_0= d \,  e_{0}$.
Then  $p^{r_0} A=0$. For a $k$--algebra $\Lambda$
 we have by definition
$$
\uG(\XX)(\Lambda)= \Hom_{O}(A, W(\Lambda)\otimes_{W(k)} O).
$$
But $W(\Lambda)\otimes_{W(k)} O$ is $p$--torsion free, so
$\uG(\XX)=\emptyset$.

\smallskip

\noindent (2) We have $N=\Spec(B)$ and we have the decomposition
$B= O \oplus I$ where $I$ is the kernel of the co-unit of 
the corresponding Hopf algebra. The $O$-module  $I$ is an ideal of $B$  which is an
$O/\pi^dO$-algebra of finite type.
The same reasoning as above shows that $$
\begin{array}{ccl}
\uG(\NN)(\Lambda) &= &\Hom_{O}(B, W(\Lambda)\otimes_{W(k)} O)\\
& = &
\Hom_{O}(O, W(\Lambda)\otimes_{W(k)} O)\\
&=& \uG(\Spec(O))(\Lambda).
\end{array}
$$
Thus  $\uG(\NN)= \uG(\Spec(O))$ which is nothing but    $\Spec(k)$ as explained above.
\end{proof}

Secondly, assume that $k$ and $K$ have the same
characteristic ($0$ or $p>0$) and we still assume that $k$ is perfect.
Then  $k$ embeds in $O$ (in a 
unique way, \cite[21.5.3]{EGAIV}) and for an $O$-scheme $\XX$ the functors
$$\uG(\XX):= \prod\limits_{O \mid k} \XX \mbox{\ \ and\ \ }
\uG_r(\XX):= \prod\limits_{O /\pi^{r} O \,\,
\mid  \, k} (\XX \times_O O /\pi^{r} O)$$
play the desired role \cite[\S 9.6]{BLR} and allow us to write
$$
\XX(O)= \underset{r}{\limproj}\,\,  \XX(O/\pi^r O) \, = \, \underset{r}{\limproj}\,\, \uG_r(\XX)(k)
$$
where the $\uG_r(\XX)$ are $k$-schemes (by Weil restriction considerations \cite[\S 7.6]{BLR}).
The two lemmas are true as well.

\subsection{Congruence filtration}\label{congruence}
Let $\dG$ be a reductive $K$--group and denote by ${\cal B}={\cal B}(\dG,K)$
its (extended) Bruhat-Tits building.
Let $x$ be a point  of  ${\cal B}$ and denote by $P_x$ the parahoric subgroup
$$
P_x=
\Bigl\{ \, g \in \dG(K) \, \mid \,  g(x)= x \, \Bigr\}.
$$
Denote
by  ${\mathfrak P}_x$ the canonical smooth group scheme
over $O$ defined by Bruhat-Tits
\cite[\S 5.1]{BT2} with generic fiber $\dG$ and such that
${\mathfrak P}_x(O)=P_x$ or,
more precisely,
$$
 {\mathfrak P}_x(O^{sh})=
\Bigl\{ \, g \in \dG(K^{sh}) \, \mid \,  g(x)= x \, \Bigr\}
$$
where $x$ is viewed as an element in ${\cal B}(\dG,K^{sh})$ via
the canonical mapping ${\cal B}(\dG,K)\hookrightarrow {\cal B}(\dG,K^{sh})$.
Since ${\mathfrak P}_x$ is smooth
we  have
$$
{\mathfrak P}_x(O)= \limproj\limits_{n \geq 1} {\mathfrak P}_x(O/\pi^{n}O)
$$
and the transition maps ${\mathfrak P}_x(O/\pi^{n+1}O) \to
{\mathfrak P}_x(O/\pi^{n}O)$ are
surjective with kernel ${\rm Lie}({\mathfrak P}_x) \otimes_O k$
(\cite[III.4.3]{M2}) 

The  application of the relative Greenberg functor to
the smooth affine group scheme ${\mathfrak P}_x$
defines a projective system of affine $k$-groups $\bP_{x,n}$ ($n \geq 1$)
such that
$$
\bP_{x, n}(k)={\mathfrak P}_x( O/\pi^{n e_0} O).
$$
The $\bP_{x,n}$ are smooth according to \cite[Lemme 4.1.1]{B}.
The kernel $\bP_{x,n+1/n}$ of the transition maps
$\bP_{x,n+1} \to \bP_{x,n}$ are
$k$-unipotent abelian groups which are successive extensions of the
vector group of ${\rm Lie}({\mathfrak P}_{x}) \otimes_O k$
({\it ibid.} or \cite[III.4.3]{M2}).

For each $n \geq 1$, we denote by
 $\bR_{n,x}:=R_{u}(\bP_{x,n})$ the unipotent radical
of ${\bP}_{x,n}$; since $k$ is perfect, it is defined over $k$ and split 
\cite[IV.2.3.9]{DG}.
The quotient $\bM_x$  of $\bP_{x,n}$ by $\bR_{x,n}$ is independent of $n$.
It is  nothing but  the quotient of the
 special fiber of ${\mathfrak P}_x$ by its $k$-unipotent radical $R_x$.
The $k$-group $\bM_x^{\circ}$ is reductive according to \cite[4.6.12]{BT2}.

We consider the ``maximal  pro-unipotent normal subgroup''
$$ 
P_x^{*}:= \ker\Bigl( {\mathfrak P}_x(O) \to \bM_x(k)\Bigr)
$$
which is of analytic nature.
Denote by
$$
\bP_x/k:= \underset{n\geq 1}{\limproj} \bP_{x,n}
$$ and by
$\bP_x^*/k= \ker\bigl(\bP_x \to \bM_x)$. 
By construction we have  $P_x^{*}= \bP_x^{*}(k)$.

\begin{lemma} For each $n \geq 1$, there is a short exact sequence
of affine $k$--groups $$
1 \to \ker(\bP_x \to \bP_{x,n})  \to  \bP_x^{*} \to \bR_{x,n} \to 1 .
$$
\end{lemma}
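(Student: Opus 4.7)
The plan is to identify $\bP_x^*$ with the inverse limit $\limproj_m \bR_{x,m}$ of the unipotent radicals, and then read off the exact sequence as the projection onto the $n$--th coordinate. First I would observe that because the projection $\bP_{x,m+1} \to \bP_{x,m}$ is compatible with the common projection to $\bM_x$, it restricts to a morphism $\bR_{x,m+1} \to \bR_{x,m}$, making $(\bR_{x,m})_{m\geq 1}$ an inverse system. Since kernels commute with inverse limits in the category of affine $k$--groups, one has
$$
\bP_x^* \;=\; \ker\!\bigl(\limproj_m \bP_{x,m} \to \bM_x\bigr) \;=\; \limproj_m \bR_{x,m}.
$$

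Next, I would restrict the canonical projection $\bP_x \to \bP_{x,n}$ to $\bP_x^*$ to produce a morphism $\bP_x^* \to \bR_{x,n}$; this takes values in $\bR_{x,n}$ by the previous identification. The kernel of this restriction equals $\bP_x^* \cap \ker(\bP_x \to \bP_{x,n})$, and any element of $\ker(\bP_x \to \bP_{x,n})$ already sits in $\bP_x^*$ because it maps to the identity in $\bP_{x,n}$ and hence to the identity in $\bM_x$. Thus the kernel is exactly $\ker(\bP_x \to \bP_{x,n})$.

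The crux of the proof, and the step I expect to be the main obstacle, is surjectivity of $\bP_x^* \to \bR_{x,n}$. For this I would show that each transition map fits into a short exact sequence of affine $k$--groups
$$
1 \to \bP_{x,m+1/m} \to \bR_{x,m+1} \to \bR_{x,m} \to 1,
$$
with $\bP_{x,m+1/m}$ the smooth unipotent kernel of $\bP_{x,m+1}\to \bP_{x,m}$ recalled in \S\ref{congruence}. That $\bP_{x,m+1/m} \subset \bR_{x,m+1}$ follows because it maps trivially to $\bP_{x,m}$, hence to $\bM_x$; surjectivity on the right then comes from the snake lemma applied to the commutative diagram with exact rows
$$
\begin{CD}
1 @>>> \bR_{x,m+1} @>>> \bP_{x,m+1} @>>> \bM_x @>>> 1 \\
@. @VVV @VVV @| @. \\
1 @>>> \bR_{x,m} @>>> \bP_{x,m} @>>> \bM_x @>>> 1. \\
\end{CD}
$$
Because each transition map $\bR_{x,m+1}\to \bR_{x,m}$ is a torsor under a smooth unipotent $k$--group, it is a smooth surjection, so lifts exist successively. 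This is the Mittag--Leffler--type condition that guarantees the projection $\limproj_m \bR_{x,m} \to \bR_{x,n}$ from the inverse limit is surjective in the category of affine $k$--groups, completing the exact sequence. The only delicate issue is checking that the projection is a schematic (not merely set-theoretic) surjection, which is ensured by the smoothness of the kernels $\bP_{x,m+1/m}$.
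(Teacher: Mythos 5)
Your proof is correct, but it takes a longer route than the paper's one-line argument. The paper applies the snake lemma just once, directly to the diagram
$$
\begin{CD}
 1 @>>> \bP_x^* @>>> \bP_x @>>> \bM_x @>>> 1 \\
@. @VVV @VVV @| @. \\
 1 @>>> \bR_{x,n} @>>> \bP_{x,n} @>>> \bM_x @>>> 1, \\
\end{CD}
$$
where the right vertical map is the identity and the middle map is the (surjective) canonical projection; the exact sequence of the lemma is then read off immediately from the kernel-cokernel exact sequence. Your argument, by contrast, first identifies $\bP_x^* = \limproj_m \bR_{x,m}$, then applies a snake lemma at each finite stage to get surjectivity of the transition maps $\bR_{x,m+1}\to\bR_{x,m}$ with split unipotent kernels, and finally invokes a Mittag--Leffler-type lifting to conclude surjectivity of the projection from the inverse limit. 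Both arguments rest on the same underlying input (surjectivity of transition maps in the Greenberg filtration with smooth kernel), but the paper uses it once, at the level of $\bP_x\to\bP_{x,n}$, while you use it infinitely often and then pass to the limit. Your route has the side benefit of exhibiting $\bP_x^*$ as $\limproj_m \bR_{x,m}$, which is a clean structural description not made explicit in the paper, and of isolating the intermediate short exact sequences $1\to\bP_{x,m+1/m}\to\bR_{x,m+1}\to\bR_{x,m}\to 1$. The one point you flag yourself --- schematic (not merely pointwise) surjectivity of the projection from the pro-algebraic inverse limit --- is indeed the delicate spot; the most economical justification is that for affine $k$-group schemes a surjective homomorphism is faithfully flat, hence the transition maps on Hopf algebras are injective, so $A_n\hookrightarrow\limind_m A_m$ and the projection is faithfully flat. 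But all of this is avoided by the paper's single snake-lemma application, which is the route I would recommend.
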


\begin{proof} Apply the snake lemma to the commutative diagram of $k$--groups
$$
\begin{CD}
 1 @>>> \bP_x^* @>>> \bP_x @>>> \bM_x \to 1 \\
&& @VVV @VVV \mid \mid \qquad \\
 1 @>>> \bR_{x,n} @>>> \bP_{x,n} @>>> \bM_x \to 1.  \\
\end{CD}
$$
\end{proof}

\begin{lemma}\label{prosolv}
The $k$-group  $\bP_x^*$ is the unique maximal split pro-unipotent
closed normal subgroup of
the pro-algebraic affine $k$-group $\bP_x$.
 \end{lemma}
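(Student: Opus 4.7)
The plan has three steps: first verify that $\bP_x^*$ itself is a split pro-unipotent closed normal subgroup, then establish maximality, then deduce uniqueness.

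\textbf{Step 1 (The subgroup $\bP_x^*$ is split pro-unipotent).} By construction $\bP_x^*$ is closed and normal in $\bP_x$, since it is the kernel of the morphism $\bP_x \to \bM_x$. To see it is split pro-unipotent, note that $\bP_x^* = \limproj_{n} \bP_x^*/\ker(\bP_x \to \bP_{x,n})$, and by the Lemma immediately preceding the statement each quotient is identified with $\bR_{x,n}$, which is a split unipotent $k$--group. The transition maps $\bR_{x,n+1} \to \bR_{x,n}$ are surjective with kernel contained in $\bP_{x,n+1/n}$, which is an iterated extension of the vector group $\mathrm{Lie}(\mathfrak{P}_x)\otimes_O k$ and hence split unipotent. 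So $\bP_x^*$ is an inverse limit of split unipotent $k$--groups along surjective morphisms with split unipotent kernels, i.e.\ $\bP_x^*$ is split pro-unipotent.

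\textbf{Step 2 (Uniqueness via the reductive quotient).} Let $\bU \subset \bP_x$ be an arbitrary split pro-unipotent closed normal subgroup. Its image $\bV$ in $\bP_x/\bP_x^* = \bM_x$ is a closed normal subgroup of $\bM_x$ which is itself split pro-unipotent, hence in particular a closed normal unipotent subgroup of the algebraic $k$--group $\bM_x$. Since $\bU$ is split pro-unipotent, it is an inverse limit along surjections of split connected unipotent groups, and hence connected. Its image $\bV$ is therefore a connected closed normal unipotent subgroup of $\bM_x$, which forces $\bV \subset \bM_x^{\circ}$. But $\bM_x^{\circ}$ is reductive, so it has no non-trivial connected normal unipotent subgroup, whence $\bV = 1$. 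Thus $\bU \subset \ker(\bP_x \to \bM_x) = \bP_x^*$.

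\textbf{Step 3 (Conclusion).} Applied to $\bU = \bP_x^*$, Step 2 shows $\bP_x^*$ contains every split pro-unipotent closed normal subgroup, so it is the maximal such subgroup and it is manifestly unique. The only substantial point in this argument is ensuring that a split pro-unipotent $k$--group is connected and that the image in $\bM_x$ remains pro-unipotent; both follow from the structure of split (pro-)unipotent groups as successive extensions by $\bG_a$, using that $k$ is perfect so that the unipotent radical $\bR_x$ of the special fibre is defined and split over $k$ by \cite[IV.2.3.9]{DG}.
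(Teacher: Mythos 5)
Your proof is correct and follows essentially the same route as the paper's: establish via the congruence filtration that $\bP_x^*$ is split pro-unipotent, then use that the image of any split pro-unipotent closed normal subgroup in $\bM_x$ is a connected normal unipotent subgroup of $\bM_x^{\circ}$, which must vanish because $\bM_x^{\circ}$ is reductive. Your version is slightly more explicit than the paper's (in particular about why the image in $\bM_x$ is connected, and why the ``split'' hypothesis and perfectness of $k$ are used), which is a welcome clarification rather than a departure.
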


\begin{proof}
Since
$$
\ker(\bP_x \to \bP_{x,1}) = \underset{n}{\limproj} \ker(\bP_{x,n} \to
\bP_{x,1})$$
is pro-unipotent, the above exact sequence
shows that $\bP_x^{*}$ is pro-unipotent. Let $\bU_x$ be a pro-unipotent
normal closed subgroup of $\bP_x$.  The image of
$\bU_x$ by the map
$\bP_x \to \bM_x$ is a normal unipotent connected $k$-subgroup.
 Since $\bM_x^{\circ}$ is reductive, its image is trivial.
Therefore $\bU_x \subset \bP^*_x$ which completes the proof.
\end{proof}

\subsection{Behaviour under a Galois  extension}

Just as does the whole theory, the construction of $P^*_x$
 has a very nice behaviour  with respect to
unramified extensions of $K$.
The behaviour under a given  tamely ramified
 finite Galois field extension
$L/K$  is subtle. Since such an extension is a tower of an
unramified extension and a totally ramified one,
we may concentrate on the case when $L/K$ is totally (tamely) ramified.
Then $L/K$ is cyclic of degree $e$ invertible in
$k=\overline{K}=\overline{L}$.
The Galois group $\Gamma= \Gal(L/K)$ acts on the
building ${\cal B}(\dG,L)$.
The Bruhat-Tits-Rousseau theorem (\cite[\S 5]{Ro}, see also \cite{Pr})
states that the natural map
$$
j: {\cal B}(\dG,K) \to {\cal B}(\dG,L)
$$
induces a bijection ${\cal B}(\dG,K) \simlgr {\cal B}(\dG,L)^\Gamma$.
For $z\in  {\cal B}(\dG,L)$, we denote by $Q_z$ the parahoric subgroup
of $\dG(L)$ and  by ${\mathfrak Q}_z$ the canonical group scheme
over $O_L$
attached to the
point  $z$.

For $\sigma \in \Gamma$, we have $\sigma( Q_z) = Q_{\sigma(z)}$.
Hence for the canonical group schemes over $O_L$ attached to $z$ and
$\sigma(z)$, there is a   natural cartesian square
$$
\begin{CD}
 {\mathfrak Q}_{\sigma(z)} @>{f_{\sigma, z}} >> {\mathfrak Q}_z \\
@VVV @VVV \\
\Spec(O_L) @>{ (\sigma^{-1})^*}>>  \Spec(O_L) .\\
\end{CD}
$$
Put $y=j(x) \in {\cal B}(\dG,L)^\Gamma$.  We then have an $O$-action of $\Gamma$ on the scheme ${\mathfrak Q}_y$. We note that

\begin{eqnarray}\label{fact434}
P_x=  \dG(K) \cap Q_y &=& \dG(L)^\Gamma \cap Q_y = Q_y^\Gamma.
\end{eqnarray}

\noindent As above we consider the groups $\bQ_{y,n}$ and their projective limit
$\bQ_y$. Since $k$ is the residue field of $O_L$, all $\bQ_{y,n}$
and $\bQ_y$ are $k$-groups. The action of $\Gamma$ on ${\mathfrak Q}_y$
induces its action on $\bQ_{y,n}$, hence on $\bM_y$ where $\bM_y$ stands for
the reductive $k$--group attached to $y$, and on their projective limit
$\bQ_y$.
By Lemma \ref{prosolv},
$\bQ_y^{*}$ is a characteristic $k$-subgroup of  $\bQ_y$, hence $\Gamma$
also acts  on the pro-algebraic $k$-group $\bQ^*_y$.
Our goal is to prove the following fact:

\begin{proposition}\label{compatibility}
There is a natural closed embedding $\bP_x \to \bQ_y$ and
we have $$
\bP_x^* = \bP_x \cap \bQ_y^{*}.
$$
This gives rise to an isomorphism $\bM_x \simlgr \bM_y^\Gamma$.

\end{proposition}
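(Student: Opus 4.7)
The plan is in three steps: construct the closed embedding $\bP_x\hookrightarrow\bQ_y$ (together with its factorization through $\bQ_y^\Gamma$); establish the equality $\bP_x^*=\bP_x\cap\bQ_y^*$; and deduce the isomorphism $\bM_x\simlgr \bM_y^\Gamma$. For the first step, the inclusion $P_x\subset Q_y$ in (\ref{fact434}) and the universal property of the canonical parahoric group scheme ${\mathfrak Q}_y$ yield an $O_L$-morphism ${\mathfrak P}_x\times_O O_L\to {\mathfrak Q}_y$, equivalently an $O$-morphism ${\mathfrak P}_x\to\prod_{O_L/O}{\mathfrak Q}_y$. Applying the relative Greenberg functor and invoking Lemma~\ref{Greenberg2} (which identifies $\uG(\prod_{O_L/O}{\mathfrak Q}_y)$ with $\bQ_y$, since the residue extension is trivial) produces the desired $k$-morphism $\bP_x\to\bQ_y$; closedness is verified on geometric points and on Lie algebras, where the tame-descent equality $(\mathrm{Lie}\,{\mathfrak Q}_y\otimes_{O_L}\overline{k})^\Gamma=\mathrm{Lie}\,{\mathfrak P}_x\otimes_O\overline{k}$ enters. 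The Galois action on $O_L/O$ induces an action of $\Gamma$ on $\bQ_y$ compatible with the action discussed in the text, and the image of $\bP_x$ in $\bQ_y$ is $\Gamma$-stable since $\bP_x$ arises from an $O$-scheme.

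For the equality $\bP_x^*=\bP_x\cap\bQ_y^*$, the inclusion $\supseteq$ is immediate from Lemma~\ref{prosolv}: the intersection is a normal subgroup of $\bP_x$ contained in the pro-unipotent $\bQ_y^*$, so is itself pro-unipotent, hence lies in $\bP_x^*$. For $\subseteq$, consider the composition $\bP_x^*\hookrightarrow\bP_x\to\bQ_y\to\bM_y$: its image is a pro-unipotent $\Gamma$-stable subgroup of $\bM_y$. Since $|\Gamma|$ is prime to the residue characteristic (tame ramification) and $k$ is perfect, the fixed-point scheme $\bM_y^\Gamma$ is smooth and its identity component is reductive (Richardson, \cite{Ri}); it therefore admits no nontrivial pro-unipotent subgroup, forcing the image of $\bP_x^*$ to be trivial. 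Hence $\bP_x^*\subseteq\ker(\bP_x\to\bM_y)=\bP_x\cap\bQ_y^*$.

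Given the equality, the induced morphism $\bM_x=\bP_x/\bP_x^*\to\bM_y$ is injective and, by the $\Gamma$-stability from step one, factors through $\bM_y^\Gamma$. For surjectivity onto $\bM_y^\Gamma$, take $\Gamma$-fixed points in the exact sequence $1\to\bQ_y^*\to\bQ_y\to\bM_y\to 1$; since $\bQ_y^*$ is split pro-unipotent and $|\Gamma|$ is invertible in $k$, the cohomology $H^1(\Gamma,\bQ_y^*)$ vanishes by d\'evissage from $H^1(\Gamma,\bG_{a,k})=0$. This gives a surjection $\bQ_y^\Gamma\twoheadrightarrow\bM_y^\Gamma$, which combined with the identification $\bP_x=\bQ_y^\Gamma$ (the scheme-theoretic upgrade of (\ref{fact434})) yields $\bM_x\twoheadrightarrow\bM_y^\Gamma$, completing the isomorphism. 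The chief obstacle is the scheme-level identification $\bP_x=\bQ_y^\Gamma$, rather than merely on geometric points; this is the heart of the Bruhat-Tits-Rousseau-Prasad tame-descent theorem, and it is what justifies both the closed-immersion claim in step one and the final surjection.
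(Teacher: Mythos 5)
Your construction of the closed embedding goes in the same direction as the paper's, but the paper's argument is cleaner: it forms the kernel $\mathfrak N$ of ${\mathfrak P}_x \to \prod_{O_L/O}{\mathfrak Q}_y$, observes $\mathfrak N_K$ is trivial, and then kills $\bN={\underline G}(\mathfrak N)$ by Lemma~\ref{Greenberg1}(2), so that $\bP_x\hookrightarrow\bJ_x\simeq\bQ_y$ (the last isomorphism from Lemma~\ref{Greenberg2}) is automatic. Your claim that closedness can be ``verified on geometric points and on Lie algebras'' via a tame-descent equality is asserted, not proved, and is not how the paper proceeds.

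The more serious problem is your argument for $\bP_x^*\subseteq\bP_x\cap\bQ_y^*$. You write that $\bM_y^\Gamma$, having reductive identity component, ``admits no nontrivial pro-unipotent subgroup.'' That is false: a reductive group of positive semisimple rank has plenty of split unipotent subgroups (root groups, unipotent radicals of parabolics). What is true, and what is needed, is that it has no nontrivial \emph{normal} split unipotent subgroup. To get normality of the image of $\bP_x^*$ inside $\bM_y^\Gamma$ you need $\bP_x\to\bM_y^\Gamma$ to be \emph{surjective}, because $\bP_x^*$ is normal in $\bP_x$ but not a priori normal in a larger ambient group. You defer surjectivity to your third step, so the second step is circular. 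The paper gets the logical order right: it first proves surjectivity of $\bP_x\to\bM_y^\Gamma$ using Lemma~\ref{suite} and the point-level identity~(\ref{fact434}) applied over every finite extension $k'/k$ (and then uses smoothness of $\bM_y^\Gamma$ to upgrade surjectivity on points to surjectivity of $k$-group schemes), and only then deduces that the normal pro-unipotent image of $\bP_x^*$ in $\bM_y^\Gamma$ must be trivial.

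Finally, you rest the surjectivity step on a ``scheme-level identification $\bP_x=\bQ_y^\Gamma$'' which you call the heart of the Bruhat--Tits--Rousseau--Prasad theorem. That theorem identifies the buildings ${\cal B}(G,K)\simlgr{\cal B}(G,L)^\Gamma$ and gives the equality $P_x=Q_y^\Gamma$ of stabilizers inside $G(K)$ --- this is exactly~(\ref{fact434}) --- but it does not, by itself, identify ${\mathfrak P}_x$ with the $\Gamma$-fixed point scheme of $\prod_{O_L/O}{\mathfrak Q}_y$, nor $\bP_x$ with $\bQ_y^\Gamma$ as pro-algebraic $k$-groups. The paper deliberately avoids this delicate scheme-level statement by working only with $k'$-points and invoking smoothness; your route would require proving precisely the statement you flag as the ``chief obstacle,'' so as written the proof is not complete.
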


By taking $k$--points we get the following wished compatibility, namely.

\begin{corollary}\label{app-cor} We have
$$
P_x^{*} \simlgr P_x \cap Q_y^{*}.
$$
\end{corollary}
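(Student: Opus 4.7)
The plan is to deduce the Corollary directly from Proposition \ref{compatibility} by applying the functor of $k$-points to the scheme-theoretic identity $\bP_x^{*} = \bP_x \cap \bQ_y^{*}$.

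First, I would recall the identifications already established in the preceding material. By construction of the Greenberg functor, $\bP_x(k) = \mathfrak{P}_x(O) = P_x$ and $\bQ_y(k) = \mathfrak{Q}_y(O_L) = Q_y$. The definition of $P_x^{*}$ as a $k$-analytic object is matched with the algebraic one via the equality $P_x^{*} = \bP_x^{*}(k)$ stated in \S\ref{congruence}, and likewise $Q_y^{*} = \bQ_y^{*}(k)$. The factorization $P_x = Q_y^{\Gamma}$ of (\ref{fact434}), combined with the closed embedding $\bP_x \hookrightarrow \bQ_y$ provided by Proposition \ref{compatibility}, ensures that $P_x$ embeds in $Q_y$ in a way compatible with the passage from $\bP_x$ to $\bQ_y$.

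Next, I would note that for any closed subgroup schemes $\bA, \bB$ of a $k$-group scheme $\bC$, the intersection $\bA \cap \bB$ represents the fiber product $\bA \times_{\bC} \bB$; evaluating a representable fiber product on a $k$-algebra $\Lambda$ gives the intersection of the sets of $\Lambda$-points inside $\bC(\Lambda)$. Applying this with $\bA = \bP_x$, $\bB = \bQ_y^{*}$, and $\bC = \bQ_y$, and taking $\Lambda = k$, yields
\[
(\bP_x \cap \bQ_y^{*})(k) = \bP_x(k) \cap \bQ_y^{*}(k) = P_x \cap Q_y^{*}.
\]
Combining this with $\bP_x^{*}(k) = P_x^{*}$ and the scheme-theoretic equality $\bP_x^{*} = \bP_x \cap \bQ_y^{*}$ from Proposition \ref{compatibility} gives $P_x^{*} = P_x \cap Q_y^{*}$, which is exactly the claim.

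There is no real obstacle here, since the whole content is packed into Proposition \ref{compatibility}; the only thing to be careful about is the compatibility of the two different meanings of $P_x^{*}$ (defined analytically as a subgroup of $\mathfrak{P}_x(O)$, and algebraically as the $k$-points of $\bP_x^{*}$), but this has already been recorded as an equality in \S\ref{congruence}, so the taking of $k$-points is a formal step.
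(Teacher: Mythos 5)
Your proof is correct and matches the paper's own (one-line) derivation: the paper states exactly that the Corollary follows ``by taking $k$-points'' of the scheme-theoretic identity $\bP_x^{*} = \bP_x \cap \bQ_y^{*}$ in Proposition \ref{compatibility}. Your additional remarks about the identifications $\bP_x(k)=P_x$, $P_x^* = \bP_x^*(k)$, and the compatibility of scheme-theoretic intersection with $k$-points are all correct and simply spell out what the paper leaves implicit.
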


Consider  the Weil restriction ${\mathfrak J}_x:=
\Pi_{O_L/O}\bigl( {\mathfrak Q}_y \bigr)$ and recall  it  is
a smooth $O$-scheme \cite[\S 2.5]{Yu}.
Let $\mathfrak N$ be the kernel of
the natural map ${\mathfrak P}_x \to {\mathfrak J}_x$, its generic fiber
is trivial. As above, applying the Greenberg functors
to the $O$-schemes ${\mathfrak J}_x$ and ${\mathfrak N}$ we get $k$-groups
$\bJ_{x,n},\ \bJ_x$ and $\bN_n,\ \bN$.

Since the Greenberg functor is left exact, we get
an exact sequence
$$
1 \to \bN \to  \bP_{x} \to \bJ_x.
$$
Since ${\mathfrak N}_K=1$,  we have $\bN=1$
according to Lemma   \ref{Greenberg1} (2).
Hence we may view
$\bP_x$ as a closed subgroup of
$\bJ_x$. 
But according to Lemma \ref{Greenberg2},  $\bJ_{x,n}$ is
nothing but $\bQ_{y,n}$. This implies $\bJ_x$ is isomorphic
in a natural way to $\bQ_y$. Thus we have constructed a natural closed
embedding $\bP_x\to \bQ_y$.

Define the $k$-subgroups $\bQ_y^\Gamma:= \underset{n}{\limproj}\,
\bQ_{y,n}^\Gamma$
and $(\bQ_y^*)^\Gamma= { \bQ_y^\Gamma \cap \bQ_y^*}$ of $\bQ_y$ and $\bQ^*_y$
respectively.

\begin{lemma} \label{suite}
{\rm (1)}  If $k'/k$ is a finite extension of fields, the projective system
$\bigl( \bQ_{y,n}^\Gamma(k') \bigr)_{n \geq 1}$
has surjective transitions maps. 
Therefore the projective system
of $k$-groups
$\bigl( \bQ_{y,n}^\Gamma \bigr)_{n \geq 1}$
has surjective transitions maps.

\smallskip

\noindent
{\rm (2)} If $k'/k$ is a field finite extension,  we have an exact sequence
$$
1 \to (\bQ_y^*)^\Gamma(k')  \to \bQ_y^\Gamma(k')
 \to  \bM_y^\Gamma(k')\to 1;
$$ 
hence the sequence of the pro-algebraic $k$-groups
$$
1 \to (\bQ_y^*)^\Gamma  \to\bQ_y^\Gamma \to
 \bM_y^\Gamma \to 1
$$ 
is also exact.

\smallskip

\noindent
{\rm (3)} The algebraic $k$--group $\bM_y^\Gamma$ is smooth and 
 its connected component of the identity is reductive.

\end{lemma}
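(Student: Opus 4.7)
The master observation driving all three parts is that the cyclic Galois group $\Gamma = \Gal(L/K)$ has order $e$ invertible in $k$, so $\Gamma$ is linearly reductive: for any $k'$-vector space $V$ carrying a $\Gamma$-action (with $k'/k$ any field extension), the cohomology $H^i(\Gamma, V)$ vanishes for $i \geq 1$, since the order of $\Gamma$ annihilates these groups while acting invertibly on $V$.

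For part (1), I would start with the short exact sequence of smooth $k$-groups
\[
1 \to \bQ_{y,n+1/n} \to \bQ_{y,n+1} \to \bQ_{y,n} \to 1
\]
recalled in \S\ref{congruence}, whose kernel is a successive extension of vector groups. Passing to $k'$-points remains exact because $H^1(k', \bG_a) = 0$, and the resulting sequence of $\Gamma$-modules yields, via the long exact cohomology sequence, the desired surjectivity of $\bQ_{y,n+1}(k')^\Gamma$ onto $\bQ_{y,n}(k')^\Gamma$, the obstruction $H^1(\Gamma, \bQ_{y,n+1/n}(k'))$ being zero by the master observation applied iteratively along the filtration by vector groups. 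Specialising at $k' = \bar k$ and invoking smoothness of $\bQ_{y,n}^\Gamma$ (a consequence of $\Gamma$ being linearly reductive acting on the smooth scheme $\bQ_{y,n}$) would then upgrade this to surjectivity of morphisms of $k$-groups.

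For part (2), I would apply the exact sequence $1 \to \bQ_y^* \to \bQ_y \to \bM_y \to 1$ from \S\ref{congruence} and the long exact sequence in $\Gamma$-cohomology, so that the issue reduces to showing $H^1\bigl(\Gamma, \bQ_y^*(k')\bigr) = 0$. I would write $\bQ_y^* = \limproj_n \bR_{y,n}$ using the lemma of \S\ref{congruence} identifying $\bQ_y^*/\ker(\bQ_y \to \bQ_{y,n})$ with the split unipotent $k$-group $\bR_{y,n}$. At each finite level, $\bR_{y,n}(k')$ is a successive extension of $k'$-vector spaces with $\Gamma$-action, so $H^1(\Gamma, \bR_{y,n}(k')) = 0$ by the master observation; and part (1), applied also to the kernels of the transition maps, provides the Mittag--Leffler condition on the system $\bigl(\bR_{y,n}(k')^\Gamma\bigr)_{n \geq 1}$ required to commute $H^1(\Gamma, -)$ with the inverse limit. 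The $k$-group version then follows by taking $k' = \bar k$.

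For part (3), smoothness of $\bM_y^\Gamma$ will follow from the general theorem that the scheme of fixed points of a linearly reductive group acting on a smooth $k$-scheme is smooth, applied here to the action of $\Gamma$ on the smooth $k$-group $\bM_y$. Reductivity of the identity component reduces, via $(\bM_y^\Gamma)^\circ = ((\bM_y^\circ)^\Gamma)^\circ$, to reductivity of the identity component of $(\bM_y^\circ)^\Gamma$; since $\bM_y^\circ$ is reductive (as recorded in \S\ref{congruence}) and $|\Gamma|$ is invertible in $k$, this is precisely Richardson's theorem \cite{Ri} already quoted earlier in the paper. The main obstacle is the Mittag--Leffler step in part (2): interchanging $H^1(\Gamma,-)$ with the pro-unipotent inverse limit requires surjectivity of $\Gamma$-invariants of transition maps at each finite level, which is exactly why part (1) must be established first.
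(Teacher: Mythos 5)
Your parts (1) and (3) are essentially the paper's argument: part (1) uses the same finite-level exact sequence and the same vanishing $H^1(\Gamma,\bQ_{y,n+1/n}(k'))=0$ (the paper reduces to $k'=k$ by insensitivity of Bruhat--Tits theory to unramified extensions, you pass directly to $k'$-points using $H^1(k',\bG_a)=0$, which comes to the same thing); and part (3) invokes the same Grothendieck smoothness-of-fixed-points result and the same theorem of Richardson.

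For part (2), however, you take a genuinely different route from the paper, and it is worth contrasting the two. The paper factors $\bQ_y \to \bQ_{y,1} \to \bM_y$ and shows each arrow is surjective on $\Gamma$-fixed $k'$-points: the first by taking the limit over the surjective system of part (1), the second by a single finite-level cohomology computation with the split unipotent kernel $\bR_{y,1}$. This avoids any commutation of $H^1(\Gamma,-)$ with an inverse limit. Your route instead reduces to $H^1(\Gamma,\bQ_y^*(k'))=1$ and argues by a successive-approximation (Mittag--Leffler type) argument over the tower $\bQ_y^*=\limproj_n\bR_{y,n}$. This does work, but two points need to be stated more carefully than you did. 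First, $\bQ_y^*(k')$ is not abelian, so this is not the Milnor $\limproj^1$-sequence: one really has to run the inductive lift of coboundary data layer by layer, using at each step that two cochains bounding the same cocycle differ by a $\Gamma$-fixed element, together with surjectivity of $\bR_{y,n+1}(k')^\Gamma\to\bR_{y,n}(k')^\Gamma$. Second, that last surjectivity does not literally follow from the statement of part (1), which concerns the system $\bigl(\bQ_{y,n}^\Gamma(k')\bigr)$, not $\bigl(\bR_{y,n}(k')^\Gamma\bigr)$; you need to note that the kernel of $\bR_{y,n+1}\to\bR_{y,n}$ is again $\bQ_{y,n+1/n}$ (it is unipotent, hence contained in the unipotent radical) and rerun the argument of (1) for the $\bR$-system. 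With those two clarifications your proof is correct; the paper's is shorter because the composition $\bQ_y\to\bQ_{y,1}\to\bM_y$ quarantines the limit entirely inside an application of (1).
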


\begin{proof}

\smallskip

\noindent(1) Since Bruhat-Tits theory is insensitive to finite unramified extensions, 
 we may assume without loss of generality that $k=k'$. Since
 $\bQ_{y, n+1/n}$ is a $k$-split unipotent group, we have an
 exact sequence
$$
1 \to \bQ_{y, n+1/n}(k)  \to  \bQ_{y,n+1}(k) \to \bQ_{y,n}(k) \to 1.
$$
It gives rise to the exact sequence of pointed sets
$$
1 \to  \bQ_{y,n+1/n}(k)^\Gamma  \to  \bQ_{y,n+1}(k)^\Gamma \to
\bQ_{y,n}(k)^\Gamma
 \to H^1\big( \Gamma,  \bQ_{y,n+1/n}(k)\big).
$$
Since $\bQ_{y,n+1/n}(k)$ admits a characteristic central composition serie
in $k$-vector spaces  and the order
of $\Gamma$ is invertible in $k$, the right hand side is trivial.
A fortiori, the system $(\bQ_{y,n}^{\Gamma})$ of $k$--groups is surjective
(because $\bQ_{y,n}^{\Gamma}(k)=\bQ_{y,n}(k)^{\Gamma}$).

\smallskip

\noindent(2) By part (1), the map $ \bQ_y^\Gamma(k) \to
(\bQ_{y,1})^\Gamma(k)$ is surjective.
The same argument as in (1) shows that $ (\bQ_{y, 1})^\Gamma (k)\to
 \bM_y^\Gamma(k)$ is also surjective. By taking the composition of these maps we conclude
the map $ \bQ_y^\Gamma(k) \to  \bM_y^\Gamma(k)$ is surjective
 whence the desired exactness of both sequences.

\smallskip

\noindent (3)
The group  $\Gamma$ may be viewed as a finite abelian constant group scheme
whose order  is invertible in $k$. Hence $\Gamma$
is also a (smooth) $k$-group of multiplicative type.
Since $\bM_y$ is affine and smooth,
 Grothendieck's theorem of smoothness of centralizers
\cite[XI, 5.3]{SGA3} shows that $\bM_y^\Gamma$ is smooth.
Its connected component of the identity is reductive by
a result of Richardson \cite[prop. 10.1.5]{Ri}.
\end{proof}

We can now proceed to the proof of
Proposition \ref{compatibility}.

\begin{proof}
We have  to show that our  closed embedding $\bP_x \to \bQ_y$ which we
constructed above induces an isomorphism  $
\bP_x^* \simlgr \bP_x \cap \bQ_y^{*}.$ Since $\bP_x\cap \bQ_y^*$
is a normal closed split pro-unipotent subgroup of $\bP_x$
it is contained in $\bP^*_x$. Hence it remains only
to show that $\bP^*_x\subset \bQ^*_y$.


We now recall from (\ref{fact434}) that $P_x=Q_y^\Gamma$ and $\bQ_y^\Gamma(k)=\bQ_y(k)^{\Gamma}=Q_y^{\Gamma}$.
By Lemma~\ref{suite},  $\bQ_y^\Gamma(k)$ 
projects onto $\bM_y^\Gamma(k)$,
 so the composite map 
$$
P_x=\bP_x(k) \to \bQ_y^\Gamma(k) \to  \bM_y^\Gamma(k)
$$ is surjective.
Since this is true  for all finite extensions of $k$,
the homomorphism of $k$-algebraic groups  $\bP_x \to \bM_y^\Gamma$ is surjective.
But  $({\bM_y^\Gamma})^{\circ}$ is reductive, hence this map is trivial on
the pro-unipotent radical $\bP_x^*$. We get then a surjective map
 $\bM_x \to  \bM_y^\Gamma$ 
and also a homomorphism $\bP_x^* \to (\bQ_y^*)^\Gamma\subset \bQ^*_y$
as required. 
\end{proof}

\bigskip

\bigskip





\begin{thebibliography}{ABCD}


\bibitem[Als]{Als}
B.~Allison, {\it Some isomorphism invariants of Lie tori},
Jour. Lie Theory (to appear).

\bibitem[AABGP]{AABGP}
B. Allison, S.~Azam, S.~Berman, Y.~Gao and A.~Pianzola,
\emph{Extended affine Lie algebras and their root systems},
Mem.~Amer.~Math.~Soc. {\bf 126} \#603 (1997).

\bibitem[ABFP]{ABFP}
B.~Allison, S.~Berman, J.~Faulkner, A.~Pianzola,
\emph{Multiloop realization of extended affine Lie algebras and Lie tori},
Trans. Amer. Math. Soc.  \textbf{361} (2009), 4807--4842.




\bibitem[B]{B} L. B\'egueri, {\it Dualit\'e sur un corps local
\`a corps r\'esiduel alg\'ebriquement clos},
M\'emoires de la SMF {\bf 4} (1980), 1-121.


\bibitem[Bor]{Bor} A. Borel, {\it  Linear Algebraic Groups
(Second enlarged edition)},
Graduate text in Mathematics {\bf 126} (1991), Springer.

\bibitem[BM]{BM} A.  Borel, G.D. Mostow, {\it
On semi-simple automorphisms of Lie algebras}, Ann. Math. {\bf 61}
(1955), 389-405.



\bibitem[BT1]{BT}  A. Borel, J. Tits,   {\it Groupes r\'eductifs},
    Inst. Hautes \'Etudes Sci. Publ. Math. {\bf  27} (1965),
 55--150.



\bibitem[BT2]{BT73}  A. Borel, J. Tits,   {\it Homomorphismes
``abstraits'' de groupes alg\'ebriques simples},
    Annals of Mathematics {\bf 97 } (1973), 499--571.





\bibitem[BLR]{BLR} S. Bosch, W.  L\"utkebohmert, M. Raynaud,
{\it N\'eron models},
Ergebnisse der Mathematik und ihrer Grenzgebiete {\bf  21} (1990),
Springer-Verlag.



\bibitem[Bbk1]{Bbk} N. Bourbaki, {\it  Commutative algebra}, Ch 1--7. Springer.

\bibitem[Bbk2]{Bbk1} N. Bourbaki, {\it Groupes et alg\`ebres de Lie}, Ch. 4,5 et 6, Masson.






\bibitem[BT3]{BT1} F. Bruhat, J. Tits,
{\it Groupes r\'eductifs sur un corps local.
 I. Donn\'ees radicielles valu\'ees},
 Inst. Hautes Etudes Sci. Publ. Math.  {\bf  41}  (1972), 5--251.




\bibitem[BT4]{BT2} F. Bruhat, J. Tits,
{\it Groupes r\'eductifs sur un corps local.
 II. Sch\'emas en groupes. Existence d'une donn\'ee radicielle valu\'ee},
 Inst. Hautes Etudes Sci. Publ. Math.  {\bf  60}  (1984), 197--376.


\bibitem[BT5]{BT3} F. Bruhat, J. Tits,
{\it Groupes r\'eductifs sur un corps local.
 III. Compl\'ements et applications \`a la cohomologie galoisienne},
J. Fac. Sci. Univ. Tokyo  {\bf 34} (1987), 671--698.


\bibitem[CGP]{CGP} V. Chernousov, P. Gille, A. Pianzola,
{\it Torsors over the punctured affine line}, American Journal of Mathematics 
{\bf 134} (2012), no 6, 1541--1583.

\bibitem[CTO]{CTO} J.--L. Colliot--Th\'el\`ene, M. Ojanguren, {\it Espaces
principaux homog\`enes localement triviaux},
I.H.\'E.S. Publ. Math. {\bf 75} (1992), 97--122.












\bibitem[De]{De} C. Demarche, {\it M\'ethodes cohomologiques pour l'\'etude
des points rationnels sur les espaces homog\`enes}, th\`ese (Orsay, 2009),
author's URL.



\bibitem[DG]{DG} M.  Demazure, P. Gabriel,
{\em Groupes alg\'ebriques}, North-Holland (1970).




\bibitem[G]{G} P. Gille, {\rm Le probl\`eme de Kneser-Tits}, 
expos\'e Bourbaki  983, Ast\'erisque {\bf 326}  (2009), 39--81.





\bibitem[GP1]{GP1}  P. Gille,  and  A. Pianzola,  {\it Galois cohomology
and forms of algebras over Laurent polynomial rings}, Math. Annalen
{\bf 338} (2007) 497--543.

\bibitem[GP2]{GP2} P. Gille and A. Pianzola,
{\it Isotriviality and \'etale cohomology of Laurent polynomial
rings}, Jour. Pure Applied Algebra, {\bf 212} 780--800 (2008).


\bibitem[GP3]{GP3} P. Gille and A. Pianzola,
{\it Torsors, Reductive group Schemes and Extended Affine Lie Algebras},  
Memoirs of AMS {\bf 1063} (2013).



\bibitem[Gir]{Gir} J.  Giraud,
{\em Cohomologie non-ab\'elienne}, Springer (1970).

\bibitem[Gb]{Gb} M. Greenberg,  {\it Schemata over local rings},
 Annals of Math.
{\bf 73} (1961), 624--648.


\bibitem[Gr]{FGA} A. Grothendieck,
{\it Techniques de descente et th\'eor\`emes d'existence en
g\'eom\'etrie
 alg\'ebrique. IV. Les sch\'emas de Hilbert},
S\'eminaire Bourbaki, Vol. 6,  Exp. No. 221, 249--276,
Soc. Math. France, Paris, 1995.

\bibitem[EGA IV]{EGAIV} A. Grothendieck (avec la collaboration de J.
Dieudonn\'e), {\it El\'ements de G\'eom\'etrie Alg\'ebrique IV},
Publications math\'ematiques de l'I.H.\'E.S. no 20, 24, 28 and 32
(1964 -- 1967).




\bibitem[H]{H} G. Harder, {\it  Halbeinfache Gruppenschemata
\"uber vollst\"andigen Kurven},
  Invent. Math.  {\bf 6} (1968), 107--149.





\bibitem [Hu]{[Hu]} J. Humphreys, \emph{Linear algebraic groups}, Springer-Verlag,
1975.



\bibitem[I]{I} L. Illusie, {\it Complexe de de Rham-Witt et
cohomologie cristalline}, Annales Scientifiques de l'Ecole normale
sup\'erieure  {\bf 12} (1979), 501--561.

\bibitem[Kac]{Kac} V.~Kac,
\emph{Infinite dimensional Lie algebras}, third edition,
Cambridge University Press, Cambridge, 1990.






\bibitem[Kmr]{Kmr} S. Kumar, {\it Kac-Moody groups, their flag varieties and representation theory},  Springer Verlag (2002)


\bibitem[Lam]{Lam} T.-Y. Lam,
{\em Serre's problem on projective modules}, second edition (2007),  Springer.

\bibitem[LS]{LS} S. Lang, J.-P. Serre, {\it Sur les rev\^etements non ramifi\'es des
vari\'et\'es alg\'ebriques}, American Journal of Mathematics {\bf 79} (1957), 319--330.


\bibitem[M1]{M1} J. S.  Milne,
{\em \'Etale Cohomology}, Princeton University Press.


\bibitem[M2]{M2} J. S.  Milne,
{\em Arithmetic duality theorem}, second edition (2004).

\bibitem[MP]{MP} R.V.~Moody, A.~Pianzola,
\emph{Lie algebras with
triangular decomposition},
John Wiley, New York, 1995.


\bibitem[Mt]{Mt} G. D. Mostow, {\it
Fully reducible subgroups of algebraic groups}, Amer. J. Math. {\bf
78} (1956), 200--221.


\bibitem[N1]{N1} E.~Neher, \emph{Lie tori},
C.R. Math. Acad. Sci. Soc. R. Can.,
{\bf 26} (2004), pp.~84--89.

\bibitem[N2]{N2} E.~Neher,  \emph{Extended affine Lie algebras},
C.R. Math. Acad. Sci. Soc. R. Can.,
{\bf 26} (2004), pp.~90--96.





\bibitem[P1]{P1} A. Pianzola, {\em Locally trivial principal homogeneous
spaces and conjugacy theorems for Lie algebras}, J. Algebra  {\bf 275}
(2004),  no. 2, 600--614.

\bibitem[P2]{P2} A. Pianzola, {\it Vanishing of $H^1$ for
Dedekind rings and applications to loop
algebras}, C. R. Acad. Sci. Paris, Ser. I {\bf 340} (2005), 633-638.

\bibitem[PK]{PK} D.H.~Peterson, V.~Kac,
\emph{Infinite flag varieties and conjugacy theorems},
Proc. Natl. Acad. Sci. USA \textbf{ 80} (1983), 1778--1782.






\bibitem[Pr]{Pr} G. Prasad, {\it  Galois-fixed points
in the Bruhat-Tits building of a reductive group},
Bull. Soc. Math. France  {\bf 129}, (2001),   169--174.





\bibitem[Ra]{Ra}  M. Raynaud,
{\it Anneaux locaux hens\'eliens}, Lecture Notes in Math. 169,
Springer (1971).


\bibitem[Ri]{Ri} R. W. Richardson,  {\it On orbits of algebraic
groups and Lie groups},
  Bull. Austral. Math. Soc.  {\bf 25}  (1982),  1--28.


\bibitem[Ro]{Ro} G. Rousseau, {\it Immeubles des groupes
r\'eductifs sur les corps locaux},
Th\`ese, Universit\'e de Paris-Sud (1977).




\bibitem[SGA1]{SGA1} {\it S\'eminaire de G\'eom\'etrie alg\'ebrique de
l'I.H.E.S.,  Rev\^etements \'etales et groupe fondamental, dirig\'e
par  A. Grothendieck},  Lecture Notes in Math. 224. Springer (1971).

\bibitem[SGA3]{SGA3} {\it S\'eminaire de G\'eom\'etrie alg\'ebrique de
l'I.H.E.S., 1963-1964, sch\'emas en groupes, dirig\'e par M.
Demazure et A. Grothendieck},  Lecture Notes in Math. 151-153.
Springer (1970).





\bibitem[Se1]{Se1} J.-P.~Serre, {\it Galois Cohomology},
Springer, 1997.

 \bibitem[Se2]{Se2}
 J.-P. Serre, \emph{Local fields}, Springer-Verlag, New York, 1979.






\bibitem[St]{[St75]} R. Steinberg, {\it Torsion in reductive groups},
Advances in Mathematics {\bf 15} (1975), 63--92.

\bibitem[Sz]{Sz} T. Szamuely, \emph{Galois Groups and Fundamental Groups}, 
Cambridge Studies in Advanced Mathematics, vol. 117, Cambridge University Press, 2009. 



\bibitem[Ti]{T2} J. Tits, {\it Reductive groups over local fields},
Proceedings of the Corvallis conference on
$L$-functions etc., Proc. Symp. Pure Math. {\bf 33} (1979), part 1, 29--69.







\bibitem[Y1]{Y2}
Y.~Yoshii, \emph{Lie tori---A simple characterization of extended
affine Lie algebras}, Publ. Res. Inst. Math. Sci. {\bf 42} (2006),
739--762.


\bibitem[Y2]{Y3}
Y.~Yoshii, \emph{Root systems extended by an abelian group and
their Lie algebras}, J. Lie Theory {\bf 14}
(2004), pp.~371--374.






\bibitem[Yu]{Yu} J.-K.  Yu, {\it Smooth models associated to
concave functions in Bruhat-Tits theory}, preprint (2002).



\end{thebibliography}
\end{document}